\documentclass{agn_article}
\usepackage{agn_all}

\usepackage[hidelinks]{hyperref}
\usepackage{framed}
\usepackage{tabularx}
\usepackage{tabu}
\usepackage{bigdelim}
\usepackage{rotating}
\usepackage{accents}
\usepackage{relsize}
\usepackage{multirow}
\usepackage{centernot}

\usetikzlibrary{decorations.pathreplacing}
\newcolumntype{L}[1]{>{\raggedright\arraybackslash}p{#1}}
\newcolumntype{C}[1]{>{\centering\arraybackslash}p{#1}}
\newcolumntype{R}[1]{>{\raggedleft\arraybackslash}p{#1}}

\newcommand{\citet}[2][]{\citeauthor{#2} \cite[#1]{#2}}
\newcommand{\sort}[1]{{#1}^\downarrow}
\newcommand{\Rsort}{\R^n_\downarrow}
\newcommand{\Msort}{M_\downarrow}

\newcommand{\lambdamed}{\lambda_{\mathrm{med}}}

\newcommand{\olambdamax}{\overline\lambda_{\mathrm{max}}}
\newcommand{\olambdamed}{\overline\lambda_{\mathrm{med}}}
\newcommand{\olambdamin}{\overline\lambda_{\mathrm{min}}}

\newcommand{\fpot}{g}
\newcommand{\kpow}{^{(k)}}
\renewcommand{\Symp}{\Sym^{++}}

\begin{document}
\title{Monotonicity of isotropic tensor functions on the set of\\ symmetric matrices: completing Rodney Hill's generalization of\\ the Chandler Davis convexity theorem}
\date{\today}

\defineaffiliation{insa}{
	 LGCIE, INSA-Lyon, Universit\'e de Lyon, 20 avenue Albert Einstein, 69621, Villeurbanne cedex, France
}
\defineauthor{agostino}{Marco~Valerio~d'Agostino}{insa}{marco-valerio.dagostino@insa-lyon.fr}

\knownauthors[vossd]{vossd,martin,ghiba,agostino,neff}

\maketitle
\begin{center}
\vspace{-0.8cm}
\noindent \enquote{...[an] elementary but important theorem.}
\medskip
 
\noindent Rodney Hill, Constitutive inequalities for isotropic elastic solids under finite strain,\\Proceedings of the Royal Society 1970, p.466 (b).
\end{center}

\bigskip
\begin{abstract}
	\noindent
	Motivated by classical constitutive inequalities in isotropic nonlinear elasticity theory, we investigate the monotonicity of isotropic tensor functions of the form
	\[
		\Sigma_f\colon\mathrm{Sym}(n)\to\mathrm{Sym}(n)\,,\quad \Sigma_f(Q^T\mathrm{diag}(\lambda_1,\dotsc,\lambda_n)\. Q) = Q^T\mathrm{diag}(f(\lambda_1,\dotsc,\lambda_n))\. Q \quad\forall\;Q\in\mathrm{O}(n)
	\]
	with a vector function $f=(f_1,\dotsc,f_n)\colon\mathbb{R}^n\to\mathbb{R}^n$ which is symmetric, i.e.\ satisfies
	\[
		f_i(\lambda_{\pi(1)},\dotsc,\lambda_{\pi(n)}) = f_{\pi(i)}(\lambda_1,\dotsc,\lambda_n)
	\]
	for any permutation $\pi\colon\{1,\dotsc,n\}\to\{1,\dotsc,n\}$, where $\mathrm{Sym}(n)$ denotes the space of symmetric $n\times n$ matrices, $\mathrm{O}n$ is the orthogonal group and $\mathrm{diag}(\lambda_1,\dotsc,\lambda_n)$ is the diagonal matrix with diagonal entries $\lambda_1,\dotsc,\lambda_n\in\mathbb{R}$. We prove that vector-monotonicity of $f$ on $\mathbb{R}^n$ is equivalent to matrix-monotonicity of the induced isotropic tensor function $\Sigma_f$ on $\mathrm{Sym}(n)$. Our results generalize the Chandler Davis theorem for convex scalar isotropic functions and are obtained independently of Hill's original proof of this equivalence. We also discuss simple invertibility conditions for isotropic matrix functions. We conclude by showing that injectivity of the Cauchy stress $V\mapsto\sigma(V)$, continuous differentiability,  and positive definiteness of $\mathrm{sym}\,\mathrm D\sigma(\id)$ in the natural state imply the strong Baker-Ericksen inequalities.
\end{abstract}
{\textbf{Key words:} nonlinear elasticity, finite isotropic elasticity, constitutive inequalities, stress tensors, Cauchy stress, isotropic tensor functions, monotonicity, convexity, doubly stochastic matrices, constitutive law, hyperelasticity, Cauchy-elasticity, Chandler Davis convexity, Baker-Ericksen inequalities, invertibility}
\\[.65em]
\noindent {\bf AMS 2010 subject classification: 74B20, 74A20, 74A10}\\[1.4em]

\clearpage

{\parskip=-0.4mm\tableofcontents}
%
%
%
%
\section{Introduction}

Isotropic functions of symmetric matrices play a central role in nonlinear elasticity, matrix analysis, and invariant theory \cite{silhavy2013mechanics,mihai2017characterize,anssari2023large}. 
Their structure is governed by the spectral theorem: every symmetric matrix can be diagonalized by an orthogonal transformation, 
and isotropy implies that such functions depend only on the eigenvalues of their argument. 
This fundamental observation has far-reaching consequences and interesting connections to primary matrix functions \cite{martin2015some}.

A fundamental result in this context is the convexity theorem of Chandler Davis \cite{davis1957all}, which states that 
an isotropic scalar-valued function defined on the space $\Symn$ of symmetric matrices is convex if and only if 
its representation in terms of eigenvalues is convex as a symmetric function on $\R^n$. 
An independent proof and application were later given by Ball \cite{ball1976convexity} and the theorem has become a standard tool in nonlinear elasticity and convex matrix analysis.

In the differentiable case, convexity of a scalar-valued function is equivalent to monotonicity of its gradient. 
Thus, in the potential case, i.e.\ when a tensor-valued function arises as the derivative of a scalar-valued isotropic potential, the Davis theorem implies an equivalence between:
\begin{itemize}
	\item monotonicity of the corresponding isotropic tensor function $\D W$ on $\Sym(n)$ where $W$ is isotropic, and
	\item monotonicity of the gradient field $\grad\fpot$ in eigenvalue variables $\lambda\in\R^n$ where $\fpot$ denotes the eigenvalue representation of $W$.
\end{itemize}

However, many important constitutive relations in nonlinear elasticity are not gradients of scalar-valued potentials \cite{johnson1994large}, the most prominent example being the Cauchy stress tensor itself. 
In such non-potential cases, the equivalence between eigenvalue-based monotonicity and full tensorial monotonicity 
is far from obvious. Indeed, it is natural to suspect that this implication may fail outside the gradient framework.

In a remarkable paper, Rodney Hill \cite{hill1970constitutive} observed that the equivalence should nevertheless remain true for arbitrary isotropic tensor functions and presented what he described as an ``elementary but important theorem'' reducing matrix-monotonicity to vector-monotonicity. Hill's result may therefore be regarded as a first analogue of the Chandler Davis convexity theorem. His proof is very condensed and the ordering step is not made explicit. 
Also, in \cite[Appendix 1 Convex Functions]{ogden1997non} Ogden interprets Hill's proof for vector-monotonicity implies matrix-monotonicity for $n=3$ and concludes that it holds for the subset of ordered eigenvalues.
Nevertheless, the statement is correct (for any order) and Hill’s underlying rearrangement idea can be completed rigorously.

The main purpose of the present paper is to revisit Hill's theorem with a new, completely independent proof of the statement: for symmetric vector fields $f\col\R^n\to\R^n$, the following are equivalent:
\begin{itemize}
	\item vector-monotonicity in $\R^n$, and
	\item matrix-monotonicity of the induced isotropic tensor function $\Sigma_f$ on $\Sym(n)$.
\end{itemize}

Beyond its intrinsic interest in matrix analysis, this equivalence has direct implications in nonlinear elasticity. 
It shows that monotonicity conditions formulated in terms of principal stresses and principal strains are equivalent to tensorial monotonicity conditions for isotropic constitutive laws. 

In particular, it clarifies the relationship between various nonlinear generalizations of Drucker-type stability conditions 
and provides insight into both invertibility properties and the Baker-Ericksen inequalities \cite{baker1954inequalities}.

%
%
%
%
%
\section{Representations of isotropic functions and classical results}\label{sec:definitions}
We begin with some basic definitions concerning symmetric vector functions.
\begin{definition}[Symmetric set]
	We call $M\subset\R^n$ a \emph{symmetric set} if
	\begin{equation}
		x\in M\qquad\implies\qquad\left(x_{\pi(i)}\right)_i\in M
	\end{equation}
	for all permutations $\pi\col\{1,\dotsc,n\}\to\{1,\dotsc,n\}$.
\end{definition}
\begin{remark}
	If $I\subset\R$ is an interval, then $I^n\subset\R^n$ is a symmetric set.
\end{remark}
\begin{definition}
	Let $M\subset\R^n$ be a symmetric set. Then we denote by $\Sym(M)\subset\Sym(n)$ the set of all symmetric matrices with eigenvalues in $M$.
\end{definition}
Note that although the algebraic order of eigenvalues is not unique, for every matrix $S$ with eigenvalues $(s_1,\dotsc,s_n)\in M$, every permutation $(s_{\pi(1)},\dotsc,s_{\pi(n)})$ also belongs to $M$ by symmetry.

In the following, we denote by $\Sym(\Rp^n)=\Symp(n)$ the set of all positive definite matrices.
\begin{definition}[Symmetric function]\label{definition:Symmetric}
	A function $f=(f_1,\dotsc,f_n)\col M\subset\R^n\to\R^n$ on a symmetric set $M$ is called \emph{symmetric} if
	\begin{align}
		f_i(\lambda_{\pi(1)},\dotsc,\lambda_{\pi(n)}) = f_{\pi(i)}(\lambda_1,\dotsc,\lambda_n)
	\end{align}
	for any permutation $\pi\col\{1,\dotsc,n\}\to\{1,\dotsc,n\}$.
\end{definition}

%
%
\subsection{Three equivalent representations}\label{sec:3representations}
We recall the different representations of isotropic scalar- and tensor-valued functions in terms of matrices, (unordered) eigenvalues and ordered eigenvalues.
\begin{definition}[Isotropy]
	Let $M\subset\R^n$ be a symmetric set. A scalar-valued function $W \col \Sym(M)\to\R$ is called isotropic if
	\begin{equation}
		W(Q^TS\.Q)=W(S)
		\qquad\forall\; Q\in\OO(n)\,.
	\end{equation}
	A tensor function $\Sigma \col \Sym(M)\to\Sym(n)$ is called isotropic if
	\begin{equation}
		\Sigma(Q^TS\.Q)=Q^T\Sigma(S)\.Q
		\qquad\forall\; Q\in\OO(n)\,.\label{eq:isotropic}
	\end{equation}
\end{definition}
It is well known that $W$ can be represented in terms of the \emph{unordered} eigenvalues of the argument, more specifically: there exists a unique symmetric function $\fpot\col M\subset\R^n\to\R$ such that
\begin{equation}\label{eq:eigenvalueRepresentation}
	W(S) = \fpot(\lambda_1,\dotsc,\lambda_n)
\end{equation}
for all $S\in\Sym(M)\subset\Sym(n)$ with eigenvalues $\lambda_1(S),\dotsc,\lambda_n(S)$. Note that the symmetry of $\fpot$ ensures that \eqref{eq:eigenvalueRepresentation} holds independently of the ordering of the vector $\lambda\in M\subset\R^n$.

Similarly, an isotropic tensor function $\Sigma \col \Sym(M)\subset\Sym(n)\to\Sym(n)$  admits a unique representation
\begin{equation}
	\Sigma(S)=Q^T\diag\!\big(f(\lambda_1,\dots,\lambda_n)\big)\.Q\,,\qquad Q\in\OO(n)\,,
\end{equation}
where $f \col M\subset \R^n\to\R^n$ is a symmetric vector field and
\begin{equation}
	S=Q^T\diag(\lambda_1,\dots,\lambda_n)\.Q\,.
\end{equation}
\begin{definition}[Ordered vectors]
	Let
	\begin{equation}
		\Rsort \colonequals \{(\lambda_1,\dotsc,\lambda_n)\in\R^n \setvert \lambda_1\geq\dotsc\geq\lambda_n\},\qquad
		\Msort\colonequals \{(\lambda_1,\dotsc,\lambda_n)\in M \setvert \lambda_1\geq\dotsc\geq\lambda_n\}
	\end{equation}
	be the set of all ordered vectors in $\R^n$ and in a symmetric set $M\subset\R^n$, respectively.
\end{definition}
There is a similar, yet distinct representation in terms of the ordered eigenvalues: It is well known that there exists a unique function $\psi\col \Msort\subset\Rsort\to\R$ such that
\begin{equation}\label{eq:orderedEigenvalueRepresentation}
	W(S) =  \psi(\sort{\lambda}_1,\dotsc,\sort{\lambda}_n)
\end{equation}
for all $S\in\Sym(M)\subset\Sym(n)$ with (unsorted) eigenvalues $\lambda_1(S),\dotsc,\lambda_n(S)$, where $\sort{\lambda}\in\Msort$ denotes the sorting of a vector $\lambda\in M\subset\R^n$ in decreasing order.
		
Similarly, an isotropic tensor function $\Sigma \col \Sym(M)\subset\Sym(n)\to\Sym(n)$  admits a unique representation
\begin{equation}
	\Sigma(S)=Q^T\diag\!\big(\phi(\sort{\lambda}_1,\dotsc,\sort{\lambda}_n)\big)\.Q\,,
		\qquad	S=Q^T\diag(\sort{\lambda}_1,\dotsc,\sort{\lambda}_n)\.Q\,,
\end{equation}
where $\phi \col \Msort\subset \Rsort\to\R^n$ is a vector field satisfying
\begin{equation}
	\lambda_i=\lambda_j\qquad\implies\qquad\phi_i(\lambda)=\phi_j(\lambda)\,.\label{eq:orderPreservation}
\end{equation}
In particular, convexity or monotonicity of an isotropic function can also be characterized in the ordered formulation by restricting to $\Rsort$, provided the corresponding ordering and equality compatibility conditions \eqref{eq:orderPreservation} are imposed.

Tables \ref{table:overviewRealValued} and \ref{table:overviewTensorValued} provide an overview of the tensor, eigenvalue and ordered-eigenvalue representation of scalar-valued and tensor-valued functions on $\Symn$.  We continue with the equivalence between all three notions in Section \ref{sec:Preliminaries}.

\begin{table}[h!]
	\tabulinesep=.4em
	\begin{tabu}{|X[m,c]|X[m,c]|X[m,c]|X[m,c]|}
		\hline
		& $W\col\Symn\to\R$ & $\fpot\col\R^n\to\R$ & $\psi\col\Rsort\to\R$
		\\\hline
		representation: & $W(S)$ & $\fpot(\lambda_1,\dotsc,\lambda_n)$ & $\psi(\sort{\lambda}_1,\dotsc,\sort{\lambda}_n)$
		\\\hline
		well-definedness/\newline isotropy: & $W(Q^TSQ)=W(S)$ & $\fpot((\lambda_{\pi(i)})_i) = \fpot(\lambda)$ & ---
		\\\hline
		convexity: & $W$ convex on $\Symn$ & $\fpot$ convex on $\R^n$ & $\psi$ convex on $\Rsort$ and $\grad \psi(\lambda) \in \Rsort$
		\\\hline
	\end{tabu}
	\caption{\label{table:overviewRealValued} Different representations of isotropic scalar-valued functions on $\Symn$ with $\psi$ differentiable for the derivative-order characterization of convexity.}
\end{table}
\begin{table}[h!]
	\tabulinesep=.4em
	\begin{tabu}{|X[m,c]|X[m,c]|X[m,c]|X[m,c]|}
		\hline
		& $\Sigma\col\Symn\to\Symn$ & $f\col\R^n\to\R^n$ & $\phi\col\Rsort\to\R^n$
		\\\hline
		representation: & $\Sigma(S)$ & $f(\lambda_1,\dotsc,\lambda_n)$ & $\phi(\sort{\lambda}_1,\dotsc,\sort{\lambda}_n)$
		\\\hline
		well-definedness/\newline isotropy: & $\Sigma(Q^TSQ)=Q^T\Sigma(S)Q$ & $f_i\bigl((\lambda_{\pi(i)})_i\bigr) = f_{\pi(i)}\bigl((\lambda_i)_i\bigr)$ & \hspace{0.5cm}$\lambda_i = \lambda_j$\newline $\implies\, \phi_i(\lambda) = \phi_j(\lambda)$
		\\\hline
		monotonicity: & $\Sigma$ monotone on $\Symn$ & $f$ monotone on $\R^n$ & $\phi$ monotone on $\Rsort$ and $\phi(\lambda) \in \Rsort$
		\\\hline
	\end{tabu}
	\caption{\label{table:overviewTensorValued} Different representations of isotropic tensor-valued functions on $\Symn$.}
\end{table}

%
%
%
\subsection{Basic properties}
We continue with an overview of basic properties for convexity and monotonicity for the (unordered) case, i.e.\ with a symmetric set $M\subset\R^n$ and associated symmetric matrices $\Sym(M)\subset\Symn$ with (unordered) eigenvalues in $M$.
\begin{definition}[Convexity]
	An isotropic scalar-valued function $W \col \Sym(M)\to\R$ on a convex set $\Sym(M)\subset\Sym(n)$ is convex if
	\begin{equation}
		W((1-t)\.S+t\.T)\leq(1-t)\.W(S)+t\.W(T)
		\qquad\forall\; S,T\in\Sym(M)\subset\Sym(n)\,,\quad t\in[0,1]\,.\label{eq:convex1}
	\end{equation}
	A scalar-valued function $\fpot \col M\to\R$ on a convex set $M\subset \R^n$ is convex if
	\begin{equation}
		\fpot((1-t)\.x+t\.y)\leq(1-t)\.\fpot(x)+t\.\fpot(y)
		\qquad\forall\; x,y\in M\subset\R^n\,,\quad t\in[0,1]\,.\label{eq:convex2}
	\end{equation}
\end{definition}
We refer to \emph{strict convexity} when the inequalities \eqref{eq:convex1} and \eqref{eq:convex2} are strict for all $S\neq T$ and $x\neq y$ for $t\in(0,1)$.
\begin{definition}[Monotonicity]
	\label{def:monotonicity}
	An isotropic tensor function $\Sigma \col \Sym(M)\subset\Sym(n)\to\Sym(n)$ is \emph{monotone} if
	\begin{equation}
		\iprod{\Sigma(S)-\Sigma(T),S-T}\geq 0
		\qquad\forall\; S,T\in\Sym(M)\subset\Sym(n)\,,\label{eq:monotonicity1}
	\end{equation}
	where $\iprod{\cdot,\cdot}$ is the canonical inner product on $\Rnn$, i.e.\ $\iprod{A,B}=\tr(A^TB)$ for $A,B\in\Rnn$.
	
	A vector field $f \col M\subset \R^n\to\R^n$ is \emph{monotone} if
	\begin{equation}
		\iprod{f(x)-f(y),x-y}\geq 0
		\qquad\forall\; x,y\in M\subset\R^n,\label{eq:monotonicity2}
	\end{equation}
	where $\iprod{\cdot,\cdot}$ is the canonical inner product on $\R^n$.
\end{definition}
We refer to \emph{strict monotonicity} when the inequalities \eqref{eq:monotonicity1} and \eqref{eq:monotonicity2} are strict for all $S\neq T$ and $x\neq y$.

In the following section, we will now recall the Chandler Davis convexity theorem which connects the matrix and eigenvalues representation.

%
%
\subsection{The Chandler Davis convexity theorem}\label{sec:ChandlerDavis}

Let $W\col\Symn\to\R$ be isotropic, i.e.\ $W(Q^TS\.Q)=W(S)\text{ for all orthogonal }Q\in O(n)\,.$ The beautiful Chandler Davis convexity theorem \cite{davis1957all} for convex scalar-valued isotropic functions $W$ on real symmetric matrices $S\in\Symn$ states that $W\col\Symn\to\R$ is convex if and only if $W$ restricted to diagonal matrices 
\begin{align}
	W(S)=W(Q^T\diag(\lambda_1,\dotsc,\lambda_n)\.Q)=W\begin{pmatrix}\lambda_1&&\\&\ddots&\\&&\lambda_n\end{pmatrix}=\fpot(\lambda_1,\dotsc,\lambda_n)
\end{align}
is convex, in which $Q^T\diag(\lambda_1,\dotsc,\lambda_n)\.Q=S$ for some $Q\in\OO(n)$. Chandler Davis \cite{davis1957all} and John Ball \cite{ball1976convexity} have given independent proofs of this fact not requiring differentiability of $W$. Sir John Ball has extensively used the characterization of convexity expressed by $\fpot$ for the investigations of his polyconvexity \cite{ball1976convexity} and rank-one convexity in nonlinear elasticity. Ball's \cite{ball1976convexity} Theorem 5.1(i) states that if
\begin{align}
	\fpot\col\R^n_+\to\R\quad\text{is symmetric,}˝\qquad\text{then}\quad W\col U\in\Symp(n)\mapsto \fpot(\lambda_1(U),\dotsc,\lambda_n(U))\label{eq:BallTheorem5.1i}
\end{align}
is convex if and only if $\fpot$ is convex, with $\lambda_i(U)$ the eigenvalues of the positive definite tensor $U\in\Symp(n)\,.$ For further generalizations see also \citet{marques1982isotropie}. 
A well-known immediate application of \eqref{eq:BallTheorem5.1i} is the convexity of the mapping
\begin{align}
	W\col \Symp(n)\to\R\,,\qquad W(U)=-\log\det U\,.
\end{align}
Indeed
\begin{equation}
	W(U)=-\log(\lambda_1\dotsc\lambda_n)=-(\log\lambda_1+\dotsc+\log\lambda_n)=\fpot(\lambda_1,\dotsc,\lambda_n)
\end{equation}
and $\fpot$ is symmetric, where $\lambda_i=\lambda_i(U)$ are the eigenvalues of $U\,.$ Clearly, $\fpot\col\Rp^n\to\R$ is convex, and so is $W$.

Another surprising application of the Chandler Davis convexity theorem is an ingenious short proof by \citet{rivin2010golden} of a Golden-Thompson-type inequality \cite{golden1965lower,thompson1965inequality}, see also \citet[p.261]{bhatia2013matrix}, which states that for two Hermitian matrices $A$ and $B$, which need not commute, the following trace inequality holds:
\begin{align}
	\tr(\exp(A+B))\leq\tr(\exp(A))\.\tr(\exp(B))\,.\label{eq:GoldenThompson}
\end{align}
Rivin showed that the G-T-type inequality\footnote{The classical Golden-Thompson inequality uses $\tr(\exp(A)\.\exp(B))$ as its right-hand side. The product-of-trace bound from Rivin's statement \eqref{eq:GoldenThompson} is weaker, i.e.\ $\tr(\exp(A)\.\exp(B))\leq\tr(\exp(A))\.\tr(\exp(B))$ for positive definite matrices $\exp(A)$, $\exp(B)$.} \eqref{eq:GoldenThompson} is an easy consequence of the convexity of the mapping $X\in\Symn \mapsto\log\tr(\exp X)$, see Appendix \ref{appendix:RivinGoldenThompson}.

Further generalizations of the Chandler Davis theorem to groups acting non-linearly on convex subsets of arbitrary vector spaces are presented in \citet{grabovsky2005generalization} with interesting applications to the theory of composite materials.
Further proofs and interesting applications can be found in \citet[Another simple proof of a theorem of Chandler Davis]{rivin2002another}, \citet[Group invariance and convex matrix analysis]{lewis1996group}, \citet[Davis’ convexity theorem and extremal ellipsoids]{weber2010davis} and \citet[Hyperbolic polynomials and convex analysis]{lewisz1998hyperbolic}.

\citet{lewis1996derivatives} showed that $W$ is differentiable if and only if $\fpot$ is differentiable, see also \citet{silhavy2013mechanics} for higher differentiability.
In the differentiable case, Rodney \citet{hill1968constitutive} presented a simple argument for the Chandler Davis theorem (repeated in this paper as Remark \ref{remark:HillPotential}). Further developments and refinements of the convexity characterization for isotropic matrix functions were given in \cite{le1990fonctions}.

For Fr\'echet-differentiable functions $W\col\Symn\to\R\,,$ the convexity of $W$ translates into a monotonicity condition for the derivative $\D W(S)\in\Symn\,.$ The derivative of an isotropic scalar-valued function $W$ is an isotropic tensor function, i.e.
\begin{align}
	\D W(Q^TS\.Q)=Q^T\.\D W(S)\.Q\qquad\forall\;Q\in O(n)
\end{align}
and strict monotonicity (cf.\ Definition \ref{def:monotonicity}) in the canonical Hilbert-space inner product reads
\begin{align}
	\iprod{\D W(S)-\D W(T),\,S-T}>0\qquad\forall\; S\neq T\in\Symn\,.\label{eq:introductionMonotonicity}
\end{align}
Similarly, for the corresponding permutation-symmetric function $\fpot\col\R^n\to\R\,,$ strict convexity translates into strict monotonicity of the derivative:
\begin{align}
	\iprod{\grad\fpot(\lambda)-\grad\fpot(\mu),\,\lambda-\mu}>0\qquad\forall\;\lambda\neq\mu\in\R^n.\label{eq:introductionMonotonicity2}
\end{align}
We note that, due to isotropy, $\grad\fpot=(\partial_1\fpot,\dotsc,\partial_n\fpot)\col\R^n\to\R^n$ itself is symmetric.
\begin{proposition}\label{lemma:mono1}
	Let  $M\subset\R^n$ be a convex symmetric set with $\Sym(M)\subset\Sym(n)$ convex and $W \col \Sym(M)\to \R$ be isotropic, i.e.\ we can write $W(S)=\fpot(\lambda(S))$ for some $\fpot \col M \to \R$ symmetric and differentiable and $\lambda=(\lambda_1,\dotsc,\lambda_n)$ the eigenvalues of $S$.
	We define $\Sigma\colonequals \D W$ and $f\colonequals\grad\fpot$.
	Then
	\begin{equation}
		\Sigma\quad \text{is monotone on}\quad\Sym(M)
		\qquad\iff\qquad
		f\quad\text{is monotone on}\quad M\,.
	\end{equation}
\end{proposition}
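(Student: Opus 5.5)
The plan is to route both directions through convexity, using the Chandler Davis theorem recalled in Section~\ref{sec:ChandlerDavis}. For differentiable functions on convex sets, monotonicity of the gradient is equivalent to convexity of the function. So the statement should reduce to
\[
\Sigma=\D W \text{ monotone on } \Sym(M)\iff W \text{ convex on } \Sym(M)\iff \fpot \text{ convex on } M\iff f=\grad\fpot \text{ monotone on } M\,.
\]
The middle equivalence is the Chandler Davis theorem, in the form of Ball's version \eqref{eq:BallTheorem5.1i} extended to a convex symmetric domain $M$. The outer equivalences are the standard characterization of convexity via monotone gradients, applied once in $\Sym(n)$ with the Frobenius inner product and once in $\R^n$.

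First, I would settle differentiability. Since $\fpot$ is symmetric and differentiable, the result of \citet{lewis1996derivatives} gives that $W=\fpot\circ\lambda$ is differentiable. Its derivative is the isotropic tensor function
\[
\D W(S)=Q^T\diag(\grad\fpot(\lambda))\,Q \qquad\text{for } S=Q^T\diag(\lambda)\,Q\,.
\]
This identifies $\Sigma=\D W$ as the isotropic tensor function induced by $f=\grad\fpot$. That identification is what makes the statement meaningful. It is also where the symmetry of $\grad\fpot$, noted just before the proposition, is used: it makes the formula independent of the choice of $Q$ and of the eigenvalue ordering.

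Next, I would prove the elementary lemma on a convex set $C$ in a finite-dimensional inner product space: a differentiable $h$ is convex if and only if $\grad h$ is monotone. For the forward direction, add the two supporting-hyperplane inequalities
\[
h(y)\ge h(x)+\iprod{\grad h(x),y-x}\,,\qquad h(x)\ge h(y)+\iprod{\grad h(y),x-y}\,.
\]
For the converse, restrict to the segment $\varphi(t)=h(x+t(y-x))$. Then $\varphi'(t)=\iprod{\grad h(x+t(y-x)),y-x}$, and monotonicity makes $\varphi'$ nondecreasing, so $\varphi$ is convex. Applying this lemma to $h=W$ on $\Sym(M)$ and to $h=\fpot$ on $M$ gives the outer equivalences; both domains are convex by hypothesis.

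The expected obstacle is applying Chandler Davis on the restricted domain $\Sym(M)$ rather than on all of $\Sym(n)$. I would first try to reprove the needed implication directly in the style of Hill, as the paper announces for Remark~\ref{remark:HillPotential}. The direction ``$W$ convex $\Rightarrow$ $\fpot$ convex'' is immediate by restricting $W$ to diagonal matrices, which lie in $\Sym(M)$. For the converse, use the supporting-hyperplane inequality for $\fpot$ at $\lambda(S)$, together with von Neumann's trace inequality
\[
\iprod{\D W(S),T}\le\iprod{\sort{f(\lambda(S))},\sort{\lambda(T)}}\,.
\]
This requires that $f$ be similarly ordered with $\lambda$, which follows from convexity of the symmetric function $\fpot$. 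Combining the two yields $W(T)\ge W(S)+\iprod{\D W(S),T-S}$, which is convexity of $W$.
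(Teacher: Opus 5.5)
Your proposal is correct and follows the paper's proof, which is exactly the chain $\Sigma$ monotone $\iff$ $W$ convex $\iff$ $\fpot$ convex $\iff$ $f$ monotone, with the Chandler Davis theorem supplying the middle equivalence. Your handling of the restricted domain $\Sym(M)$ is Hill's argument from Remark~\ref{remark:HillPotential}: restriction to diagonal matrices gives one direction, and the supporting-hyperplane inequality combined with von Neumann's trace inequality and the ordering of $\grad\fpot$ gives the other. It therefore fills in a step the paper leaves to the citation rather than changing the route.
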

\begin{proof}
	Since $\Sigma=\D W$ and $f=\grad\fpot$, we already discussed
	\begin{align*}
		\Sigma\quad\text{is monotone on}\quad\Sym(M)
		\qquad&\iff\qquad
		W\quad\text{is convex on}\quad\Sym(M)\,,\\
		f\quad\text{is monotone on}\quad M
		\qquad&\iff\qquad
		\fpot\quad\text{is convex on}\quad M\,.
	\end{align*}
	The theorem of Chandler Davis yields
	\begin{align*}
		W\quad\text{is convex on}\quad\Sym(M)
		\qquad\iff\qquad
		\fpot\quad\text{is convex on}\quad M\,.
	\end{align*}
	Combining these equivalences yields
	\begin{align*}
		\Sigma \text{ monotone}
		\quad&\iff\quad
		W \text{ convex}
		\quad\iff\quad
		\fpot \text{ convex}
		\quad\iff\quad
		f \text{ monotone}\,.
		\qedhere
	\end{align*}
\end{proof}
The proof of Proposition \ref{lemma:mono1} requires a potential in order to apply the theorem of Chandler Davis. A straightforward computation between monotonicity of $\Sigma$ and $f$ only holds for commuting matrices.\footnote{
	Let $S,T\in\Sym(n)$ with orthogonal matrices $Q,R\in O(n)$ such that $S = Q^T\diag(\lambda)\.Q$ and $T = R^T\diag(\mu)\.R$.
Monotonicity would directly follow from equivalence of
\begin{align*}
	\iprod{\Sigma(S)-\Sigma(T),S-T}
	&= \iprod{Q^T\diag(f(\lambda))\.Q - R^T\diag(f(\mu))\.R, Q^T\diag(\lambda)\.Q - R^T\diag(\mu)\.R}\\
	&\overset{*}{=} \iprod{\diag(f(\lambda)) - \diag(f(\mu)), \diag(\lambda) - \diag(\mu)}=\iprod{f(\lambda)-f(\mu),\lambda-\mu}\,.
\end{align*}
However, step $*$ is not true in general for $Q\neq R$. The equivalence holds if and only if $S$ and $T$ are simultaneously diagonalizable or, equivalently, commute with $S\.T=T\.S$  \cite{agn_thiel2019empirical}.}

Back to our example
\begin{equation}
	W(U)=-\log\det U\qquad\longrightarrow\qquad \D W(U)=-\frac{1}{\det(U)}\Cof(U)=-U^{-1}\,,
\end{equation}
we see that the isotropic tensor function $U\to-U^{-1}$ is strictly monotone:
\begin{equation}
	\iprod{-(U^{-1}-V^{-1}),\,U-V}>0\qquad\forall\;U\neq V\in\Symp(n)\,,
\end{equation}
because the corresponding vector field
\begin{equation}
	\fpot(\lambda_1,\dotsc,\lambda_n)=-(\log\lambda_1+\dotsc+\log\lambda_n)\qquad\longrightarrow\qquad \grad\fpot(\lambda_1,\dotsc,\lambda_n)=-\left(\frac{1}{\lambda_1},\dotsc,\frac{1}{\lambda_n}\right)^T
\end{equation}
is (strictly) monotone since
\begin{equation}
	\sum_{i=1}^n-\left(\frac{1}{\lambda_i}-\frac{1}{\mu_i}\right)(\lambda_i-\mu_i)>0\qquad\forall\;\lambda\neq\mu\in\R^n.
\end{equation}
 Hill's result \cite{hill1968constitutive} and the Davis theorem express the equivalence of these two monotonicity conditions \eqref{eq:introductionMonotonicity} and \eqref{eq:introductionMonotonicity2}, cf.\ Proposition \ref{lemma:mono1}. We refer to this as the \enquote{potential case}.
In this paper we will consider the \enquote{non-potential case} in the following sense:

The derivative $\D W$ naturally generalizes to the isotropic tensor function
\begin{align}
	\Sigma_f\col\Sym(M)\to\Symn\,,\quad \Sigma_f(\underbrace{Q^T\.\diag(\lambda_1,\dotsc,\lambda_n)\. Q}_{S\in\Sym(n)}) = \underbrace{Q^T\.\diag(f(\lambda_1,\dotsc,\lambda_n))\. Q}_{\Sigma_f(S)\in\Sym(n)} \quad\forall\;Q\in\On\label{eq:introductionMatrixFunction}
\end{align}
with a symmetric vector function $f=(f_1,\dotsc,f_n)\col M\subset\R^n\to\R^n$, where $\Sym(M)\subset\Sym(n)$ denotes the set of all symmetric $n\times n$ matrices with eigenvalues in $M$, $\On$ is the orthogonal group and $\diag(\lambda_1,\dotsc,\lambda_n)$ is the diagonal matrix with diagonal entries $\lambda_1,\dotsc,\lambda_n$. Conversely, the eigenvalue representation of an isotropic tensor function is permutation symmetric.

Going back to Definition \ref{def:monotonicity}, we know call the monotonicity of a vector field $f \col M\subset \R^n\to\R^n$ as \emph{vector-monotonicity} and write:
\begin{definition}[Matrix-monotonicity]
	A symmetric function $f\col M\subset\R^n\to\R^n$ is called \emph{matrix-monotone} if
	\begin{equation}
		\iprod{\Sigma_f(S)-\Sigma_f(T),\, S-T} \geq 0\qquad\forall\;S,T\in\Sym(M)\subset\Symn
	\end{equation}
	where $\iprod{\cdot,\cdot}$ denotes the canonical inner product on $\Rnn$. We call $f$ \emph{strictly matrix-monotone} if the inequality is strict for all $S\neq T$.
\end{definition}
 It is easy to see that matrix-monotonicity implies vector-monotonicity by inserting only diagonal matrices in \eqref{eq:introductionMatrixFunction}:
\begin{lemma}
	\label{lemma:matrixMonotonicityImpliesVectorMonotonicity}
	Let $f\col M\subset\R^n\to\R^n$ be a (strictly) matrix-monotone symmetric function. Then $f$ is (strictly) vector-monotone.
\end{lemma}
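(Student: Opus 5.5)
The plan is to test the matrix-monotonicity inequality only on diagonal matrices, as the paper's remark just before the statement suggests. Fix arbitrary $x,y\in M$ and set $S\colonequals\diag(x)$ and $T\colonequals\diag(y)$. Since the eigenvalues of $S$ are exactly $x_1,\dotsc,x_n$ and $x\in M$, we have $S\in\Sym(M)$; likewise $T\in\Sym(M)$.

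First I evaluate $\Sigma_f$ at these matrices. Choosing $Q=\id$ in the defining relation \eqref{eq:introductionMatrixFunction} gives $\Sigma_f(\diag(x))=\diag(f(x))$ and $\Sigma_f(\diag(y))=\diag(f(y))$. This evaluation is legitimate because $\Sigma_f$ is well defined: it does not depend on the choice of $Q$ or on the ordering of the eigenvalues, precisely because $f$ is symmetric in the sense of Definition \ref{definition:Symmetric}. This well-definedness is the only point that needs any care. For the statement it is part of the standing setting in which $\Sigma_f$ is introduced, so I would only remark on it briefly rather than reprove it.

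Next I compute the inner product. Since a product of diagonal matrices is diagonal,
\[
\iprod{\Sigma_f(S)-\Sigma_f(T),S-T}=\tr\bigl(\diag(f(x)-f(y))\,\diag(x-y)\bigr)=\sum_{i=1}^n\bigl(f_i(x)-f_i(y)\bigr)(x_i-y_i)=\iprod{f(x)-f(y),x-y}\,.
\]
Matrix-monotonicity says the left-hand side is $\geq0$, so the right-hand side is $\geq0$. Since $x,y\in M$ were arbitrary, $f$ is vector-monotone.

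For the strict case, suppose $x\neq y$. Then $S=\diag(x)\neq\diag(y)=T$, so strict matrix-monotonicity makes the left-hand side $>0$, and hence $\iprod{f(x)-f(y),x-y}>0$. There is no real obstacle in this argument: the statement is simply the restriction of the matrix inequality to the commuting, simultaneously diagonal case, in which the step marked $*$ in the earlier footnote holds with $Q=R=\id$.
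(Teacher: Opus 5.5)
Your proposal is correct and matches the paper's proof: the paper also substitutes $S=\diag(x)$ and $T=\diag(y)$, identifies $\iprod{\Sigma_f(S)-\Sigma_f(T),S-T}$ with $\iprod{f(x)-f(y),x-y}$, and obtains the strict case from $x\neq y\iff\diag(x)\neq\diag(y)$. Your extra checks that $\diag(x)\in\Sym(M)$ and that $\Sigma_f$ is well defined are fine, but the paper leaves them implicit.
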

\begin{proof}
	Let $f$ be matrix-monotone. For $x,y\in M\subset\R^n$, choose $S=\diag(x)$ and $T=\diag(y)$. Then
	\begin{equation}
		\iprod{f(x)-f(y),\, x-y} = \iprod{\Sigma_f(S)-\Sigma_f(T),\, S-T} \geq 0\,.
	\end{equation}
	If $f$ is strictly matrix-monotone, then the inequality is strict for $x\neq y$.
\end{proof}
The reverse implication is known to be true in case that $f=\grad\fpot$ (the potential case as seen above). But what about the non-potential case, i.e.\ with $f$ not necessarily of the form $\grad\fpot$?

We have extensively checked with a number of colleagues from mathematics, mechanics and engineering as to whether it is known that vector-monotonicity implies matrix-monotonicity. We did not get any hint in this direction and the general feeling was that such an implication might not be true in the general non-potential case. Indeed, we started out trying to find a counterexample to the mentioned implication. Since this did not work out, we re-analyzed the situation and found a general proof independent of any mechanical context. Our Theorem \ref{theorem:mainResult}:
\begin{framed}
	\textit{A symmetric function $f\col M\subset\R^n\to\R^n$ is (strictly) vector-monotone if and only if it is (strictly) matrix-monotone.}
\end{framed}
is the main result of this contribution.

\textit{In a late stage of preparing this manuscript, we have found in the otherwise famous paper by \citet{hill1970constitutive} a somewhat hidden similar statement in the context of nonlinear elasticity. Upon first reading, we came up with a counterexample to Hill's claim based on a literal interpretation of his text. However, in light of the proof of our main Theorem \ref{theorem:mainResult} it is possible to clarify Hill's attempt. We will delineate both readings of Hill's analysis and present our genuine proof of the general result.}

Let us next turn to where and how the question of matrix-monotonicity versus vector-monotonicity appears in nonlinear elasticity.
	
%
%
%
\subsection{Motivation: ideal isotropic nonlinear elasticity -- stress increases with strain and the elastic Drucker postulate}\label{sec:motivation}

In isotropic linear elasticity, it is customary to say that \enquote{stress increases with strain} as the Cauchy stress is a strongly  monotone function of the infinitesimal symmetric strain tensor\footnote{Also sometimes written as $\langle \mathrm d\sigma, \mathrm d\varepsilon\rangle>0$ or $\langle\dot{\sigma},\dot{\varepsilon}\rangle>0$ and referred to as \textbf{Drucker-stability} condition.   } $\varepsilon=\sym \D u$, i.e.
\begin{align}\label{Css}
	\sigma:\Sym(3)\to \Sym(3)\,,\qquad 	\iprod{\sigma(\eps)-\sigma(\overline\eps), \eps-\overline\eps}\geq c^+\norm{\eps-\overline\eps}^2,\qquad \qquad \eps\neq \overline\eps
\end{align}	
for some $c^+>0$, provided the shear modulus $\mu>0$ and the bulk modulus $\kappa=\frac{2\mu+3\lambda}{3}>0$. This condition also implies stable material behaviour and uniqueness of the linear boundary value problem. Moreover, $\varepsilon\mapsto \sigma(\varepsilon)$ is invertible. In linear elasticity\footnote{In nonlinear elasticity the Cauchy stress is not in general a potential with respect to the chosen finite-strain variable.} the Cauchy stress is naturally the derivative of the quadratic energy with respect to infinitesimal strain, i.e.\ $\sigma=\D_{\varepsilon} W_{\rm lin}(\varepsilon)$.
Thus condition \eqref{Css} expresses nothing else but the strict convexity of $\varepsilon\mapsto W(\varepsilon)$. Since $W$ is a rotationally invariant function with $W_{\rm lin}(Q^T\.\varepsilon\.Q)=W_{\rm lin}(\varepsilon)$ for all $Q\in {\rm O}(3)$ due to isotropy, we may write $W(\varepsilon)=\fpot(\varepsilon_1, \varepsilon_2, \varepsilon_3)$ as a function of the principal strains (the eigenvalues of the strain tensor $\varepsilon$). According to the Chandler Davis theorem \cite{davis1957all,Lewis03,Lewis96,Lewis96b}, $\varepsilon\mapsto W(\varepsilon)$ is convex if and only if $(\varepsilon_1, \varepsilon_2, \varepsilon_3)\mapsto \fpot(\varepsilon_1, \varepsilon_2, \varepsilon_3)$ is convex. The principal stresses (eigenvalues of $\sigma$) are obtained as $\sigma_i=\partial_{\varepsilon_i}\fpot(\varepsilon_1, \varepsilon_2, \varepsilon_3)$. Therefore, the monotonicity  restricted to some symmetric stress measure in principal stresses versus principal strains is equivalent to the monotonicity of Cauchy stresses versus strains. Note that this argument needs the existence of a potential for the given stress tensor (the potential case).
	
Well known and often used ``potential'' pairings in nonlinear elasticity are the second Piola-Kirchhoff stress $S_2=2\.\D_C \. W(C)$, the Biot-stress tensor $T_{\rm Biot}=\D_U\. W(U)$, and the Kirchhoff stress tensor $\tau=\D_{\log V}W(\log V)$ in terms of the strain tensors $C, U, \log V,$ respectively, where we have the left and right polar-decomposition \cite{neff2014grioli,neff2014logarithmic} of $F\in\GLp(3)$ as
\begin{align}
F=V\. R\qquad\text{and}\qquad F=R\, U\,,\qquad V\,,\;U\in\Symp(3)\,,\quad R\in\SO(3)\\
\text{and}\quad B\colonequals FF^T=V\.R\.R^TV^T=V^2,\qquad C\colonequals F^TF=U^TR^TR\.U=U^2.\notag
\end{align}
Unfortunately, in nonlinear elasticity the situation is not exhausted by these possibilities. A fundamental ambiguity arises as to which stress tensor and which strain tensor should possibly be taken as a replacement of \eqref{Css}. As an example, consider Hill's inequality \cite{hill1968constitutive,hill1970constitutive,wang1973introduction,d2025constitutive} which requires monotonicity of the Kirchhoff-stress $\tau$ in the spatial logarithmic strain-tensor $\log V$, i.e.
\begin{align}\label{Kss}
	\langle \tau(\log V)-\tau(\log \overline V), \log V-\log \overline V\rangle>0\,,\qquad \qquad V\neq \overline V\in \Symp(3)\,.
\end{align}	
Clearly, for isotropic hyperelasticity, this condition is equivalent to the strict convexity of the mapping $\log V\to W(\log V)$ in terms of the strain-tensor\footnote{Ogden's energy \cite{ogden1972large} has been derived conforming with Hill's inequality: it satisfies Hill's inequality in a compact set.} $\log V$ due to
\begin{equation}
	\tau=\D_{\log V}W(\log V)
\end{equation}
and applying the Chandler Davis theorem. However, this condition alone has little or nothing to do with stability since e.g.\ the quadratic Hencky energy \cite{neff2016geometry,hencky1929superpositionsgesetz} satisfies \eqref{Kss}, but is unstable in large strains \cite{agn_neff2015exponentiatedI,jog2013conditions}.
It might be argued that for a useful stability condition, in the spirit of \textbf{Drucker's postulate} \eqref{Css} \cite{clayton2014analysis}, we should always use the Cauchy stress tensor since this stress tensor is most readily measured experimentally - it is also called the true-stress tensor since it appears naturally in the current configuration.
Based on this interpretation, in \cite{jog2013conditions,jog2015continuum,wollner2025search,neff2025hypo,ghiba2026polyconvexity,klein2026polyconvexity,wollner2026concurrent} the condition TSTS-$\rm M^+$ \cite{agn_neff2015exponentiatedI} has been put forward as a new stability condition.\footnote{\textbf{T}rue \textbf{S}tress-\textbf{T}rue \textbf{S}train (TSTS-M$^+$) means the monotonicity of the Cauchy stress tensor as a function of $\log B$ or $\log V,$ which is equivalent, since $\log B=2\log V$.} It reads
\begin{align}\label{TSTS}
	\text{TSTS-M}^+:\qquad\langle \sigma (\log V)-\sigma (\log \overline V), \log V-\log \overline V\rangle >0\,,\qquad \qquad V\neq \overline V\in\Symp(3)\,.
\end{align}
Up to the present date, no elastic energy has been found which would satisfy \eqref{TSTS} everywhere and be Legendre-Hadamard-elliptic in the whole deformation range. A simple example, satisfying \eqref{TSTS} everywhere but which is elliptic only in a large  compact set is the exponentiated Hencky-type energy \cite{agn_neff2015exponentiatedI}
\begin{align}
	F\mapsto \frac{1}{2} e^{\|\log V\|^2}, \qquad\qquad V\in\Symp(3)\,,\quad F\in\GLp(3)\,.
\end{align}
A precursor to the TSTS-M$^+$ condition is Leblond's requirement \cite{leblond1992constitutive} expressing the \enquote{same} monotonicity in terms of principal Cauchy stresses versus principal logarithmic stretches, TSTS$^*$-M$^+$, which reads
\begin{align}
	\text{TSTS*-M}^+\!:\;\.\sum\limits_{i=1}^3[\sigma_i(\log \lambda_1,\log \lambda_2,\log \lambda_3)-\sigma_i(\log \overline{\lambda}_1,\log \overline{\lambda}_2,\log \overline{\lambda}_3)]\,[\log \lambda_i-\log\overline{\lambda}_i]>0\,,\quad\lambda\neq\lambdabar\,.
\end{align}
The equivalence of TSTS$^*$-M$^+$ with TSTS-M$^+$ is not directly clear from the Chandler Davis theorem since $\sigma$ does not have a potential in general. However, the equivalence would simplify the calculation needed to test for TSTS$^*$-M$^+$ instead of TSTS-M$^+$, cf.\ Appendix \ref{appendix:examples}.
In other papers \cite{guo2006application} it is a nonlinear version of \textbf{Drucker's postulate}\footnote{The meaning of Drucker's postulate is ambiguous, since the corresponding stress-strain pair is not a priori clear. The commercial finite element program \textsc{Abaqus} checks Kirchhoff stress versus logarithmic strain. In \citet[eq.(28)]{boyce2000constitutive} we read, however: \enquote{The Drucker stability criterion requires that the tangential stiffness matrix (or Hessian) be positive definite. The Hessian may be written in terms of the strain energy density as $H_{ijkl}=\frac{\partial^2 W}{\partial\eps_{ij}\.\partial\eps_{kl}}$, where $\eps_{ij}$ is the strain.}
This interpretation seems to favour \eqref{Ci}.
In Baaser \cite{baaser2026hyperelastic} it has been clarified that the built–in \textsc{Abaqus} stability check is fully equivalent to Hill’s condition \eqref{Kss} for the incompressible case $\det V=\det\overline V=1$.} 
that is stipulated, which is there meant to mean, in terms of principal Cauchy stresses $\sigma$ versus principal stretches,
\begin{align}\label{ui1}
	\text{TSS*-M}^+:\qquad\sum\limits_{i=1}^3[\sigma_i( \lambda_1, \lambda_2, \lambda_3)-\sigma_i( \overline{\lambda}_1, \overline{\lambda}_2, \overline{\lambda}_3)]\,[ \lambda_i-\overline{\lambda}_i]>0\,,\qquad\qquad\lambda\neq\lambdabar\,.
\end{align}
The corresponding condition in terms of stretch tensors is
\begin{align}\label{TSS}
	 \text{TSS-M}^+:\qquad\langle \sigma (V)-\sigma (\overline V),\,  V- \overline V\rangle >0\,,\qquad \qquad V\neq \overline V\in\Symp(3)\,.
\end{align}
Again, while \eqref{TSS} implies \eqref{ui1} on taking $V,\overline V$ diagonal, the reverse implication is not immediately clear. Another condition in this spirit for $T_{\rm Biot}(U)=\D_U W(U)$ is (see also \cite{ogden1977inequalities})
\begin{align}\label{Ci}
	\langle T_{\rm Biot}(U)-T_{\rm Biot}(\overline U),\, U-\overline U \rangle>0\,,\qquad \qquad U\neq\overline U\in \Symp(3)\,,
\end{align}
which is connected to the {\it Coleman-Noll} inequality \cite{coleman1959thermostatics} in the sense that the Coleman-Noll inequality also implies that the map $U\mapsto W(U)$ is convex in $U\,.$ Another condition for $S_2(C)=2\.\D_C W(C)$ is
\begin{align}\label{Ciu}
	\langle S_2(C)-S_2(\overline C),\, C- \overline C \rangle>0\,,\qquad \qquad C\neq \overline C\in\Symp(3)\,,
\end{align}
which is the requirement that $C\mapsto W(C)$ be convex in $C$. This last requirement is not completely off topic \cite{lehmich2012convexity,spector2015note,silhavy2015convexity}
since there exist energies which satisfy \eqref{Ciu} and are polyconvex while satisfying $W(F)\to \infty$ for $\det F\to 0$ \cite{lehmich2012convexity}.
We also note that \eqref{Css}, \eqref{Kss}, \eqref{Ci} and \eqref{Ciu} specify \textit{work-conjugate} pairs \cite{ogden1997non}, while \eqref{TSTS} and \eqref{TSS} do not.
	
It seems reasonable to require that constitutive conditions in isotropic nonlinear elasticity should be independent of whether they are written in principal-stress/principal-strain formulation versus tensorial stress–strain formulation.
In this paper, we will show that monotonicity conditions and invertibility of stresses versus strains indeed satisfy this requirement.
	
%
%
%
%
\section{Monotonicity of isotropic tensor functions}

This paper is now structured as follows. In the following section, we characterize isotropic tensor functions in the light of vector- and matrix-monotonicity.
Then we illustrate Hill's clarified proof for the potential case.
Last, we present our own proof for the equivalence in the non-potential case.

In addition, we give further insight into Hill's interpretation of the potential case in a list of appendices:
\begin{itemize}
	\item Appendix \ref{appendix:OriginalProofHill} shows our first reading of Hill's proof limited to the set of ordered eigenvalues.
	\item Appendix \ref{appendix:ImprovedInterpretationHill} details an (improved) interpretation for non-ordered eigenvalues.
	\item Appendix \ref{appendix:HillOgdenProof} states Ogden's interpretation of Hill's proof again on the subset of ordered eigenvalues.
\end{itemize}

Overall, our verdict on Hill's work is that his original theorem is, of course, correct. However, its proof is presented in a remarkably condensed form that leaves several essential arguments to the reader. In particular, the ordering of the vectors is ambiguous.
We will now present a new, complete, and self-contained proof that we hope makes the relation between vector and matrix-monotonicity more transparent and accessible.
\begin{theorem}[Chandler Davis theorem \cite{davis1957all,ball1976convexity}]
	\label{theorem:DavisLewis}
	Let $W\col \Symn\to\R$ be isotropic, i.e.
	\begin{equation}
		W(Q^TSQ) = W(S) \qquad\forall\; S\in\Symn\,,\quad Q\in\On\,.
	\end{equation}
	Then $W$ is convex on $\Symn$ if and only if its representation in terms of the eigenvalues, i.e.\ the uniquely determined symmetric function $\fpot\col\R^n\to\R$ with
	\begin{equation}
		W(S) = \fpot(\lambda_1,\dotsc,\lambda_n)
	\end{equation}
	for all $S\in\Symn$ with eigenvalues $\lambda_1,\dotsc,\lambda_n$, is convex on $\R^n$.
\end{theorem}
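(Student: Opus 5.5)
The easy direction comes first. If $W$ is convex on $\Symn$, then restricting it to the linear subspace of diagonal matrices preserves convexity. Since $\fpot(\lambda)=W(\diag(\lambda))$ and $\lambda\mapsto\diag(\lambda)$ is linear, $\fpot$ is convex on $\R^n$.

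For the converse, assume $\fpot$ is convex and symmetric. I would first reduce convexity of $W$ to a single inequality. By Schur's theorem, for any $Q\in\On$ the diagonal of $Q^T\diag(\lambda)Q$ has the form $P\lambda$, where $P=(Q_{ki}^2)_{i,k}$ is doubly stochastic. Combining Birkhoff's theorem with symmetry and convexity of $\fpot$ gives
\begin{equation*}
	\fpot(P\lambda)\le \sum_\pi t_\pi\.\fpot(\lambda_\pi)=\fpot(\lambda)\,.
\end{equation*}
Hence $\fpot(\mathrm{d}(S))\le W(S)$ for every $S\in\Symn$, where $\mathrm{d}(S)$ denotes the vector of diagonal entries of $S$.

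Next, take $S,T\in\Symn$ and $t\in[0,1]$, and let $R\in\On$ diagonalize the combination, $(1-t)S+tT=R^T\diag(\nu)R$. Conjugating by $R$ is linear, so
\begin{equation*}
	\nu=\mathrm{d}\bigl(R((1-t)S+tT)R^T\bigr)=(1-t)\.\mathrm{d}(RSR^T)+t\.\mathrm{d}(RTR^T)\,.
\end{equation*}
Then convexity of $\fpot$, followed by the Schur-type inequality above and isotropy of $W$, yields
\begin{equation*}
	W((1-t)S+tT)=\fpot(\nu)\le(1-t)\.\fpot(\mathrm{d}(RSR^T))+t\.\fpot(\mathrm{d}(RTR^T))\le(1-t)\.W(S)+t\.W(T)\,.
\end{equation*}
This is the convexity of $W$.

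The main step is the inequality $\fpot(\mathrm{d}(S))\le W(S)$. I would prove it concretely. Orthogonality of $Q$ makes $P$ doubly stochastic, since its row and column sums are sums of squares of the entries of orthonormal vectors. Birkhoff--von Neumann then writes $P$ as a convex combination of permutation matrices. Finally, Jensen's inequality for convex $\fpot$, together with $\fpot(\lambda_\pi)=\fpot(\lambda)$, closes the argument. No differentiability is needed anywhere, consistent with the proofs of Davis and Ball.
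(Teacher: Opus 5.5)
Your proof is correct and complete. Restricting to the diagonal subspace gives the easy direction. For the converse, the bound $\fpot(\mathrm{d}(S))\le W(S)$, applied to $RSR^T$ and $RTR^T$ with $R$ diagonalizing $(1-t)S+tT$, gives convexity of $W$ with no regularity assumptions.

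The paper does not prove this theorem itself. It cites Davis and Ball, and the only argument it reproduces is Hill's proof for the differentiable case in Remark~\ref{remark:HillPotential}, which takes a different route. Hill verifies the supporting-hyperplane inequality $W(\overline\eps)-W(\eps)\ge\langle \D W(\eps),\overline\eps-\eps\rangle$ in three steps:
\begin{itemize}
\item he reorders the eigenvalues of $\overline\eps$ into the order of those of $\eps$, using symmetry of $\fpot$;
\item he applies the gradient inequality for $\fpot$;
\item he bounds $\sum_i\partial_i\fpot(\eps)\,\widetilde\eps_i$ from below by $\langle\D W(\eps),\overline\eps\rangle$ via the rearrangement/trace inequality of Lemma~\ref{lemma:hardy}, which requires $\nabla\fpot(\eps)$ to be ordered like $\eps$ (Lemma~\ref{lemma:vectorMonotonicityImpliesSameAlgebraicOrder}).
\end{itemize}
That route needs differentiability, the identification of $\D W(\eps)$ as the coaxial matrix with eigenvalues $\partial_i\fpot(\eps)$, and a trace inequality for non-commuting matrices. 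Your argument needs none of these and covers arbitrary convex $\fpot$. It is essentially the Davis/Ball-type argument the paper refers to.

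Your two key ingredients, the doubly stochastic matrix $(Q_{ki}^2)$ and the Birkhoff--von Neumann theorem, are exactly the ones the paper uses to prove Lemma~\ref{lemma:mainLemma} for its monotonicity theorem. There they show that a \emph{linear} functional on $B_n$ is maximized at a permutation matrix. You combine them with Jensen's inequality to show that a \emph{convex} symmetric function on the convex hull of the permutations of $\lambda$ is maximized at $\lambda$ itself.
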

A convexity criterion analogous to the Chandler Davis Theorem but for the ordered eigenvalues was given by Friedland \cite[Theorem 2.2]{friedland1981convex}.
\begin{theorem}\label{theorem:friedlandOrderedConvexity}
	Let $W\col \Symn\to\R$ be isotropic and let $\psi\col \Rsort\to\R$ be continuously differentiable with $W(S) =  \psi(\sort{\lambda}_1,\dotsc,\sort{\lambda}_n)$. Then $W$ is convex if and only if $\psi$ is convex and
	\begin{equation}
		\pdd{\psi}{x_1}(x) \geq \ldots \geq \pdd{\psi}{x_n}(x)\qquad\forall\;x\in\Rsort\,.
	\end{equation}
	Furthermore, $W$ is strictly convex if and only if $\psi$ is strictly convex and
	\begin{equation}
		\pdd{\psi}{x_i}(x) > \pdd{\psi}{x_j}(x) \qquad\text{if}\quad x_i>x_j\,.
	\end{equation}
\end{theorem}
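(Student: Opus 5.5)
The plan is to reduce Theorem~\ref{theorem:friedlandOrderedConvexity} to the Chandler Davis theorem (Theorem~\ref{theorem:DavisLewis}). We pass through the symmetric representation $\fpot(\lambda)=\psi(\sort{\lambda})$. By Theorem~\ref{theorem:DavisLewis}, $W$ is convex on $\Symn$ if and only if $\fpot$ is convex on $\R^n$. So it suffices to show that
\[
\fpot \text{ convex on } \R^n \iff \psi \text{ convex on } \Rsort \text{ and } \partial_1\psi\geq\dots\geq\partial_n\psi \text{ on } \Rsort .
\]
For the strict statement I would use the strict version of the Davis theorem, in the form given by Lewis.

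\emph{Necessity.} Convexity of $\psi$ is immediate, because $\psi=\fpot|_{\Rsort}$ and $\Rsort$ is a convex cone. For the gradient ordering it suffices to compare adjacent indices $i$ and $i+1$. Fix $x\in\Rsort$ with $x_i>x_{i+1}$ and let $\tau$ be the transposition of $i$ and $i+1$. Consider $h(t)=\fpot\bigl(x+t\,(x_i-x_{i+1})(e_{i+1}-e_i)\bigr)$ for $t\in[0,1]$. This function is convex, and it satisfies $h(t)=h(1-t)$ because $\fpot$ is symmetric and the segment runs from $x$ to $\tau x$. Hence $h$ attains its minimum at $t=1/2$, so $h'(0^+)\le 0$. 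For small $t>0$ the points stay in $\Rsort$, where $\fpot=\psi$ is $C^1$. Therefore $h'(0^+)\le 0$ reads $\partial_{i+1}\psi(x)\le\partial_i\psi(x)$. If $x_i=x_{i+1}$, I approximate $x$ by points of $\Rsort$ with $x_i>x_{i+1}$ and use continuity of $\nabla\psi$ on the closed cone. In the strict case $h$ is strictly convex and symmetric about $1/2$. Then $h'(0^+)<0$, which gives the strict inequality whenever $x_i>x_{i+1}$. Chaining adjacent strict and non-strict inequalities gives the claim for general $i<j$ with $x_i>x_j$.

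\emph{Sufficiency.} I would show that $\fpot$ has an affine minorant touching it at every point. By symmetry it is enough to take $x\in\Rsort$. For arbitrary $y\in\R^n$, the gradient inequality for the convex $C^1$ function $\psi$ on $\Rsort$ gives
\[
\fpot(y)=\psi(\sort y)\ \ge\ \psi(x)+\iprod{\nabla\psi(x),\sort y-x}.
\]
Since $\nabla\psi(x)$ is decreasingly ordered, the rearrangement inequality yields $\iprod{\nabla\psi(x),\sort y}\ge\iprod{\nabla\psi(x),y}$. Hence $\fpot(y)\ge\fpot(x)+\iprod{\nabla\psi(x),y-x}$. Thus $\fpot$ is the pointwise supremum of affine functions that are exact at each point, and so it is convex.

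For strict convexity I would show that this support inequality is strict for every $y\ne x$; a function with strict affine supports at every point is strictly convex. If $\sort y\ne x$, strict convexity of $\psi$ makes the first inequality strict. If $\sort y=x$ but $y\ne x$, then $y$ is a permutation of $x$ that moves some coordinate value across a strictly smaller one. In that case the rearrangement inequality is strict, because $\partial_i\psi(x)>\partial_j\psi(x)$ whenever $x_i>x_j$.

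The main obstacle I expect is the boundary behaviour of $\psi$ on $\partial\Rsort$, where $\fpot$ need not be differentiable. It enters in the limiting argument for tied coordinates in the necessity part, and in justifying the gradient inequality for $\psi$ at boundary points in the sufficiency part. Both steps rely on $\psi$ being $C^1$ up to the boundary of the cone, which is how I read the hypothesis \enquote{continuously differentiable on $\Rsort$}.
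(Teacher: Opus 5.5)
The paper does not prove this statement; it only quotes it from Friedland. So there is no proof in the paper to match against. Your argument is correct under your reading of the hypothesis.

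It is the convex-function counterpart of the paper's own proof of its tensor-valued generalization in the subsection ``Generalization of Friedland's theorem''. That proof also passes to the symmetric extension and gets the ordering condition by swapping two coordinates. It then uses Hardy's rearrangement inequality (Lemma~\ref{lemma:hardy}) to compare sorted and unsorted pairings, and finally invokes a Davis-type equivalence: Theorem~\ref{theorem:mainResult} there, Theorem~\ref{theorem:DavisLewis} for you.

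What your route buys is that it works with exact affine minorants of $\fpot$ and with one-sided derivatives along segments inside $\Rsort$. It therefore never needs $\fpot$ or $W$ to be differentiable at spectra with repeated entries. This matters. For $W(S)=\lambda_{\max}(S)$ one has $\psi(x)=x_1$ and $\nabla\psi\equiv e_1$, which violates the tie condition $\phi_i(x)=\phi_j(x)$ for $x_i=x_j$ that the paper's generalization requires. So Friedland's theorem in this generality cannot be obtained from that result, whereas your argument covers it.

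Your reading of ``continuously differentiable on $\Rsort$'' as $C^1$ up to the boundary, including continuity and differentiability of $\psi$ at boundary points, is genuinely needed. For $n=2$, take $\psi=0$ on the interior of $\Rsort$ and $\psi=1$ on its boundary. This $\psi$ is convex and has a continuously extendable zero gradient, yet $W(\diag(1,0))=W(\diag(0,1))=0<1=W(\tfrac12\id)$.

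\textbf{Ordering at ties in the strict sufficiency.} You reuse the support inequality there, and its rearrangement step $\iprod{\nabla\psi(x),\sort{y}}\geq\iprod{\nabla\psi(x),y}$ needs $\nabla\psi(x)$ to be non-increasing also across tied coordinates of $x$. The strict hypothesis only constrains pairs with $x_i>x_j$. If that were all you knew, a gradient with $\partial_1\psi<\partial_2\psi$ at $x=(0,0)$ would make the step fail for $y=(0,1)$. The missing non-strict ordering at ties follows from the strict one by the same continuity argument you use in the necessity part, so state it explicitly.

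\textbf{Strict Davis theorem.} It is not in the paper. You only need the direction ``$\fpot$ strictly convex $\Rightarrow$ $W$ strictly convex''. You can cite it from Lewis, or prove it from Ky Fan's majorization $\lambda\bigl(\tfrac{S+T}{2}\bigr)\prec\tfrac12\bigl(\sort{\lambda(S)}+\sort{\lambda(T)}\bigr)$. For the equal-spectrum case, use that $\norm{\tfrac{S+T}{2}}<\norm{S}$ when $S\neq T$ have the same spectrum. Then $\lambda(\tfrac{S+T}{2})$ is a nontrivial convex combination of distinct permutations of that spectrum, and strict convexity of $\fpot$ gives the strict inequality.
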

Next, we continue on Section \ref{sec:definitions} with some preliminary statements that we need to prove our main result.

%
%
%
\subsection{Preliminaries}\label{sec:Preliminaries}

In Section \ref{sec:3representations}, we have introduced the three different representations of isotropic scalar- and tensor-valued functions on $\Sym(n)$ in terms of matrices, (unordered) eigenvalues and ordered eigenvalues. We will now state their equivalence:
\begin{lemma}\label{lemma:existenceAndUniquenessOfMatrixFunction}
	Let $f=(f_1,\dotsc,f_n)\col M\subset\R^n\to\R^n$ be symmetric. Then there exists a unique tensor function $\Sigma_f\col\Sym(M)\subset\Sym(n)\to\Symn$ with
	\begin{equation}
		\Sigma_f(\underbrace{Q^T\.\diag(\lambda_1,\dotsc,\lambda_n)\. Q}_{S\in\Sym(M)}) = \underbrace{Q^T\.\diag(f(\lambda_1,\dotsc,\lambda_n))\. Q}_{\Sigma_f(S)\in\Sym(n)} \qquad\forall\; Q\in\On\,.
	\end{equation}
\end{lemma}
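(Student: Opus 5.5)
Uniqueness is immediate, so the real work is showing that $\Sigma_f$ is well defined. Every $S\in\Sym(M)$ admits a spectral decomposition $S=Q^T\diag(\lambda)\,Q$ with $Q\in\On$ and $\lambda\in M$. Any function satisfying the displayed identity must therefore take the value $Q^T\diag(f(\lambda))\,Q$ at $S$, which gives uniqueness. For existence, I will \emph{define} $\Sigma_f(S)\colonequals Q^T\diag(f(\lambda))\,Q$ for some spectral decomposition of $S$, and then show the value does not depend on the choice of $(Q,\lambda)$. Once that is done, the identity holds for every $Q$ automatically, since each such $Q$ comes from a valid decomposition.

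Suppose $S=Q^T\diag(\lambda)\,Q=R^T\diag(\mu)\,R$ are two spectral decompositions with $\lambda,\mu\in M$. The plan has three steps.

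First, $\mu$ is a permutation of $\lambda$, since both list the eigenvalues of $S$ with multiplicity. Choose a permutation $\pi$ with $\mu_i=\lambda_{\pi(i)}$. If $P$ is the associated permutation matrix, then $\diag(\mu)=P^T\diag(\lambda)\,P$. By the symmetry of $f$ (Definition \ref{definition:Symmetric}), $f_i(\mu)=f_{\pi(i)}(\lambda)$, so also $\diag(f(\mu))=P^T\diag(f(\lambda))\,P$. Replacing $R$ by $PR$ therefore reduces everything to the case $\mu=\lambda$.

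Second, with $\mu=\lambda$, set $O\colonequals QR^T\in\On$. The equality of the two decompositions becomes $O^T\diag(\lambda)\,O=\diag(\lambda)$, so $O$ commutes with $D\colonequals\diag(\lambda)$. I then have to show that $O$ also commutes with $\diag(f(\lambda))$, because that yields $R^T\diag(f(\lambda))\,R=Q^T\diag(f(\lambda))\,Q$.

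Third, and this is the main obstacle, is the step where the symmetry of $f$ enters a second time. Since $O$ commutes with $D$, it preserves each eigenspace $E_c=\operatorname{span}\{e_i : \lambda_i=c\}$. If $\lambda_i=\lambda_j$, the transposition $\pi=(i\,j)$ fixes $\lambda$, and symmetry gives $f_i(\lambda)=f_j(\lambda)$. This is exactly the compatibility condition \eqref{eq:orderPreservation}. Hence $\diag(f(\lambda))$ is constant on each $E_c$. It acts as the scalar $f_i(\lambda)$ on $E_c$, or equivalently can be written as $\sum_c f_{i_c}(\lambda)\,\Pi_c$ with $\Pi_c$ the orthogonal projection onto $E_c$. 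Any $O$ preserving every $E_c$ commutes with each $\Pi_c$ and therefore with $\diag(f(\lambda))$. An alternative route is to take a Lagrange interpolation polynomial $p$ with $p(\lambda_i)=f_i(\lambda)$, which is well defined by the compatibility just shown. Then $\diag(f(\lambda))=p(D)$, and any matrix commuting with $D$ commutes with $p(D)$.

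Finally, the value $Q^T\diag(f(\lambda))\,Q$ is symmetric, so $\Sigma_f$ indeed maps into $\Symn$.
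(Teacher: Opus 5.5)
Your proof is correct and complete. The paper gives no argument for this lemma and only cites \v{S}ilhav\'y \cite{silhavy2013mechanics}, so you are supplying a self-contained proof where the paper relies on the literature.

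Your version makes explicit that the symmetry of $f$ is needed to handle two distinct sources of non-uniqueness in the spectral decomposition $S=Q^T\diag(\lambda)\,Q$:
\begin{itemize}
\item Reordering of the eigenvalue list. You handle this with a permutation matrix $P$ and the identity $f_i(\mu)=f_{\pi(i)}(\lambda)$.
\item Orthogonal transformations inside the eigenspaces of repeated eigenvalues. You handle this with the consequence $\lambda_i=\lambda_j\Rightarrow f_i(\lambda)=f_j(\lambda)$. That is exactly the compatibility condition \eqref{eq:orderPreservation}, and it makes $\diag(f(\lambda))$ a polynomial in $\diag(\lambda)$, so it commutes with every $O$ that commutes with $\diag(\lambda)$.
\end{itemize}

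The paper's surrounding remarks only mention the permutation ambiguity ("different $Q$ can cause a permutation in the order of the eigenvalues"). Your second step therefore fills in the part that is easiest to overlook. The same compatibility condition reappears in the paper's proof of Lemma~\ref{lemma:existenceOfsymmetricExpansion}.

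Your argument uses nothing about $M$ beyond its being a symmetric set, and no regularity of $f$. That matches the generality in which the lemma is stated.

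The citation buys brevity. Your route buys transparency about exactly where, and why twice, the symmetry of $f$ enters.
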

\begin{proof}
	See \cite{silhavy2013mechanics}.
\end{proof}
In general, the reverse of Lemma \ref{lemma:existenceAndUniquenessOfMatrixFunction} is also true: for every isotropic function $\sigma\col\Symn\to\Symn$ (i.e.\ any function of the form \ref{eq:isotropic}), there exists a vector function $f\col\R^n\to\R^n$ (commonly denoted by $\sigmahat=f$) such that
$\sigma=\Sigma_f$. For $n=3$, this follows from the unique
characterization \cite[p.471(13.9)]{antman} 
\begin{align}
	\sigma(S)=\beta_0(I_1,I_2,I_3)\.\id+\beta_1(I_1,I_2,I_3)\.S+\beta_2(I_1,I_2,I_3)\.S^2\label{eq:PrincipalMatrixInvariants}
\end{align}
of $\sigma$ due to the Rivlin-Ericksen representation theorem \cite{rivlin1997stress,richter1948isotrope} with the principal matrix invariants
\begin{align}
	I_1&=\tr(S)\,, &I_2&=\frac{1}{2}[(\tr\,S)^2-\,\tr(S^2)]=\tr(\Cof S)\,, &I_3&=\det S\,,\\
	I_1&=s_1+s_2+s_3\,, &I_2&=s_1\.s_2+s_1\.s_3+s_2\.s_3\,, &I_3&=s_1\.s_2\.s_3\label{eq:InvariantsInEigenvalues}
\end{align}
and scalar-valued functions $\beta_0,\beta_1,\beta_2\col\R^3\to\R$ with $s_1,s_2,s_3$ as the eigenvalues of $S$. Taking into consideration \eqref{eq:InvariantsInEigenvalues}, we may express \eqref{eq:PrincipalMatrixInvariants} also as
\begin{align}
	\sigma(S)=\alpha_0(s_1,s_2,s_3)\.\id+\alpha_1(s_1,s_2,s_3)\.S+\alpha_2(s_1,s_2,s_3)\.S^2\label{eq:SigmaInPrincipalMatrixInvariants}
\end{align}
with symmetric scalar-valued functions $\alpha_0,\alpha_1,\alpha_2\col\R^3\to\R\,.$ For diagonal matrices $S=D=\diag(d_1,d_2,d_3)$ it follows
\begin{align}
	\sigma(D)=\alpha_0(d_1,d_2,d_3)\.\id+\alpha_1(d_1,d_2,d_3)\.\diag(d_1,d_2,d_3)+\alpha_2(d_1,d_2,d_3)\.\diag(d_1^2,d_2^2,d_3^2)
\end{align}
and we can define the vector function $\sigmahat\col\R^3\to\R^3$ as
\begin{align}
	\sigmahat_i(d_1,d_2,d_3)=\alpha_0(d_1,d_2,d_3)+\alpha_1(d_1,d_2,d_3)\.d_i+\alpha_2(d_1,d_2,d_3)\.d_i^2\,.\label{eq:definitionOfSigmaF}
\end{align}
The function $\sigmahat$ is symmetric in terms of Definition \ref{definition:Symmetric}  because for each permutation $\pi\col\{1,2,3\}\to\{1,2,3\}$
\begin{align}
	\sigmahat_i(d_{\pi(1)},d_{\pi(2)},d_{\pi(3)})&=\alpha_0(d_{\pi(1)},d_{\pi(2)},d_{\pi(3)})+\alpha_1(d_{\pi(1)},d_{\pi(2)},d_{\pi(3)})\.d_{\pi(i)}+\alpha_2(d_{\pi(1)},d_{\pi(2)},d_{\pi(3)})\.d_{\pi(i)}^2\notag\\
	&=\alpha_0(d_1,d_2,d_3)+\alpha_1(d_1,d_2,d_3)\.d_{\pi(i)}+\alpha_2(d_1,d_2,d_3)\.d_{\pi(i)}^2=\sigmahat_{\pi(i)}(d_1,d_2,d_3)\,.
\end{align}

The next two statements follow directly from Lemma \ref{lemma:existenceAndUniquenessOfMatrixFunction}.
\begin{lemma}
	Let $f=(f_1,\dotsc,f_n)\col M\subset\R^n\to\R^n$ be symmetric. Then $\Sigma_f\col\Sym(M)\subset\Sym(n)\to\Symn$ is \emph{isotropic}, i.e.
	\begin{equation}
		\Sigma_f(Q^T S\.Q) = Q^T \Sigma_f(S)\.Q
	\end{equation}
	for all $S\in\Sym(M)\subset\Sym(n)$ and all $Q\in\On$.
\end{lemma}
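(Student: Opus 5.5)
The plan is to derive isotropy directly from the defining property in Lemma \ref{lemma:existenceAndUniquenessOfMatrixFunction}. That property holds for \emph{every} orthogonal diagonalization of the argument, not only for a particular one. We first fix $S\in\Sym(M)$ and $Q\in\On$. By the spectral theorem we choose $R\in\On$ and $\lambda=(\lambda_1,\dotsc,\lambda_n)\in M$ with $S=R^T\diag(\lambda)\.R$. Then
\begin{equation*}
	Q^TS\.Q = Q^TR^T\diag(\lambda)\.R\.Q = (RQ)^T\diag(\lambda)\.(RQ)\,.
\end{equation*}
Since $RQ\in\On$ and the eigenvalues are unchanged, the matrix $Q^TS\.Q$ also lies in $\Sym(M)$, so $\Sigma_f(Q^TS\.Q)$ is defined.

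Next we apply the defining identity of Lemma \ref{lemma:existenceAndUniquenessOfMatrixFunction} twice. Applied to the diagonalization $(RQ,\lambda)$ of $Q^TS\.Q$, it gives $\Sigma_f(Q^TS\.Q)=(RQ)^T\diag(f(\lambda))\.(RQ)=Q^T\bigl[R^T\diag(f(\lambda))\.R\bigr]Q$. Applied to the diagonalization $(R,\lambda)$ of $S$, it gives $R^T\diag(f(\lambda))\.R=\Sigma_f(S)$. Combining the two yields $\Sigma_f(Q^TS\.Q)=Q^T\Sigma_f(S)\.Q$, which is the claim.

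The only delicate point is well-definedness. The value $\Sigma_f(Q^TS\.Q)$ must not depend on which diagonalization we use. The pair $(RQ,\lambda)$ is one admissible diagonalization, while a different orthogonal matrix or a permuted eigenvalue vector could equally be chosen. This independence is exactly the content of Lemma \ref{lemma:existenceAndUniquenessOfMatrixFunction}: it asserts that a single function $\Sigma_f$ satisfies the identity for all $Q\in\On$ and all representations. That lemma in turn relies on the symmetry of $f$ to handle permutations, and on repeated eigenvalues being assigned equal values of $f_i$. Since we may assume the lemma, no further obstacle remains, and the proof reduces to the two-line computation above.
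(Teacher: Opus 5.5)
Your proof is correct and takes the paper's route: the paper gives no separate argument and only says the lemma follows directly from Lemma \ref{lemma:existenceAndUniquenessOfMatrixFunction}. Your computation $\Sigma_f(Q^TSQ)=(RQ)^T\diag(f(\lambda))(RQ)=Q^T\Sigma_f(S)Q$ is exactly that omitted verification, and you rightly leave well-definedness to the existence-and-uniqueness lemma.
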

\begin{lemma}
	Let $\Sigma\col\Sym(M)\subset\Sym(n)\to\Symn$ be an isotropic tensor function. Then there exists a unique function $f\col M\subset\R^n\to\R^n$ such that
	\begin{equation}
		\Sigma(Q^T\diag(\lambda_1,\dotsc,\lambda_n)\.Q) = Q^T\diag(f(\lambda_1,\dotsc,\lambda_n))\.Q
	\end{equation}
	for all $Q\in\On$ and all $(\lambda_1,\dotsc,\lambda_n)\in M$. The function $f$ is symmetric due to the non-uniqueness of $\diag(\lambda_1,\dotsc,\lambda_n)$. Different $Q\in\On$ can cause a permutation in the order of the eigenvalues.
\end{lemma}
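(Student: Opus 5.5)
We define $f$ on diagonal inputs and then use isotropy of $\Sigma$ to show that $f$ is well defined and symmetric. For $\lambda=(\lambda_1,\dotsc,\lambda_n)\in M$ we set $\diag(f(\lambda))\colonequals\Sigma(\diag(\lambda))$, which requires $\Sigma(\diag(\lambda))$ to be diagonal. We fix $j$ and take the reflection $Q_j=\id-2e_je_j^T\in\On$. Then $Q_j^T\diag(\lambda)Q_j=\diag(\lambda)$, so isotropy \eqref{eq:isotropic} gives $\Sigma(\diag(\lambda))=Q_j^T\Sigma(\diag(\lambda))Q_j$. Conjugation by $Q_j$ flips the sign of every off-diagonal entry in row and column $j$, so those entries vanish. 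Doing this for all $j$ shows that $\Sigma(\diag(\lambda))$ is diagonal, and this defines $f\col M\to\R^n$ uniquely, since uniqueness is forced by evaluating the required identity at $Q=\id$.

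For symmetry, let $\pi$ be a permutation and $P_\pi\in\On$ the permutation matrix with $P_\pi^T\diag(\lambda)P_\pi=\diag(\lambda_{\pi(1)},\dotsc,\lambda_{\pi(n)})$; this vector lies in $M$ because $M$ is a symmetric set. Isotropy gives
\[
\diag\bigl(f(\lambda_{\pi(1)},\dotsc,\lambda_{\pi(n)})\bigr)=\Sigma\bigl(P_\pi^T\diag(\lambda)P_\pi\bigr)=P_\pi^T\diag(f(\lambda))P_\pi=\diag\bigl(f_{\pi(1)}(\lambda),\dotsc,f_{\pi(n)}(\lambda)\bigr),
\]
which is exactly $f_i(\lambda_{\pi(1)},\dotsc,\lambda_{\pi(n)})=f_{\pi(i)}(\lambda)$ in Definition \ref{definition:Symmetric}.

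It remains to verify the representation for arbitrary $Q\in\On$. Let $S=Q^T\diag(\lambda)Q$. Isotropy applied with the orthogonal matrix $Q$ yields $\Sigma(S)=Q^T\Sigma(\diag(\lambda))Q=Q^T\diag(f(\lambda))Q$. Consistency across different diagonalizations $S=Q^T\diag(\lambda)Q=R^T\diag(\mu)R$ is automatic, because both right-hand sides equal $\Sigma(S)$. Equivalently, $\Sigma_f$ from Lemma \ref{lemma:existenceAndUniquenessOfMatrixFunction} coincides with $\Sigma$.

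The only step with real content is the first one, showing that $\Sigma$ maps diagonal matrices to diagonal matrices. The reflection argument settles it without any hypothesis on distinct eigenvalues. Everything else is bookkeeping with permutation matrices, and the main care needed is getting the index convention for $P_\pi$ to match the definition of symmetry.
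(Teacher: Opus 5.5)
Your proof is correct and complete, and it differs from the paper in the one step with real content. The paper gives no separate argument for this lemma. It declares the lemma a direct consequence of Lemma~\ref{lemma:existenceAndUniquenessOfMatrixFunction}, cited to \v{S}ilhav\'y. That lemma runs the other way, from $f$ to $\Sigma_f$, so it does not by itself show that $\Sigma(\diag(\lambda))$ is diagonal. The paper justifies symmetry by the remark that different diagonalizing $Q$ permute the eigenvalues. The only place it actually extracts $f$ from a given $\Sigma$ is the $n=3$ discussion via the Rivlin--Ericksen representation $\sigma(S)=\alpha_0\id+\alpha_1S+\alpha_2S^2$. From that representation, diagonality of $\sigma(D)$ and symmetry of $\widehat\sigma_i=\alpha_0+\alpha_1d_i+\alpha_2d_i^2$ can be read off.

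You replace the representation theorem with the reflection argument. It proves directly that $\Sigma$ maps diagonal matrices to diagonal ones, for every $n$ and every symmetric set $M$. This is essentially the standard first step in proving Rivlin--Ericksen-type representations. Your permutation-matrix computation is the paper's ``non-uniqueness of $\diag$'' remark made precise, with the index convention checked. The paper's route buys an explicit polynomial structure with invariant coefficients, which it reuses in the semi-invertibility discussion. Yours buys generality and independence from any representation theorem.

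One caveat on scope: your reflections $Q_j$ are improper ($\det Q_j=-1$). That is legitimate because isotropy is imposed for all of $\On$. Under mere $\SO(n)$-invariance you would need products of two reflections when $n\geq 3$. For $n=2$ the diagonality claim genuinely fails, e.g.\ for $S\mapsto JS-SJ$ with $J$ the rotation by $\pi/2$.
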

\begin{remark}\label{remark:identifySigmaWithF}
	As a result of the last two lemmas we can identify $f\longleftrightarrow\Sigma_f$.
\end{remark}
\begin{lemma}
	Let $f=(f_1,\dotsc,f_n)\col M\subset\R^n\to\R^n$ be symmetric. Then there exists a unique function $\phi\col \Msort\subset\Rsort\to\R^n$ with
	\begin{equation}
		\phi(\lambda_1,\dotsc,\lambda_n)=f(\lambda_1,\dotsc,\lambda_n)\qquad\forall\;\lambda\in \Msort\subset\Rsort\,.
	\end{equation}
	Furthermore, $\phi_i(x)=\phi_j(x)$ for all $x\in \Msort\subset\Rsort$ with $x_i=x_j$ due to the symmetry of $f$.
\end{lemma}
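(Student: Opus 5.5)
Since $M$ is a symmetric set, a vector $\lambda\in M$ and its decreasing rearrangement $\sort{\lambda}$ both lie in $M$, and $\sort{\lambda}\in\Msort$. This suggests simply defining $\phi$ as the restriction
\[
	\phi\colonequals f|_{\Msort}\colon \Msort\to\R^n .
\]
Then $\phi(\lambda)=f(\lambda)$ for all $\lambda\in\Msort$ holds by construction, which settles existence. Uniqueness is immediate: any function satisfying the defining identity on $\Msort$ must coincide with $f$ there, so it equals $f|_{\Msort}$.

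For the second assertion, fix $x\in\Msort$ with $x_i=x_j$. Let $\pi$ be the transposition of $i$ and $j$. Because $x_i=x_j$, the permuted vector $(x_{\pi(1)},\dotsc,x_{\pi(n)})$ is just $x$ itself. Applying Definition~\ref{definition:Symmetric} to this $\pi$ at index $i$ gives
\[
	f_i(x)=f_i(x_{\pi(1)},\dotsc,x_{\pi(n)})=f_{\pi(i)}(x)=f_j(x),
\]
so $\phi_i(x)=\phi_j(x)$. This is exactly the compatibility condition \eqref{eq:orderPreservation}.

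I expect no real obstacle here; the only point that needs care is noting that $\Msort\subset M$, so the restriction makes sense.

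It may also be worth adding the converse, to justify the third column of Table~\ref{table:overviewTensorValued}. Given $\phi$ on $\Msort$ satisfying \eqref{eq:orderPreservation}, define $f$ at $\lambda$ by choosing a permutation $\sigma$ with $\lambda_{\sigma(k)}=\sort{\lambda}_k$ and setting $f_{\sigma(k)}(\lambda)\colonequals\phi_k(\sort{\lambda})$. The choice of $\sigma$ is ambiguous only when eigenvalues coincide, and \eqref{eq:orderPreservation} is precisely what makes the definition independent of that choice. The resulting $f$ is symmetric by construction.
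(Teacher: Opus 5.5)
Your proof is correct and is the argument the paper leaves implicit, since the paper states this lemma without a separate proof: $\phi$ is simply $f$ restricted to $\Msort$, and applying the symmetry of $f$ to the transposition of $i$ and $j$, which fixes $x$ when $x_i=x_j$, gives $\phi_i(x)=\phi_j(x)$. Your optional converse construction, $f_{\sigma(k)}(\lambda)=\phi_k(\sort{\lambda})$, is the same extension $f(\lambda)=\bigl(\phi_{\pi^{-1}(i)}((\lambda_{\pi(j)})_j)\bigr)_i$ that the paper uses in its proof of Lemma~\ref{lemma:existenceOfsymmetricExpansion}.
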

\begin{lemma}\label{lemma:existenceOfsymmetricExpansion}
	Let $\phi\col \Msort\subset\Rsort\to\R^n$ such that $\phi_i(x)=\phi_j(x)$ for all $x\in \Msort\subset\Rsort$ with $x_i=x_j$. Then there exists a unique symmetric function $f=(f_1,\dotsc,f_n)\col M\subset\R^n\to\R^n$ with
	\begin{equation}
		f(\lambda_1,\dotsc,\lambda_n)=\phi(\lambda_1,\dotsc,\lambda_n)\qquad\forall\;\lambda\in \Msort\subset\Rsort\,.
	\end{equation}
\end{lemma}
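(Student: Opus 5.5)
Given $\phi$ on the ordered cone $\Msort$ satisfying the compatibility condition \eqref{eq:orderPreservation}, I would define $f$ by sorting. For $\lambda\in M$, choose a permutation $\pi$ with $(\lambda_{\pi(1)},\dotsc,\lambda_{\pi(n)})=\sort{\lambda}\in\Msort$; this is possible because $M$ is symmetric. Then set
\begin{equation*}
	f_{\pi(i)}(\lambda)\colonequals\phi_i(\sort{\lambda})\,,\qquad i=1,\dotsc,n\,.
\end{equation*}
Equivalently, $f(\lambda)=P_\pi^T\phi(P_\pi\lambda)$, where $P_\pi$ is the corresponding permutation matrix.

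\textbf{Step 1: well-definedness.} The sorting permutation $\pi$ is not unique when $\lambda$ has repeated entries. Any two sorting permutations $\pi,\pi'$ differ by a permutation $\tau$ that only exchanges indices inside blocks of equal entries of $\sort{\lambda}$. If $\tau$ swaps $i$ and $j$ inside such a block, then $\sort{\lambda}_i=\sort{\lambda}_j$. The hypothesis then gives $\phi_i(\sort{\lambda})=\phi_j(\sort{\lambda})$, so both choices assign the same values to $f$. This is the only delicate point of the construction, and it is exactly where \eqref{eq:orderPreservation} is used.

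\textbf{Step 2: $f$ agrees with $\phi$ on $\Msort$.} If $\lambda\in\Msort$, the identity is a sorting permutation. Hence $f(\lambda)=\phi(\lambda)$.

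\textbf{Step 3: symmetry.} Let $\sigma$ be any permutation and put $\mu=(\lambda_{\sigma(1)},\dotsc,\lambda_{\sigma(n)})$. Then $\sort{\mu}=\sort{\lambda}$. If $\pi$ sorts $\lambda$, then $\sigma^{-1}\circ\pi$ sorts $\mu$, since $\mu_{\sigma^{-1}(\pi(k))}=\lambda_{\pi(k)}$. The definition therefore gives $f_{\sigma^{-1}(\pi(k))}(\mu)=\phi_k(\sort{\lambda})=f_{\pi(k)}(\lambda)$. Substituting $i=\sigma^{-1}(\pi(k))$ yields $f_i(\mu)=f_{\sigma(i)}(\lambda)$, which is Definition \ref{definition:Symmetric}.

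\textbf{Step 4: uniqueness.} Let $g$ be another symmetric function that agrees with $\phi$ on $\Msort$. For any $\lambda\in M$ with sorting permutation $\pi$, symmetry of $g$ gives $g_{\pi(i)}(\lambda)=g_i(\sort{\lambda})=\phi_i(\sort{\lambda})=f_{\pi(i)}(\lambda)$ for every $i$. Hence $g=f$.

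Everything here is routine index bookkeeping. The main thing to get right is the direction of the permutation in Steps 1 and 3; I would check it on a transposition first.
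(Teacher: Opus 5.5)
Your proof is correct and uses the paper's construction: your definition $f_{\pi(i)}(\lambda)=\phi_i(\sort{\lambda})$ is the paper's formula $f_i(\lambda)=\phi_{\pi^{-1}(i)}\bigl((\lambda_{\pi(j)})_j\bigr)$, and well-definedness rests, as there, on the compatibility condition \eqref{eq:orderPreservation}. You carry out the checks the paper leaves implicit, namely the index bookkeeping for symmetry and the uniqueness argument, which the paper does not spell out.
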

\begin{proof}
	We can extend $f\col M\subset\R^n\to\R^n$ as
	\begin{equation}
	f(\lambda_1,\dotsc,\lambda_n)=\left(\phi_{\pi^{-1}(i)}\left(\left(\lambda_{\pi(j)}\right)_j\right)\right)_i
	\end{equation}
	with $\pi\col\{1,\dotsc,n\}\to\{1,\dotsc,n\}$ a permutation such that $\left(\lambda_{\pi(j)}\right)_j\in \Msort\subset\Rsort.$ The permutation is unique up to points where two components of $\lambda$ are equal. Because of the compatibility condition \eqref{eq:orderPreservation}, $f$ is a symmetric function on $M\subset\R^n$.
\end{proof}
\begin{corollary}
	As a result of the last two lemmas we can identify $f\longleftrightarrow \phi$ and with Remark \ref{remark:identifySigmaWithF} also $\phi\longleftrightarrow f\longleftrightarrow\Sigma_f$. We will write $\Sigma_{f_\phi}$ or just $\Sigma^\phi$ for the isotropic tensor function defined by $\phi$.
\end{corollary}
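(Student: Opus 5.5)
The plan is to show that the two constructions in the preceding lemmas, restriction to $\Msort$ and symmetric extension from $\Msort$, are mutually inverse bijections. These act between the set of symmetric functions $f\col M\to\R^n$ and the set of functions $\phi\col\Msort\to\R^n$ satisfying the compatibility condition \eqref{eq:orderPreservation}. Composing this with the identification $f\longleftrightarrow\Sigma_f$ of Remark \ref{remark:identifySigmaWithF} then gives the chain $\phi\longleftrightarrow f\longleftrightarrow\Sigma_f$, which justifies the notation $\Sigma^\phi$.

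First, start from a symmetric $f$ and let $\phi\colonequals f|_{\Msort}$. By the lemma preceding Lemma \ref{lemma:existenceOfsymmetricExpansion}, $\phi$ satisfies \eqref{eq:orderPreservation}. Lemma \ref{lemma:existenceOfsymmetricExpansion} then produces a \emph{unique} symmetric extension $\tilde f$ of $\phi$. Since $f$ is itself a symmetric function agreeing with $\phi$ on $\Msort$, uniqueness forces $\tilde f=f$. Concretely, for $\lambda\in M$ choose a sorting permutation $\pi$ and use the symmetry relation $f_i((\lambda_{\pi(j)})_j)=f_{\pi(i)}(\lambda)$. This shows that $f$ is completely determined by its values on $\Msort$, so extension after restriction is the identity.

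Second, start from a compatible $\phi$ and let $f$ be its symmetric extension. Its restriction to $\Msort$ is $\phi$ by construction, since for $\lambda\in\Msort$ one may take $\pi=\mathrm{id}$. Hence restriction after extension is the identity as well.

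The only point needing care is the well-definedness of the extension when $\lambda$ has repeated entries, because the sorting permutation $\pi$ is then not unique. Two sorting permutations differ by a permutation $\rho$ that only exchanges indices carrying equal values of $\lambda$. By \eqref{eq:orderPreservation}, $\phi$ takes equal values at those indices, so both formulas give the same vector. This is already contained in Lemma \ref{lemma:existenceOfsymmetricExpansion}, so I expect no real obstacle.

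Finally, compose with the bijection $f\mapsto\Sigma_f$ between symmetric functions and isotropic tensor functions on $\Sym(M)$. This bijection is given by Lemma \ref{lemma:existenceAndUniquenessOfMatrixFunction} together with the converse lemma that every isotropic $\Sigma$ has a unique symmetric representation $f$. The resulting composite bijection $\phi\mapsto\Sigma_{f_\phi}=\Sigma^\phi$ satisfies
\begin{equation*}
	\Sigma^\phi(Q^T\diag(\lambda)\.Q)=Q^T\diag(\phi(\lambda))\.Q\qquad\forall\;\lambda\in\Msort,\ Q\in\On\,.
\end{equation*}
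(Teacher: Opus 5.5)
Your proposal is correct and follows the route the paper intends. The restriction lemma and Lemma~\ref{lemma:existenceOfsymmetricExpansion} give mutually inverse maps $f\mapsto f|_{\Msort}$ and $\phi\mapsto f_\phi$, and composing with the bijection $f\leftrightarrow\Sigma_f$ of Remark~\ref{remark:identifySigmaWithF} gives the chain. Your explicit checks (uniqueness of the symmetric extension, the choice $\pi=\mathrm{id}$ on $\Msort$, and independence of the sorting permutation via \eqref{eq:orderPreservation}) simply spell out what the paper's one-line justification leaves implicit.
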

\begin{lemma}
\label{lemma:orderedVectorFunctionInducesTensorFunction}
	Let $\phi\col\Rsort\to\R^n$ such that $\phi_i(x)=\phi_j(x)$ for all $x\in\Rsort$ with $x_i=x_j$. Then there exists a unique isotropic function $\Sigma^\phi \col\Symn\to\Symn$ such that
	\begin{equation}
		\Sigma^\phi (Q^T\diag(\lambda_1,\dotsc,\lambda_n)\. Q) = Q^T\diag(\phi(\lambda_1,\dotsc,\lambda_n))\. Q
	\end{equation}
	for all $Q\in\On$ and all $\lambda_1\geq\ldots\geq\lambda_n$.
\end{lemma}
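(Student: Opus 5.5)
The plan is to reduce the statement to the two preceding results, Lemma \ref{lemma:existenceOfsymmetricExpansion} and Lemma \ref{lemma:existenceAndUniquenessOfMatrixFunction}, taking $M=\R^n$, so that $\Msort=\Rsort$. There are two parts: existence and uniqueness.

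\emph{Existence.} Apply Lemma \ref{lemma:existenceOfsymmetricExpansion} to $\phi\col\Rsort\to\R^n$. By the compatibility hypothesis $\phi_i(x)=\phi_j(x)$ whenever $x_i=x_j$, this yields a unique symmetric $f\col\R^n\to\R^n$ with $f=\phi$ on $\Rsort$. Lemma \ref{lemma:existenceAndUniquenessOfMatrixFunction} then gives a unique isotropic $\Sigma_f$ with
\begin{equation*}
\Sigma_f(Q^T\diag(\lambda)\,Q)=Q^T\diag(f(\lambda))\,Q
\end{equation*}
for all $Q\in\On$ and all $\lambda\in\R^n$. Restricting to $\lambda\in\Rsort$, where $f(\lambda)=\phi(\lambda)$, shows that $\Sigma^\phi\colonequals\Sigma_f$ has the required property.

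\emph{Uniqueness.} Suppose $\Sigma$ is any function satisfying the displayed identity for all $Q$ and all ordered $\lambda$. Every $S\in\Symn$ can be written as $Q^T\diag(\lambda)\,Q$ with $\lambda$ sorted decreasingly, by the spectral theorem with the eigenvalues arranged in decreasing order. Hence $\Sigma(S)$ is prescribed for every $S$, and any two such functions coincide. Uniqueness therefore holds even without the isotropy assumption.

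The only point needing care is well-definedness, which is why the hypothesis on $\phi$ matters. Suppose $Q^T\diag(\lambda)\,Q=R^T\diag(\lambda)\,R$ with the same ordered $\lambda$. Then $P\colonequals RQ^T$ commutes with $\diag(\lambda)$, so $P$ is block-diagonal with respect to the eigenspaces of equal eigenvalues. Since $\phi$ is constant on each block of equal $\lambda_i$, $\diag(\phi(\lambda))$ is constant on exactly those blocks, so $P$ also commutes with $\diag(\phi(\lambda))$. It follows that $Q^T\diag(\phi(\lambda))\,Q=R^T\diag(\phi(\lambda))\,R$. This argument is already contained in Lemma \ref{lemma:existenceAndUniquenessOfMatrixFunction} through $f$, so I would cite it there rather than repeat it.

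Isotropy of $\Sigma^\phi$ follows from the definition: $\Sigma^\phi(R^TQ^T\diag(\lambda)\,QR)=R^TQ^T\diag(\phi(\lambda))\,QR$ for any $R\in\On$.
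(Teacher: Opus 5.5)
Your proposal is correct and follows the paper's proof: extend $\phi$ to the unique symmetric $f$ with Lemma \ref{lemma:existenceOfsymmetricExpansion}, then take $\Sigma^\phi=\Sigma_f$ from Lemma \ref{lemma:existenceAndUniquenessOfMatrixFunction}. Your direct uniqueness argument, which uses the spectral theorem with decreasingly ordered eigenvalues, and your explicit well-definedness check only spell out what the paper's one-line proof leaves implicit.
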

\begin{proof}
Due to Lemma \ref{lemma:existenceAndUniquenessOfMatrixFunction}, it is sufficient to note that there exists a unique symmetric function $f\col\R^n\to\R^n$ such that $f(x)=\phi(x)$ for all $x\in\Rsort$ and we can choose $\Sigma^\phi =\Sigma_f$.
\end{proof}
Note that, compared to the representation by vector valued functions on $\R^n$, the requirement of symmetry is replaced by the simpler requirement $x_i=x_j\implies \phi_i(x)=\phi_j(x)$ for a function $\phi$ defined only on the set $\Rsort$ to induce an isotropic tensor-valued function on $\Symn$.

Based on our initial Definition \ref{def:monotonicity} of vector-monotonicity, we elaborate in the context of sorted values.
\begin{lemma}\label{lemma:vectorMonotonicityImpliesSameAlgebraicOrder}
	Let $f\col\R^n\to\R^n$ be symmetric and vector-monotone. Then, the vector $x\in\R^n$ and $f(x)\in\R^n$ are sorted in the same algebraic order:
	\begin{align}
		x_1\geq x_2\geq\dotsc\geq x_n\qquad\implies\qquad f_1(x)\geq f_2(x)\geq\dotsc\geq f_n(x)\,.
	\end{align} 
	If $f$ is strictly vector-monotone, then
	\begin{equation}
		x_i\neq x_j\qquad\implies\qquad(x_i-x_j)(f_i(x)-f_j(x))>0\qquad\forall\; x\in\R^n.
	\end{equation}
\end{lemma}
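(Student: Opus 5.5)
The plan is to compare $x$ with the vector $y$ obtained from $x$ by swapping two components. Fix $x\in\R^n$ and indices $i\neq j$. Let $\tau$ be the transposition exchanging $i$ and $j$, and set $y\colonequals(x_{\tau(k)})_k$. By the symmetry of $f$ (Definition \ref{definition:Symmetric}, applied with $\pi=\tau$) we have
\begin{equation*}
	f_k(y)=f_{\tau(k)}(x)\qquad\text{for all }k\,.
\end{equation*}
So $f(y)$ is $f(x)$ with the $i$-th and $j$-th entries exchanged. Only the components $i$ and $j$ of $x-y$ and of $f(x)-f(y)$ are nonzero, namely
\begin{equation*}
	(x-y)_i=x_i-x_j=-(x-y)_j\,,\qquad (f(x)-f(y))_i=f_i(x)-f_j(x)=-(f(x)-f(y))_j\,.
\end{equation*}

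Inserting this into the monotonicity inequality \eqref{eq:monotonicity2} gives
\begin{equation*}
	0\le\iprod{f(x)-f(y),x-y}=2\,(x_i-x_j)\bigl(f_i(x)-f_j(x)\bigr)\,.
\end{equation*}
Apply this to consecutive indices $i$ and $i+1$ of a sorted vector $x_1\ge\dotsc\ge x_n$. Whenever $x_i>x_{i+1}$, dividing by the positive factor gives $f_i(x)\ge f_{i+1}(x)$. When $x_i=x_{i+1}$, the product carries no information, so I use symmetry directly: with the transposition of $i$ and $i+1$ we get $y=x$, hence $f_i(x)=f_{i+1}(x)$. Chaining these consecutive inequalities yields $f_1(x)\ge\dotsc\ge f_n(x)$.

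For the strict statement, suppose $x_i\ne x_j$. Then $y\neq x$, so strict monotonicity makes the above inner product strictly positive, which gives $(x_i-x_j)(f_i(x)-f_j(x))>0$.

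The only delicate point is the tie case $x_i=x_j$, where the monotonicity inequality degenerates. It is settled by the symmetry of $f$ alone, as above, so no real obstacle remains.
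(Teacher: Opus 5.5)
Your proof is correct and follows the same route as the paper: swap the $i$-th and $j$-th components of $x$, use the symmetry of $f$ to get $\langle f(x)-f(y),\,x-y\rangle=2(x_i-x_j)(f_i(x)-f_j(x))$, and read off both the ordering and the strict statement. You also settle the tie case $x_i=x_j$ explicitly via symmetry ($y=x$ forces $f_i(x)=f_j(x)$), which the paper's write-up passes over: it concludes $f_i(x)\geq f_j(x)$ from $x_i\geq x_j$ directly from the product inequality, which says nothing when $x_i=x_j$.
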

\begin{proof}
	Let $i,j\in\{1,\dotsc,n\}$ be arbitrary indices with $i\neq j$. For any $x\in\R^n$ we choose $y\in\R^n$ as
	\begin{align}
		y_k=x_k\qquad\forall\;k\neq i,j\,,\qquad y_i=x_j\quad\text{and}\quad y_j=x_i\,.
	\end{align}
	Vector-monotonicity leads to
	\begin{align}
		\iprod{f(x)-f(y),\, x-y}=\sum_{k=1}^n(f_k(x)-f_k(y))(x_k-y_k)=2\.(f_i(x)-f_j(x))(x_i-x_j)\geq 0\,.
	\end{align}
	If $x_i\geq x_j$ then $f_i(x)\geq f_j(x)$ and if $x_i\leq x_j$ then $f_i(x)\leq f_j(x)$, for all indices $i\neq j$.
	Therefore, the vector $f(x)\in\R^n$ is sorted in the same algebraic order as $x\in\R^n$. Last, for all $x\in\R^n$ where $x_i\neq x_j$ we have $x\neq y$ and it holds
	\begin{equation}
		(x_i-x_j)(f_i(x)-f_j(x))=\frac12\.\iprod{f(x)-f(y),\, x-y}>0
	\end{equation}
	if $f$ is strictly vector-monotone.
\end{proof}
\begin{remark}
	As seen in \eqref{eq:definitionOfSigmaF}, for applications in isotropic nonlinear elasticity the vector function $f=(f_1,\dotsc,f_n)\col M\subset\R^n\to\R^n$ will often represent the vector field $\sigmahat$ of principal Cauchy stresses as functions of the principal stretches
	\begin{equation}
		\sigmahat\col\R_+^3\to\R^3,\qquad\sigmahat\col(\lambda_1,\lambda_2,\lambda_3)\mapsto(\sigmahat_1,\sigmahat_2,\sigmahat_3)\,.
	\end{equation}
	The corresponding matrix function will be the Cauchy stress tensor field $\sigma\col\Symp(3)\to\Sym(3)$. Through the isotropy of $\sigma$ we can take 
	\begin{equation}
		\sigma(V)=\sigma(Q^T\diag(\lambda_1,\lambda_2,\lambda_3)\.Q)=Q^T\sigma(\diag(\lambda_1,\lambda_2,\lambda_3))\.Q=Q^T\diag(\sigmahat(\lambda_1,\lambda_2,\lambda_3))\.Q\,,
	\end{equation}
	with $\lambda_1,\lambda_2,\lambda_3$ the eigenvalues of $V=Q^T\diag(\lambda_1,\lambda_2,\lambda_3)\.Q\,.$ 
\end{remark}
We will introduce a short well-known Lemma first mentioned in 1934 by G.\ H.\ Hardy \cite{hardy1952inequalities} as well as later found in Theobald \cite{theobald1975inequality} and Richter \cite{richter1958abschatzung}.
\begin{lemma}\label{lemma:hardy}
	Let $x,y\in\R^n$, then
	\begin{equation}
		\colorbox{dortmundgreen!50}{$\displaystyle\sum_{i=1}^n\sort{x}_i\.\sort{y}_i$}
		\geq\sum_{i=1}^nx_i\.y_i\geq
		\colorbox{red!30}{$\displaystyle\sum_{i=1}^n\sort{x}_i\.\sort{y}_{n+1-i}$}
		\,.\label{eq:hardy}
	\end{equation}
	If $x$ and $y$ are similarly ordered and have identical equality blocks, with strict ordering between distinct blocks, equality in the upper rearrangement inequality \eqref{eq:hardy} holds exactly for permutations preserving those blocks.
\end{lemma}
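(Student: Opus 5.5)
Both inequalities depend only on the multisets of entries of $x$ and $y$, not on how they are arranged. So I may assume without loss of generality that $x=\sort{x}$ is sorted decreasingly: relabel the indices by a permutation $\pi$ that sorts $x$ and apply the same $\pi$ to $y$. It then suffices to show that, among all arrangements $\sigma$ of the entries of $y$, the sum $\sum_i x_i\,y_{\sigma(i)}$ is largest when $y$ is sorted decreasingly and smallest when it is sorted increasingly.

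\textbf{Exchange argument for the upper bound.} Suppose $y$ is not decreasing, so there are indices $i<j$ with $y_i<y_j$. Let $y'$ be $y$ with $y_i$ and $y_j$ swapped. Then
\begin{equation*}
	\sum_k x_k\,y'_k-\sum_k x_k\,y_k=(x_i-x_j)(y_j-y_i)\geq 0\,,
\end{equation*}
since $x_i\geq x_j$. Each such swap strictly reduces the number of inversions of $y$. After finitely many swaps we reach $\sort{y}$, with the sum never decreasing, which gives $\sum_i\sort{x}_i\,\sort{y}_i\geq\sum_i x_i\,y_i$.

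\textbf{Lower bound.} Apply the upper bound to the pair $(x,-y)$. The decreasing sort of $-y$ has entries $-\sort{y}_{n+1-i}$, so
\begin{equation*}
	\sum_i \sort{x}_i\,\bigl(-\sort{y}_{n+1-i}\bigr)\geq -\sum_i x_i\,y_i\,,
\end{equation*}
which is the right-hand inequality after multiplying by $-1$.

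\textbf{Equality case.} This is the delicate step, because we must characterize all maximizers and not just show that the sorted one is a maximizer. Take $x=\sort{x}$ with equality blocks $B_1,\dots,B_m$ and strict decrease between blocks. The hypothesis says $y$ has the same blocks and is strictly decreasing between them. Consider a rearrangement $z=(y_{\sigma(k)})_k$ that achieves the maximum, and suppose $z$ is not constant on each block with the sorted value. Then some entry has been moved to a different block, so there are $i<j$ in different blocks with $z_i<z_j$. For these indices $x_i>x_j$ strictly, and the swap computation above gives a strict increase $(x_i-x_j)(z_j-z_i)>0$. This contradicts maximality.

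Hence any maximizer agrees with $\sort{y}$ block by block, meaning $\sigma$ maps each block to itself. Conversely, every permutation that preserves the blocks leaves the sum unchanged, because $y$ is constant on each block. The main obstacle is precisely this step: the strictness $x_i>x_j$ only holds across distinct blocks, so the argument has to ensure that a violating pair can always be chosen across blocks, which I would do by comparing block averages or directly by the first index where $z$ deviates from $\sort{y}$.
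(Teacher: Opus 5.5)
Your proposal is correct and is the classical interchange proof of the rearrangement inequality, which is the route of the Hardy--Littlewood--P\'olya reference the paper cites for this lemma; the paper gives no proof of its own. The reduction to sorted $x$, the inversion-reducing swaps, and the lower bound via $(x,-y)$ are all fine. The step you flag in the equality case is closed by your second suggestion, as follows. Let $i$ be the first index with $z_i\neq\sort{y}_i$, and let $i\in B_k$. Positions $\geq i$ carry the same multiset in $z$ as in $\sort{y}$, and its maximum is the block value $v_k=\sort{y}_i$, so $z_i<v_k$. By the strict ordering between blocks of $y$, $v_k$ occurs there exactly $|B_k\cap\{i,\dotsc,n\}|$ times, while position $i$ does not carry it in $z$. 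Hence some $j>i$ outside $B_k$, i.e.\ in a later block, has $z_j=v_k>z_i$ and $x_i>x_j$, which is the required cross-block pair.
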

G.\ H.\ Hardy also gave a very intuitive visualization:\footnote{Because of the associative law, it is sufficient to leave $x$ unchanged and only arrange the order of the vector $y$ so that it corresponds to the order of the vector $x$.}
\enquote{\textit{The theorem becomes 'intuitive' if we interpret the $x$ as distances along a rod to hooks and the $y$ as weights suspended from the hooks. To get the maximum statical moment with respect to an end of the rod, we hang the heaviest weights on the hooks farthest from that end.}} \cite[10.2]{hardy1952inequalities}, cf.\ Figure \ref{fig:Hardy}.
\begin{figure}[h!]
	\centering
	\begin{tikzpicture}[scale=1,
		weight/.style={draw, rounded corners=2pt, minimum width=0.65cm,
			minimum height=1.0cm, fill=white},
		every node/.style={font=\small}
		]
		\def\tilt{0.1}     
		\def\length{5.5}  
		\coordinate (P) at (0,0);
		
		\coordinate (L) at (-\length,{-\length*\tilt});
		\coordinate (R) at ( \length,{ \length*\tilt});
		\draw[line width=1.2pt] (L)--(R);
		\fill (P) circle (2.8pt);
		\node[above=3pt] at (P) {pivot};
		
		\draw[fill=gray!25]
		(-0.35,-2.2)--(0,-0.05)--(0.35,-2.2)--cycle;
		\draw[fill=gray!25]
		(-0.8,-2.2) rectangle (0.8,-2.5);
		
		\foreach \x/\xlabel/\ylabel/\kg/\farbe in
		{
			-5/x_1/y_1/3/dortmundgreen!50,
			-3.5/x_2/y_2/2/dortmundgreen!50,
			-2/x_3/y_3/1/dortmundgreen!50,
			2/x_3/y_1/3/red!30,
			3.5/x_2/y_2/2/red!30,
			5/x_1/y_3/1/red!30
		}{
			\pgfmathsetmacro\yy{\tilt*\x} 
			\fill (\x,\yy) circle (1.8pt); 
			\node[above=4pt] at (\x,\yy) {$\xlabel$}; 
			\draw (\x,\yy)--(\x,\yy-1.2); 
			\node[weight,align=center,fill=\farbe] (w) at (\x,\yy-1.5)
			{$\ylabel$\\$\kg\.$kg}; 
		}
		
		\node[align=center] at (-4,1.0)
		{
			{ \normalsize optimal ordering}\\[0.2em]
			$\matr{\text{heavier weights}\\\text{farther from pivot}}$
		};
		\node[align=center] at (4,1.6)
		{
			{ \normalsize sub-optimal ordering}\\[0.2em]
			$\matr{\text{heavier weights}\\\text{closer from pivot}}$
		};
		
		\draw[decorate,decoration={brace,mirror,amplitude=5pt}]
		(-5.4,-2.8)--(-1.4,-2.8);
		\node at (-3.4,-3.3) {larger moment};
		\draw[decorate,decoration={brace,mirror,amplitude=5pt}]
		(1.4,-2.8)--(5.4,-2.8);
		\node at (3.4,-3.3) {smaller moment};
		
		\draw[->,line width=1pt]
		(1.0,0.4) to[out=20,in=110] (-1.0,0.2);
		\node[align=center] at (0,1.1)
		{tips toward the side\\with larger moment};
		\node[align=center] at (0,-3)
		{$y_1\geq y_2\geq y_3$};
	\end{tikzpicture}
	\caption{Mechanical interpretation of Lemma \ref{lemma:hardy} with $x_i$
		as distances from the pivot and $y_i$ as suspended weights. Placing the largest weights at the largest distances maximizes the total moment $\sum_i x_i y_i$.}\label{fig:Hardy}
\end{figure}

%
%
%
%
\subsection{Hill's proof in the potential case}
\begin{remark}
	The main goal of this work is to prove the reversal of Lemma \ref{lemma:matrixMonotonicityImpliesVectorMonotonicity}, i.e.\ that vector-monotonicity automatically implies matrix-monotonicity. The most natural approach to prove matrix-monotonicity would be to show the inequality chain
		\begin{align}
			\iprod{\Sigma_f(S)-\Sigma_f(T),\, S-T}\geq\iprod{f(x)-f(y),\,x-y}\geq 0\,.\label{eq:ineqChain}
		\end{align}
	which does not hold in general, see for more detail Appendix \ref{appendix:OriginalProofHill}.
\end{remark}
Nevertheless, the first observation that vector-monotonicity might imply matrix-monotonicity in nonlinear isotropic elasticity is due to \citet{hill1968constitutive}, independent of the earlier paper by \citet{davis1957all} on convexity.
\begin{remark}\label{remark:HillPotential}
	\textbf{[Hill's proof for the differentiable potential case]}\\
	We briefly review the correct proof from \citet[p.238 eq.(25)]{hill1968constitutive} for the potential case. We assume that $\sigma=\D_\eps W(\eps)$ with an arbitrary isotropic convex differentiable potential $W\col\Sym(n)\to\R\,.$  As seen we can represent any isotropic energy potential $W(\eps)=\fpot(\eps_1,\dotsc,\eps_n)=\fpot(\eps_i)\,.$ Then $\sigma_i(\eps_1,\dotsc,\eps_n)=\partial_{\eps_i}\fpot(\eps_1,\dotsc,\eps_n)\,.$
	
	 We sketch Hill's proof of the result that $\fpot\col\R^n\to\R$ being a convex symmetric function implies that $W\col\Sym(n)\to\R$ is convex, too. For convexity of a differentiable potential, we have to show that
	\begin{equation}
		W(\overline\eps)-W(\eps)\geq\iprod{\D W(\eps),\overline\eps-\eps}
	\end{equation}
	holds\footnote{Here, the tensors $\eps$ and $\overline\eps$ are representatives of some nonlinear strain tensors, i.e.\ $\eps=\log V$ or $\eps=V-\id\,.$} for all $\eps,\overline\eps\in\Sym(n)\,.$ We start by writing
	\begin{align}
		W(\overline\eps)-W(\eps)=\fpot(\overline\eps_i)-\fpot(\eps_i)=\fpot({\widetilde\eps}_i)-\fpot(\eps_i)\,,\label{eq:IsotropyEigenvaluesSort}
	\end{align}
	with ${\widetilde\eps}_i$ being the eigenvalues $\overline\eps_i\,,$ but sorted in the same algebraic order as $\eps_i$.
	For $\fpot(\overline\eps_i)=\fpot({\widetilde\eps}_i)$ in \eqref{eq:IsotropyEigenvaluesSort} we take advantage of the isotropy of $W$ which implies that $\fpot$ is permutation invariant (symmetric).\footnote{Hill actually starts by assuming $\eps$ and $\overline\eps$ being given in the same algebraic order, \citet[p.238, last sentence]{hill1968constitutive}. Therefore, his proof is, strictly speaking, only valid for that case.} The vector convexity of $\fpot(\eps_i)$ leads then to
	\begin{equation}
		\fpot({\widetilde\eps}_i)-\fpot(\eps_i)\geq\sum_{i=1}^n\partial_{\eps_i}\fpot(\eps_i)({\widetilde\eps}_i-\eps_i)
		=\sum_{i=1}^n\partial_{\eps_i}\fpot(\eps_i)\.{\widetilde\eps}_i-\sum_{i=1}^n\partial_{\eps_i}\fpot(\eps_i)\.\eps_i=\sum_{i=1}^n\partial_{\eps_i}\fpot(\eps_i)\.{\widetilde\eps}_i-\iprod{\D W(\eps),\eps}\,.
	\end{equation}
	Next, we use Lemma \ref{lemma:hardy} to estimate\footnote{With $\sigma=\D_\eps W(\eps)$ and $\sigma_i=\partial_{\eps_i}\fpot(\eps_i)$, we can write $\displaystyle\iprod{\sigma,\overline\eps}\leq\sum_{i=1}^n\sort\sigma_i\.\sort{\overline\eps}_i=\sum_{i=1}^n\sigma_i\.{\widetilde\eps}_i$, cf.\ Appendix \ref{appendix:ImprovedInterpretationHill}.} \cite{hardy1952inequalities, theobald1975inequality, richter1958abschatzung}
	\begin{align}
		\sum_{i=1}^n\partial_{\eps_i}\fpot(\eps_i)\.{\widetilde\eps}_i\geq\iprod{\D W(\eps),\overline\eps}\,,\label{eq:HillPotential1}
	\end{align}
	for which we have to show that $\partial_{\eps_i}\fpot(\eps_i)=:\sigma_i$ and ${\widetilde\eps}_i$ are sorted in the same algebraic order. Because ${\widetilde\eps}_i$ and $\eps_i$ are ordered in the same algebraic order due the definition of ${\widetilde\eps}_i$, it is sufficient to show that $\partial_{\eps_i}\fpot(\eps_i)=:\sigma_i$ and $\eps_i$ are sorted in the same algebraic order, see Lemma \ref{lemma:vectorMonotonicityImpliesSameAlgebraicOrder}. To this end, Hill uses the monotonicity of $\sigma_i(\eps_i)=\partial_{\eps_i}\fpot(\eps_i)$ due to the convexity of $\fpot$ which implies
	\begin{align}
		\sum_{i=1}^n(\sigma_i'-\sigma_i)(\eps_i'-\eps_i)=\sum_{i=1}^n(\partial_{\eps_i}\fpot(\eps_i')-\partial_{\eps_i}\fpot(\eps_i))(\eps_i'-\eps_i)\geq 0\label{eq:SigmaAndEpsSortedInTheSameOrder}
	\end{align}
	where $\eps_i'$ is an arbitrary permutation from $\eps_i$ by simply exchanging two specific indices. For instance, with $(\eps_1',\eps_2',\eps_3')=(\eps_2,\eps_1,\eps_3)$, we get
	\begin{align}
		2\.(\sigma_2-\sigma_1)(\eps_2-\eps_1)\geq0\,.\label{eq:SigmaAndEpsSortedInTheSameOrder2}
	\end{align}
	Therefore $\sigma_i$ and $\eps_i$ are ordered in the same way. By construction in \eqref{eq:IsotropyEigenvaluesSort}, $\eps_i$ and $\widetilde\eps_i$ are already sorted in the same algebraic order, therefore \eqref{eq:HillPotential1} holds. 
	Finally we get
	\begin{align}
		W(\overline\eps)-W(\eps)&=\fpot(\overline\eps_i)-\fpot(\eps_i)=\fpot({\widetilde\eps}_i)-\fpot(\eps_i)\notag\\
		&\geq\sum_{i=1}^n\partial_{\eps_i}\fpot(\eps_i)({\widetilde\eps}_i-\eps_i)=\sum_{i=1}^n\partial_{\eps_i}\fpot(\eps_i)\.{\widetilde\eps}_i-\iprod{\D W(\eps),\eps}\\
		&\geq\iprod{\D W(\eps),\overline\eps}-\iprod{\D W(\eps),\eps}=\iprod{\D W(\eps),\overline\eps-\eps}\notag
	\end{align}
	for arbitrary $\eps,\overline\eps\in\Sym(n).$ With regard to our monotonicity with $\sigma_i=\partial_{\eps_i}\fpot(\eps_i)\,,\ \sigma(\eps)=\D_\eps W(\eps)$ the result is equivalent to:
	\begin{framed}
	The potential case $\sigma_i=\partial_{\eps_i}\fpot(\eps_i)\,,\qquad\sigma(\eps)=\D_\eps W(\eps)$ for differentiable functions $W\col\Sym(n)\to\R$
		\begin{align*}
			&\text{vector-monotonicity:}&\sum_{i=1}^n(\sigma_i-\overline\sigma_i)(\eps_i-\overline\eps_i)=\sum_{i=1}^n(\partial_{\eps_i}\fpot(\eps_i)-\partial_{\eps_i}\fpot(\overline\eps_i))(\eps_i-\overline\eps_i)&\geq 0\\
			\iff\qquad&\text{matrix-monotonicity:}&\iprod{\sigma-\overline\sigma,\eps-\overline\eps}=\iprod{\D_\eps W(\eps)-\D_\eps W(\overline\eps),\,\eps-\overline\eps}&\geq 0\,.
		\end{align*}
	\end{framed}
	The alternative result for a nondifferentiable scalar-valued function $W$ can be found in the earlier paper by \citet[and \ref{theorem:DavisLewis} in this paper]{davis1957all} and later in \citet[p.365, Theorem 5.1]{ball1976convexity}.
\end{remark}
\begin{remark}\label{remark:OriginalProofHill}
	Hill \cite{hill1970constitutive} extends his considerations to so-called conjugate stress-strain pairs which, however, exclude the Cauchy stress.\footnote{\citet[p.464]{hill1970constitutive}: \enquote{Of course, stress tensors can be defined which are not conjugate to any strain measure in the present sense. One such is Cauchy stress for a compressible solid,...}} For the conjugate stress-strain case [Ogden \cite{ogden1997non}, p.158] Hill proposes two independent arguments.
	
	His first argument is based on his method of rotated Lagrangian axes. This method critically uses the concept of conjugate stress-strain pairs. Therein, Hill claims that it is sufficient to consider only coaxial stretches $\eps$ and $\overline\eps$, whose eigenvalues are given in the same algebraic order \cite[Hill, p.466, second sentence]{hill1970constitutive}. Since this argument is both restricted to the special case of conjugate stress-strain pairs, ordered eigenvalues and differentiability of the stresses and not truly transparent, we focus on his second, purely algebraic argument, which deserves detailed attention and which is treated in the Appendix \ref{appendix:OriginalProofHill}
\end{remark}

%
%
%
%
\subsection{Permutation matrices}
We want to give our initial proof for the equivalence of vector-monotonicity with matrix-monotonicity in the general non-potential case, independently of our improved interpretation of Hill's proof, cf.\ Appendix \ref{appendix:ImprovedInterpretationHill}.

First, we need to introduce some properties of permutation matrices.
\begin{lemma}
\label{lemma:vectorMonotonicityImpliesDiagonalMonotonicity}
	Let $f\col M\subset\R^n\to\R^n$ be a vector-monotone symmetric function. Then
	\begin{equation}
		\iprod{\Sigma_f(D_1)-\Sigma_f(D_2),\,D_1-D_2} \geq 0
	\end{equation}
	for all \emph{diagonal} matrices $D_1,D_2\in\Sym(M)\subset\Symn$. If $f$ is strictly vector-monotone, then the inequality is strict.
\end{lemma}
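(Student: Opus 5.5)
The plan is to reduce the statement directly to the definition of vector-monotonicity by computing $\Sigma_f$ on diagonal matrices. First write $D_1=\diag(x)$ and $D_2=\diag(y)$ with $x,y\in\R^n$. Because $D_1,D_2\in\Sym(M)$ and the eigenvalues of a diagonal matrix are its diagonal entries (in some order), we have $x,y\in M$; here we use that $M$ is a symmetric set, so the particular ordering of the diagonal does not matter.

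Second, show that $\Sigma_f(\diag(x))=\diag(f(x))$. This follows from Lemma \ref{lemma:existenceAndUniquenessOfMatrixFunction} with the choice $Q=\id\in\On$ in the defining representation. The only point that needs care is well-definedness: we must not obtain a different value through another diagonalization with a permuted diagonal. That concern is already handled by the uniqueness part of the lemma, which rests on the symmetry of $f$.

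Third, compute the Frobenius inner product of two diagonal matrices:
\begin{equation*}
	\iprod{\Sigma_f(D_1)-\Sigma_f(D_2),\,D_1-D_2}=\iprod{\diag(f(x)-f(y)),\,\diag(x-y)}=\sum_{i=1}^n\bigl(f_i(x)-f_i(y)\bigr)(x_i-y_i)=\iprod{f(x)-f(y),\,x-y}\,.
\end{equation*}
The right-hand side is nonnegative by vector-monotonicity of $f$. For the strict version, note that $D_1\neq D_2$ if and only if $x\neq y$, so strict vector-monotonicity gives strict positivity. The inequality in the lemma should therefore be read as strict for $D_1\neq D_2$.

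None of these steps is genuinely hard. This is essentially the computation in the proof of Lemma \ref{lemma:matrixMonotonicityImpliesVectorMonotonicity} read in the reverse direction. The only subtle point is the identification $\Sigma_f(\diag(x))=\diag(f(x))$ when $x$ is unsorted or has repeated entries, and that rests entirely on the symmetry of $f$ through Lemma \ref{lemma:existenceAndUniquenessOfMatrixFunction}.
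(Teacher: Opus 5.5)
Your proposal is correct and matches the paper's proof: writing $D_1=\diag(x)$ and $D_2=\diag(y)$ with $x,y\in M$ and using $\Sigma_f(\diag(x))=\diag(f(x))$ turns the Frobenius pairing into $\iprod{f(x)-f(y),\,x-y}\geq 0$, and strictness follows for $x\neq y$, equivalently $D_1\neq D_2$. Your remark that the strict inequality has to be read for $D_1\neq D_2$ is the same qualification the paper makes.
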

\begin{proof}
	Choose $x,y\in M\subset\R^n$ such that $D_1=\diag(x)$ and $D_2=\diag(y)$. Then
	\begin{equation}
		\iprod{\Sigma_f(D_1)-\Sigma_f(D_2),\,D_1-D_2} = \iprod{f(x)-f(y),\,x-y} \geq 0\,.
	\end{equation}
	If $f$ is strictly vector-monotone, then the inequality is strict for $x\neq y$, equivalently, for $D_1\neq D_2$.
\end{proof}
\begin{definition}[Doubly stochastic matrix]
	A matrix $P\in\Rnn$ is called \emph{doubly stochastic} if the following conditions are satisfied:
	\begin{itemize}
		\item[i)] $P_{ij}\geq0$ for all $i,j\in\{1,\dotsc,n\}$\,,
		\item[ii)] $\sum_{k=1}^n P_{kj}=1$ for all $j\in\{1,\dotsc,n\}$\,,
		\item[iii)] $\sum_{k=1}^n P_{ik}=1$ for all $i\in\{1,\dotsc,n\}$\,.
	\end{itemize}
\end{definition}
\begin{lemma}
\label{lemma:componentwiseSquaredOrthogonalMatrixIsDoublyStochastic}
	Let $Q\in\On$. Then the matrix $P\in\Rnn$ with
	\begin{equation}
		P_{ij} \coloneqq (Q_{ij})^2
	\end{equation}
	for all $i,j\in\{1,\dotsc,n\}$ is doubly stochastic.\footnote{Note carefully that $P_{ij}\neq (Q^2)_{ij}$ in general.}
\end{lemma}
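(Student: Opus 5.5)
The plan is to verify the three defining conditions of a doubly stochastic matrix one at a time, using only the defining identities of an orthogonal matrix, namely $Q^TQ=\id$ and $QQ^T=\id$. Both identities hold for every $Q\in\On$: one is the definition, and the other follows because a one-sided inverse of a square matrix is two-sided.

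Condition i) is immediate, since $P_{ij}=(Q_{ij})^2\geq 0$ as the square of a real number.

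For condition ii), fix a column index $j$. The column sum is
\begin{equation*}
	\sum_{k=1}^n P_{kj}=\sum_{k=1}^n Q_{kj}\.Q_{kj}=(Q^TQ)_{jj}=1\,.
\end{equation*}
Equivalently, it is the squared Euclidean norm of the $j$-th column of $Q$, and the columns of $Q$ form an orthonormal basis.

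For condition iii), fix a row index $i$. The row sum is
\begin{equation*}
	\sum_{k=1}^n P_{ik}=\sum_{k=1}^n Q_{ik}\.Q_{ik}=(QQ^T)_{ii}=1\,.
\end{equation*}
This is the squared norm of the $i$-th row of $Q$.

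There is no real obstacle. The only point needing care is that the row condition uses $QQ^T=\id$ rather than $Q^TQ=\id$, so both identities must be invoked. As the footnote warns, the entrywise square $P_{ij}=(Q_{ij})^2$ must not be confused with the matrix power $(Q^2)_{ij}$. The diagonal entries of $Q^TQ$ and $QQ^T$ are exactly sums of entrywise squares, which is why the computation works.
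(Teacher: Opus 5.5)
Your proof is correct: nonnegativity is immediate, and the column and row sums are the diagonal entries of $Q^TQ$ and $QQ^T$, both of which equal the identity for orthogonal $Q$. The paper states this lemma without proof, treating it as evident, and your argument is exactly the standard verification it leaves implicit.
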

\begin{example}
	For the rotation matrices for $n=2$, the resulting doubly stochastic matrices have the form
	\begin{equation}
		Q=\matr{\cos\alpha&-\sin\alpha\\\sin\alpha&\cos\alpha},\qquad P=\matr{(\cos\alpha)^2&(\sin\alpha)^2\\(\sin\alpha)^2&(\cos\alpha)^2}
	\end{equation}
	with an arbitrary rotation angle $\alpha\in\R$ and $(\cos\alpha)^2+(\sin\alpha)^2=1$.
\end{example}
\begin{definition}[Permutation matrix]
	A matrix $P\in\Rnn$ is called a \emph{permutation matrix} if $P_{ij}\in\{0,1\}$ for all $i,j\in\{1,\dotsc,n\}$ and $P$ is doubly stochastic (and thus $P\in\On$). Equivalently, $P$ has exactly one entry equal to 1 in each row and column.
	
	A matrix $P\in\Rnn$ is called a \emph{signed permutation matrix} if $P_{ij}\in\{-1,0,1\}$, for all $i,j\in\{1,\dotsc,n\}$ and it has exactly one nonzero entry $\pm1$ in each row and column. Hence, the corresponding \emph{permutation matrix} $\Phat\in\Rnn$ with $\Phat_{ij}\coloneq |P_{ij}|$ for all $i,j\in\{1,\dotsc,n\}$ is doubly stochastic.
\end{definition}
\begin{lemma}
\label{lemma:permutationMatrixAsSignedSquare}
	Each permutation matrix $P\in\Rnn$ can be written as $P_{ij}=(Q_{ij})^2$ for all $i,j\in\{1,\dotsc,n\}$ with a signed permutation matrix $Q\in\SOn$.
\end{lemma}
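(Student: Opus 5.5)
The plan is to build the signed permutation matrix $Q$ explicitly from the permutation underlying $P$, and then repair the determinant if necessary.

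Since $P$ is a permutation matrix, there is a permutation $\pi\col\{1,\dotsc,n\}\to\{1,\dotsc,n\}$ with $P_{ij}=1$ if $j=\pi(i)$ and $P_{ij}=0$ otherwise. Entrywise squaring fixes the values $0$ and $1$, and it maps $-1$ to $1$. Hence any matrix $Q$ with $Q_{ij}=\varepsilon_i\in\{-1,1\}$ for $j=\pi(i)$ and $Q_{ij}=0$ otherwise satisfies $(Q_{ij})^2=P_{ij}$.

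Each such $Q$ is a signed permutation matrix and therefore orthogonal. Indeed, $Q=\diag(\varepsilon_1,\dotsc,\varepsilon_n)\,P$, and $P\in\On$ as noted in the definition of permutation matrices. Its determinant is
\begin{equation*}
	\det Q=\Big(\prod_{i=1}^n\varepsilon_i\Big)\det P=\Big(\prod_{i=1}^n\varepsilon_i\Big)\operatorname{sgn}(\pi)\in\{-1,1\}\,.
\end{equation*}

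The only real point is to arrange $\det Q=+1$. I would first take all $\varepsilon_i=1$, which gives $Q=P$. If $\det P=1$ we are done. If $\det P=-1$, I flip a single sign, say $\varepsilon_1=-1$. This multiplies the determinant by $-1$, so $\det Q=1$, while leaving the squared entries unchanged. This works for every $n\geq1$. For $n=1$ it gives $Q=(1)$ directly, since $P=(1)$ has determinant $1$.

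Therefore $Q\in\SOn$ is a signed permutation matrix with $(Q_{ij})^2=P_{ij}$, as claimed. There is no real obstacle here: the sign-flip step is the only non-trivial observation, and it relies only on multilinearity of the determinant in the rows.
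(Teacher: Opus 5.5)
Your proposal is correct and follows essentially the same route as the paper: put signs $\pm1$ on the nonzero entries of $P$, which already gives an orthogonal signed permutation matrix $Q$ with $(Q_{ij})^2=P_{ij}$, and if $\det Q=-1$ flip the sign of a single row to land in $\mathrm{SO}(n)$. Your explicit factorization $Q=\diag(\varepsilon_1,\dotsc,\varepsilon_n)\,P$ makes the orthogonality and determinant bookkeeping slightly more transparent than in the paper, but the argument is the same.
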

\begin{proof}
	Let $Q\in\Rnn$ with $Q_{ij}=0$ if $P_{ij}=0$ and $Q_{ij}\in\{-1,1\}$ if $P_{ij}=1\,.$ Then
	\begin{equation}
			\abs{Q_{ij}}=(Q_{ij})^2=P_{ij}\,.\label{eq:permutationMatrixAsSignedSquare}
	\end{equation}
	Thus $Q$ is a signed permutation matrix by construction and therefore belongs in $\On$. If $\det Q=-1$, changing the sign of one row (or column) yields another signed permutation matrix satisfying the same identity \eqref{eq:permutationMatrixAsSignedSquare} and having determinant $1$. Hence, we can always choose $Q\in\SOn$.
\end{proof}
\begin{lemma}
\label{lemma:permutationMatrixPreservesDiagonalForm}
	Let $Q\in\On$ be a signed permutation matrix. Then $Q^TDQ$ is diagonal for any diagonal matrix $D\in\Symn$.
\end{lemma}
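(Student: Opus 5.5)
The plan is to compute the entries of $Q^TDQ$ directly from the structure of a signed permutation matrix. Write $D=\diag(d_1,\dotsc,d_n)$. Since $Q$ is a signed permutation matrix, there is a permutation $\sigma\col\{1,\dotsc,n\}\to\{1,\dotsc,n\}$ and signs $s_1,\dotsc,s_n\in\{-1,1\}$ such that $Q_{ij}=s_i\,\delta_{j\sigma(i)}$. In other words, row $i$ has its unique nonzero entry $s_i$ in column $\sigma(i)$. This is exactly the defining property that each row and each column contains precisely one entry $\pm1$.

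Next I would expand the product entrywise:
\begin{equation*}
	(Q^TDQ)_{jk}=\sum_{i=1}^n Q_{ij}\,d_i\,Q_{ik}=\sum_{i=1}^n s_i^2\,d_i\,\delta_{j\sigma(i)}\delta_{k\sigma(i)}\,.
\end{equation*}
The product $\delta_{j\sigma(i)}\delta_{k\sigma(i)}$ vanishes unless $j=k=\sigma(i)$. Hence every off-diagonal entry with $j\neq k$ is zero. For the diagonal entries, $s_i^2=1$ gives $(Q^TDQ)_{jj}=d_{\sigma^{-1}(j)}$.

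Therefore $Q^TDQ=\diag(d_{\sigma^{-1}(1)},\dotsc,d_{\sigma^{-1}(n)})$. This is diagonal, and its diagonal is a permutation of the diagonal of $D$. I would record this explicit form as well, because the subsequent argument will likely use that conjugation by a signed permutation matrix only permutes eigenvalues.

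There is no real obstacle here. The only point needing care is the index bookkeeping, in particular stating clearly that each column also contains exactly one nonzero entry, so that $\sigma$ is indeed a bijection.
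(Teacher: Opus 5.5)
Your proof is correct and follows essentially the same route as the paper: you expand $(Q^TDQ)_{jk}=\sum_i d_i\,Q_{ij}Q_{ik}$ and use that each row of $Q$ has exactly one nonzero entry, so $Q_{ij}Q_{ik}=0$ for $j\neq k$. Your explicit identification of the diagonal as $(d_{\sigma^{-1}(1)},\dotsc,d_{\sigma^{-1}(n)})$ is correct and is a small addition the paper does not record.
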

\begin{proof}
	For a diagonal matrix $D=\diag(d_1,\dotsc,d_n)$, we compute
	\begin{align}
		(Q^TD Q)_{ij} = \sum_{k,l=1}^n (Q^T)_{ik}\. D_{kl}\. Q_{lj} = \sum_{k=1}^n (Q^T)_{ik}\. D_{kk}\. Q_{kj} = \sum_{k=1}^n d_k \. Q_{ki}\. Q_{kj}\,.
	\end{align}
	$Q$ is a signed permutation matrix and therefore $\abs Q$ is doubly stochastic:
	\begin{align}
		&\sum_{k=1}^n |Q_{kj}|=1\quad\text{and}\quad Q_{kj}\in\{-1,0,1\}\\
		\implies\qquad&\exists\,\jhat\in\{1,\dotsc,n\}:\qquad|Q_{k\jhat}|=1\qquad\text{and}\qquad Q_{kj}=0\quad\forall\; j\neq\jhat\,.\notag
	\end{align}
	In particular, for $i \neq j$, no row $k$ can contain non-zero entries simultaneously in columns $i$ and $j$; hence
	\begin{align}
		Q_{ki}\. Q_{kj}&=0\qquad\forall\; i\neq j\\
		\implies\qquad(Q^TD\. Q)_{ij}=\sum_{k=1}^n d_k \. Q_{ki}\. Q_{kj}&=0\qquad\forall\; i\neq j\,.\notag\qedhere
	\end{align}
\end{proof}
\begin{theorem}[Birkhoff–von-Neumann theorem]
\label{theorem:birkhoff-von-neumann}
	The set $B_n\subset\Rnn$ of all doubly stochastic $n\times n$-matrices is the convex hull of the set of all permutation matrices, and the permutation matrices are exactly the vertices of $B_n$.
\end{theorem}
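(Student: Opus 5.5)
The plan is to prove the two inclusions $\operatorname{conv}\{\text{permutation matrices}\}\subset B_n$ and $B_n\subset\operatorname{conv}\{\text{permutation matrices}\}$, and then to identify the extreme points. The first inclusion is immediate. Every permutation matrix satisfies conditions i)–iii) of the definition of a doubly stochastic matrix. Moreover, $B_n$ is convex, since it is cut out by the linear inequalities $P_{ij}\geq0$ and the linear equalities for the row and column sums. Hence $B_n$ contains the convex hull of the permutation matrices.

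For the reverse inclusion I would argue by induction on the number $N(P)$ of strictly positive entries of $P\in B_n$. Every row of $P$ has at least one positive entry, so $N(P)\geq n$. If $N(P)=n$, each row and each column contains exactly one positive entry, which must equal $1$, so $P$ is a permutation matrix. In general, the key step is to show that there is a permutation $\pi$ with $P_{i\pi(i)}>0$ for all $i$. Given such a $\pi$, with permutation matrix $P_\pi$, put $\theta\colonequals\min_i P_{i\pi(i)}\in(0,1]$.
\begin{itemize}
\item If $\theta=1$, then $P=P_\pi$ and we are done.
\item Otherwise $P'\colonequals (P-\theta P_\pi)/(1-\theta)$ is again doubly stochastic, and it has at least one fewer positive entry than $P$.
\end{itemize}
By the induction hypothesis $P'$ is a convex combination of permutation matrices. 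Then $P=\theta P_\pi+(1-\theta)P'$ is one as well.

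The main obstacle is the existence of the positive diagonal $\pi$. I would obtain it from Hall's marriage theorem applied to the bipartite graph on rows and columns, with an edge $(i,j)$ whenever $P_{ij}>0$. To verify Hall's condition, take any set $I$ of $k$ rows and let $J$ be the set of columns in which these rows have a positive entry. Summing the entries of $P$ over $I\times J$ gives exactly $k$, because each row in $I$ has all of its mass in $J$. The same sum is at most $\sum_{j\in J}\sum_{i=1}^n P_{ij}=|J|$, because all column sums equal $1$. Hence $|J|\geq k$, and Hall's theorem gives a perfect matching, which is the required $\pi$.

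Finally, the vertices. Suppose a permutation matrix satisfies $P=tA+(1-t)B$ with $A,B\in B_n$ and $t\in(0,1)$. Nonnegativity forces $A_{ij}=B_{ij}=0$ wherever $P_{ij}=0$. The row sums then force the single remaining entry in each row to equal $1$, so $A=B=P$, and $P$ is extreme. Conversely, any $P\in B_n$ that is not a permutation matrix has $N(P)>n$, so $\theta<1$ in the construction above. This writes $P$ as a nontrivial convex combination of $P_\pi$ and $P'\neq P_\pi$, so such a $P$ is not a vertex. Therefore the vertices of $B_n$ are exactly the permutation matrices.
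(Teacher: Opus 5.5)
Your proof is correct, and it takes a different route from the paper only in the sense that the paper gives no proof at all: it cites Birkhoff and von Neumann, the latter treating the result in the setting of the optimal assignment problem. Your argument is the standard combinatorial proof, essentially the route of Birkhoff's original note, where the positive diagonal comes from König's theorem. All steps check out:
\begin{itemize}
\item Hall's condition follows cleanly from the row and column sums.
\item The peeled matrix $P'$ is doubly stochastic, because $P_{i\pi(i)}\geq\theta$ and all row and column sums of $P-\theta P_\pi$ equal $1-\theta$.
\item $P'$ loses the entry where the minimum $\theta$ is attained and gains no new positive entries.
\item The one step you leave implicit, $P'\neq P_\pi$, is immediate, since otherwise $P=P_\pi$.
\end{itemize}

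In the context of this paper, your route buys self-containedness, and it delivers the theorem in exactly the form in which it is used. The peeling yields an explicit finite decomposition $P=\sum_\pi c_\pi P_\pi$ with strictly positive weights and at most $N(P)-n+1$ terms. This is precisely the decomposition \eqref{eq:BirkhoffNeumannConvexCombination} invoked in the proof of Lemma~\ref{lemma:mainLemma}~ii).

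Your decomposition also gives the ``maximum at a vertex'' step of Lemma~\ref{lemma:mainLemma}~i) directly. For the linear functional $L$ one has $L(P)=\sum_\pi c_\pi L(P_\pi)\leq\max_\pi L(P_\pi)$, so the paper does not need to appeal to a general principle about maxima of convex functions on compact convex sets. In that sense, the vertex characterization is more than the paper actually needs; the positive-weight decomposition alone suffices.
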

\begin{proof}
	See \citet{birkhoff1946three} or von \citet{von1953certain}.
\end{proof}

In particular, the Birkhoff–von-Neumann Theorem implies that every doubly stochastic matrix $P\in B_n$ can be written by a convex combination 
\begin{equation}
	P=\sum_{\pi=1}^m c_\pi\.P_\pi\qquad\text{with}\quad\sum_{\pi=1}^m c_\pi=1\,,\label{eq:BirkhoffNeumannConvexCombination}
\end{equation}
where $P_\pi\in\Rnn$ are permutation matrices and $c_\pi>0$.

%
%
%
\subsection{Equivalence of vector-monotonicity and matrix-monotonicity}

We can now state our main result, which is independent of the framework of nonlinear elasticity:
\begin{theorem}\label{theorem:mainResult}
	A symmetric function $f\col M\subset\R^n\to\R^n$ is (strictly) vector-monotone if and only if it is (strictly) matrix-monotone.
\end{theorem}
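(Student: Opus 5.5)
The implication \enquote{matrix-monotone $\Rightarrow$ vector-monotone} is Lemma~\ref{lemma:matrixMonotonicityImpliesVectorMonotonicity}, so only the converse needs proof. The plan is to reduce the matrix inner product to an affine function of a doubly stochastic matrix. Then the Birkhoff--von-Neumann theorem (Theorem~\ref{theorem:birkhoff-von-neumann}) reduces everything to permutation matrices. For those, the inner product collapses to a vector-monotonicity expression, by Lemma~\ref{lemma:permutationMatrixPreservesDiagonalForm} and the symmetry of $f$.

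Let $S,T\in\Sym(M)$. By isotropy of $\Sigma_f$ and invariance of $\iprod{\cdot,\cdot}$ under $A\mapsto Q^TAQ$, we may assume $S=\diag(x)$ and $T=O^T\diag(y)\.O$ with $x,y\in M$ and $O\in\On$. Expanding, and using $\Sigma_f(T)=O^T\diag(f(y))\.O$, gives
\begin{equation*}
	\iprod{\Sigma_f(S)-\Sigma_f(T),S-T}=\iprod{f(x),x}+\iprod{f(y),y}-\iprod{\diag(f(x)),O^T\diag(y)\.O}-\iprod{\diag(x),O^T\diag(f(y))\.O}\,.
\end{equation*}
Since $(O^T\diag(y)\.O)_{ii}=\sum_k O_{ki}^2\,y_k$, this equals
\begin{equation*}
	F(P)\colonequals\iprod{f(x),x}+\iprod{f(y),y}-f(x)^TPy-x^TPf(y)\,,\qquad P_{ik}\colonequals O_{ki}^2\,.
\end{equation*}
By Lemma~\ref{lemma:componentwiseSquaredOrthogonalMatrixIsDoublyStochastic}, $P$ is doubly stochastic.

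$F$ is affine in $P$. Writing $P=\sum_\pi c_\pi P_\pi$ as in \eqref{eq:BirkhoffNeumannConvexCombination}, with $c_\pi>0$ and $\sum_\pi c_\pi=1$, gives $F(P)=\sum_\pi c_\pi F(P_\pi)$. For a permutation matrix $P_\pi$ we have $P_\pi y=y_\pi\colonequals(y_{\pi(i)})_i$. The symmetry of $f$ gives $P_\pi f(y)=f(y_\pi)$ and $\iprod{f(y_\pi),y_\pi}=\iprod{f(y),y}$. Hence
\begin{equation*}
	F(P_\pi)=\iprod{f(x)-f(y_\pi),\,x-y_\pi}\geq 0\,,
\end{equation*}
by vector-monotonicity, since $y_\pi\in M$ because $M$ is a symmetric set. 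Thus $F(P)\geq0$, which is matrix-monotonicity. The main care needed here is bookkeeping of indices (whether $P$ or $P^T$ appears, and $\pi$ versus $\pi^{-1}$). The argument is insensitive to this, because $P^T$ is also doubly stochastic.

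The step I expect to be the real obstacle is \emph{strictness}, since several decompositions and orderings are possible. Suppose $f$ is strictly vector-monotone and $F(P)=0$. Every term $c_\pi F(P_\pi)$ is nonnegative with $c_\pi>0$, so $F(P_\pi)=0$ for each $\pi$ in the decomposition. Strictness then forces $y_\pi=x$ for all these $\pi$, and so $Py=\sum_\pi c_\pi y_\pi=x$. Now compute directly
\begin{equation*}
	\norm{S-T}^2=\norm{x}^2+\norm{y}^2-2\,x^TPy=2\norm{x}^2-2\norm{x}^2=0\,,
\end{equation*}
using $\norm{y}=\norm{y_\pi}=\norm{x}$. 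Hence $S=T$, which proves strict matrix-monotonicity. The converse strict statement is again Lemma~\ref{lemma:matrixMonotonicityImpliesVectorMonotonicity}.
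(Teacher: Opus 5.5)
Your proof is correct, and it takes a genuinely different and more direct route than the paper.

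\textbf{The paper's route.} It diagonalizes $S$ and $T$ with ordered eigenvalues and bounds the two mixed terms by their maximum over $\On$. Lemma~\ref{lemma:mainLemma}~i) then shows this maximum is attained at a signed permutation matrix. That proof uses the same reduction to a linear functional on doubly stochastic matrices and Birkhoff--von Neumann. The result is only an inequality, $\iprod{\Sigma_f(S)-\Sigma_f(T),S-T}\geq\iprod{f(x)-f(y_\pi),x-y_\pi}$, for a single worst-case permutation. Strictness then needs extra work: a separate treatment of the case $D_1=D_3$, Lemma~\ref{lemma:vectorMonotonicityImpliesSameAlgebraicOrder}, the equality case of the rearrangement inequality, and the Frobenius-norm argument of Lemma~\ref{lemma:mainLemma}~ii).

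\textbf{Your route.} You apply Birkhoff--von Neumann to the actual matrix $P=(O_{ki}^2)$. Because the same $P$ appears in both mixed terms, you get an exact identity, $\iprod{\Sigma_f(S)-\Sigma_f(T),S-T}=\sum_\pi c_\pi\iprod{f(x)-f(y_\pi),x-y_\pi}$. No maximization, compactness, eigenvalue ordering or rearrangement inequality is needed, and strictness reduces to your two-line norm computation.

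\textbf{What your route buys.} The same weights give $\sum_\pi c_\pi\norm{x-y_\pi}^2=\norm{x}^2+\norm{y}^2-2\,x^TPy=\norm{S-T}^2$. Hence strong vector-monotonicity with constant $c^+$ transfers to strong matrix-monotonicity with the same constant, which is the Drucker-type condition \eqref{Css}. The paper's lower bound $\iprod{D_1-D_3,\Sigma_f(D_1)-\Sigma_f(D_3)}$ does not give this directly, since $\norm{D_1-D_3}$ can vanish while $S\neq T$.

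\textbf{What the paper's route buys.} It yields Lemma~\ref{lemma:mainLemma} itself, a general statement about maximizers of $\sum_k\iprod{A\kpow,Q^TB\kpow Q}$ over $\On$. That lemma ties the result to Hill's rearrangement idea and to the von Neumann--Theobald trace inequality.
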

The main part of the proof consists in showing that the following lemma holds.
\begin{lemma}
	\label{lemma:mainLemma}
	For each $k\in\{1,\dotsc,m\}$ let $A\kpow,B\kpow\in\Rnn$ be diagonal and let
	\begin{equation}
		\Psi\col\On\to\R\,,\qquad \Psi(Q) = \sum_{k=1}^m \iprod{A\kpow, Q^T B\kpow Q}\,.
	\end{equation}
	Then
	\begin{itemize}
		\item[i)] $\Psi$ attains its maximum at $\Qhat\in\SOn$ such that $\Qhat^T B\kpow \Qhat$ is diagonal for each $k\in\{1,\dotsc,m\}$.
		\item[ii)] If the diagonal entries at all $A\kpow$ and $B\kpow$ are ordered with identical equality blocks, then for all maximizer $\Qtilde\in\On$ of $\Psi$ it holds
		$\Qtilde^T B\kpow \Qtilde=B\kpow$ for each $k\in\{1,\dotsc,m\}$.
	\end{itemize}
\end{lemma}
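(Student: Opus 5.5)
The plan is to linearize $\Psi$ in the squared entries of $Q$, so that maximizing over $\On$ becomes maximizing a linear functional over the Birkhoff polytope. Write $A\kpow=\diag(a\kpow_1,\dotsc,a\kpow_n)$ and $B\kpow=\diag(b\kpow_1,\dotsc,b\kpow_n)$. Exactly as in the proof of Lemma \ref{lemma:permutationMatrixPreservesDiagonalForm}, $(Q^TB\kpow Q)_{ii}=\sum_j b\kpow_j\,Q_{ji}^2$. Hence
\begin{equation*}
	\Psi(Q)=\sum_{i,j=1}^n C_{ji}\,P_{ji}\,,\qquad C_{ji}\colonequals\sum_{k=1}^m a\kpow_i\,b\kpow_j\,,\qquad P_{ji}\colonequals Q_{ji}^2\,.
\end{equation*}
By Lemma \ref{lemma:componentwiseSquaredOrthogonalMatrixIsDoublyStochastic}, $P\in B_n$. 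So $\Psi(Q)=L(P)$, where $L(P)\colonequals\sum_{i,j}C_{ji}P_{ji}$ is a linear functional on $\Rnn$.

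For i), since $\Psi(\On)\subset L(B_n)$, it suffices to maximize $L$ over $B_n$. The set $B_n$ is compact and convex, and by Theorem \ref{theorem:birkhoff-von-neumann} its vertices are the permutation matrices. A linear functional attains its maximum over such a polytope at a vertex, say at a permutation matrix $\Phat$. By Lemma \ref{lemma:permutationMatrixAsSignedSquare} there is a signed permutation matrix $\Qhat\in\SOn$ with $\Qhat_{ij}^2=\Phat_{ij}$, so $\Psi(\Qhat)=L(\Phat)=\max_{B_n}L\geq\sup_{\On}\Psi$. Thus $\Qhat$ is a maximizer, and Lemma \ref{lemma:permutationMatrixPreservesDiagonalForm} gives that each $\Qhat^TB\kpow\Qhat$ is diagonal.

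For ii), assume that all diagonals are decreasing with a common partition of $\{1,\dotsc,n\}$ into equality blocks, with strict descent between blocks. The first step is to identify the maximal vertices. For a permutation matrix $P_\pi$ we have $L(P_\pi)=\sum_k\sum_i a\kpow_i b\kpow_{\pi(i)}$. By Lemma \ref{lemma:hardy}, applied separately to each $k$, every summand is at most its value at $\pi=\id$. Hence the maximum of $L$ is $L(\id)$, and $\Qhat=\id$ is a maximizer.

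Equality $L(P_\pi)=L(\id)$ forces equality for every $k$. By the equality clause of Lemma \ref{lemma:hardy}, $\pi$ must then preserve the blocks. This needs at least one $k$ for which the blocks are genuinely distinct. If no such $k$ exists, there is only one block, each $B\kpow$ is a multiple of $\id$, and the claim $\Qtilde^TB\kpow\Qtilde=B\kpow$ is trivial.

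Now let $\Qtilde\in\On$ be any maximizer and set $\Ptilde_{ij}=\Qtilde_{ij}^2$. By \eqref{eq:BirkhoffNeumannConvexCombination}, $\Ptilde=\sum_\pi c_\pi P_\pi$ with all $c_\pi>0$. Linearity gives $L(\id)=L(\Ptilde)=\sum_\pi c_\pi L(P_\pi)$, where each $L(P_\pi)\le L(\id)$. Hence every $P_\pi$ occurring in the combination is a maximal vertex and therefore block-preserving. It follows that $\Ptilde_{ji}=0$, and hence $\Qtilde_{ji}=0$, whenever $i$ and $j$ lie in different blocks. So $\Qtilde$ is block diagonal with orthogonal diagonal blocks. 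On each block $B\kpow$ is a scalar multiple of the identity, so $\Qtilde^TB\kpow\Qtilde=B\kpow$.

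The main obstacle is this equality analysis. It has to pass from optimality of $\Ptilde$ to optimality of every vertex in its Birkhoff decomposition, and it has to combine the per-$k$ equality cases of Lemma \ref{lemma:hardy} into the single common block structure. This is exactly where the hypothesis that all $A\kpow,B\kpow$ share the same equality blocks is used.
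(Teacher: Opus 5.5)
Your proof is correct and follows essentially the same route as the paper. You linearize $\Psi$ via the doubly stochastic matrix $P_{ji}=Q_{ji}^2$, maximize the linear functional over $B_n$ at a permutation vertex lifted to a signed permutation in $\SOn$ by Lemma~\ref{lemma:permutationMatrixAsSignedSquare}, and for ii) use the Birkhoff decomposition of $P^{\Qtilde}$ with the equality case of Lemma~\ref{lemma:hardy} to force every permutation in it to be block-preserving. The only deviation is the last step: you read off block-diagonality of $\Qtilde$ from the support of $P^{\Qtilde}$, whereas the paper shows that $\Qtilde B\kpow\Qtilde^T$ has the diagonal of $B\kpow$ and then uses Frobenius-norm invariance to conclude it is diagonal. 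Both are valid, and your separate single-block case is unnecessary since the general argument already covers it.
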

A simple direct proof of this lemma for the 2D case and $k=2$ can be found in Appendix \ref{appendix:mainLemma2D}.
\begin{proof}[Proof of Theorem \ref{theorem:mainResult}]
	Due to Lemma \ref{lemma:matrixMonotonicityImpliesVectorMonotonicity}, we only need to show that (strict) vector-monotonicity implies (strict) matrix-monotonicity.
	
	Let $f$ be a vector-monotone function. For $S,T\in\Sym(M)$, we choose $Q_1,Q_2\in\On$ and diagonal matrices $D_1,D_2$ with ordered diagonal entries such that
	\begin{equation}
		S = Q_1^T D_1 \.Q_1 \qquad\text{and}\qquad T = Q_2^T D_2\. Q_2\,.
	\end{equation}
	With $\Qtilde\colonequals Q_2\.Q_1^T\in\On$, we find
	\begin{align}
		&\hspace*{-2.1em}\iprod{S-T,\, \Sigma_f(S)-\Sigma_f(T)}\notag\\
		&= \iprod{Q_1^T D_1\. Q_1 - Q_2^T D_2\. Q_2,\,  \Sigma_f(Q_1^TD_1\.Q_1)  -  \Sigma_f(Q_2^TD_2\.Q_2) }\notag\\
		&= \iprod{Q_1^T D_1\. Q_1 - Q_2^T D_2\. Q_2,\, Q_1^T \Sigma_f(D_1) Q_1 - Q_2^T \Sigma_f(D_2)\. Q_2}\notag\\
		&= \iprod{D_1 - Q_1\.Q_2^T D_2\. Q_2\.Q_1^T,\, \Sigma_f(D_1) - Q_1\.Q_2^T \Sigma_f(D_2) Q_2\.Q_1^T}\notag\\
		&= \iprod{D_1 - \Qtilde^T D_2 \.\Qtilde,\, \Sigma_f(D_1) - \Qtilde^T \Sigma_f(D_2)\. \Qtilde}\notag\\
		&= \iprod{D_1, \Sigma_f(D_1)} + \iprod{\Qtilde^T D_2\. \Qtilde,\, \Qtilde^T \Sigma_f(D_2)\. \Qtilde} - \iprod{D_1, \Qtilde^T \Sigma_f(D_2)\. \Qtilde} - \iprod{\Qtilde^T D_2\. \Qtilde,\, \Sigma_f(D_1)}\notag\\
		&= \iprod{D_1, \Sigma_f(D_1)} + \iprod{D_2, \Sigma_f(D_2)} - \iprod{D_1, \Qtilde^T \Sigma_f(D_2)\. \Qtilde} - \iprod{\Sigma_f(D_1), \Qtilde^T D_2\. \Qtilde}\\
		&\geq \iprod{D_1, \Sigma_f(D_1)} + \iprod{D_2, \Sigma_f(D_2)} - \max_{Q\in\On} \big[\iprod{D_1, Q^T \Sigma_f(D_2)\.Q} + \iprod{\Sigma_f(D_1), Q^T D_2\. Q}\big]\,.\notag
	\end{align}
	We can now apply Lemma \ref{lemma:mainLemma} i) to $A=D_1$, $B=\Sigma_f(D_2)$, $C=\Sigma_f(D_1)$ and $D=D_2$ to obtain a maximizer $\Qhat\in\SOn\subset\On$. Then
	\begin{align}
		&\hspace*{-2em}\iprod{S-T,\, \Sigma_f(S)-\Sigma_f(T)}\notag\\
		&\geq \iprod{D_1, \Sigma_f(D_1)} + \iprod{D_2, \Sigma_f(D_2)} - \iprod{D_1, \Qhat^T \Sigma_f(D_2)\. \Qhat} - \iprod{\Sigma_f(D_1), \Qhat^T D_2\. \Qhat}\label{eq:mainTheoremStrictness}\\
		&= \iprod{D_1, \Sigma_f(D_1)} + \iprod{\Qhat^T D_2\. \Qhat,\, \Qhat^T \Sigma_f(D_2)\. \Qhat} - \iprod{D_1, \Qhat^T \Sigma_f(D_2)\. \Qhat} - \iprod{\Qhat^T D_2 \.\Qhat, \Sigma_f(D_1)}\notag\\
		&= \iprod{D_1 - \Qhat^T D_2 \.\Qhat,\, \Sigma_f(D_1) - \Qhat^T \Sigma_f(D_2)\. \Qhat}\notag\\
		&= \iprod{D_1 - \Qhat^T D_2 \.\Qhat,\, \Sigma_f(D_1) - \Sigma_f(\Qhat^T D_2 \.\Qhat)}\notag\,.
	\end{align}
	Due to Lemma \ref{lemma:mainLemma} i), $D_3\colonequals\Qhat^T D_2 \.\Qhat\in\Sym(M)$ is diagonal, thus Lemma \ref{lemma:vectorMonotonicityImpliesDiagonalMonotonicity} yields
	\begin{equation}
		\iprod{S-T,\, \Sigma_f(S)-\Sigma_f(T)}
		\geq\iprod{D_1 - D_3,\, \Sigma_f(D_1) - \Sigma_f(D_3)} \geq 0\,,\label{eq:mainTheoremEnd}
	\end{equation}
	for all $S,T\in\Sym(M)$ and thus $f$ is matrix-monotone.
	\medskip
	
	Now, if $f$ is strictly vector-monotone, the last estimate \eqref{eq:mainTheoremEnd} is also strict for $D_1\neq D_3$. We consider the remaining case where $S\neq T$, but
	\begin{equation}
		D_1=D_3=\Qhat^T D_2\.\Qhat\,.
	\end{equation}
	Then, the diagonal entries of $D_1$ and $D_2$ are identical up to permutation and thus $D_1=D_2$, since both $d_1\colonequals\diag\inv(D_1)$ and $d_2\colonequals\diag\inv(D_2)$ are ordered.
	
	If $\Qtilde=Q_2\.Q_1^T$ is not a maximizer, the estimate \eqref{eq:mainTheoremStrictness} is strict and thus, we directly obtain strict matrix-monotonicity.
	Assume therefore that $\Qtilde\in\On$ is also a maximizer but does not necessarily equal $\Qhat\in\SOn$. 
	Because $f$ is strictly vector-monotone, Lemma \ref{lemma:vectorMonotonicityImpliesSameAlgebraicOrder} implies that $d_1$ and $f(d_1)$ (and thus also $d_2$ and $f(d_2)$) are ordered with identical equality blocks.
	Then according to Lemma \ref{lemma:mainLemma} ii), $\Qtilde^T D_2\.\Qtilde=D_2$ and thus
	\begin{align}
		S&=Q_1^T D_1 \.Q_1
		=Q_1^T D_2 \.Q_1
		=Q_1^T (\Qtilde^T D_2\.\Qtilde) \.Q_1
		=Q_1^T (Q_2\.Q_1^T)^T D_2\.(Q_2\.Q_1^T) \.Q_1\\
		&=Q_1^T Q_1\.Q_2^T D_2\.Q_2\.Q_1^T \.Q_1
		=Q_2^T D_2\. Q_2=T\,,\notag
	\end{align}
	contrary to the assumption $S\neq T$. Thus $\Qtilde$ cannot be a maximizer and inequality \eqref{eq:mainTheoremStrictness} is strict. Therefore, strict vector-monotonicity implies strict matrix-monotonicity.
\end{proof}
\begin{proof}[Proof of Lemma \ref{lemma:mainLemma}]
	i) The set $\On\subset\Rnn$ is compact and the mapping $\Psi$ is continuous, so there exists a (not necessarily unique) maximizer $\Qhat$. According to Lemma \ref{lemma:permutationMatrixPreservesDiagonalForm}, it suffices to show that at least one maximizer can be chosen as a signed permutation matrix.
	Let $a\kpow\colonequals\diag\inv(A\kpow)$ and $b\kpow\colonequals\diag\inv(B\kpow)$. We first compute
	\begin{equation}
		(Q^T B\kpow Q)_{ij} = \sum_{r,s=1}^n (Q^T)_{ir}\. B\kpow_{rs}\. Q_{sj}
		= \sum_{r=1}^n (Q^T)_{ir}\. B\kpow_{rr}\. Q_{rj}
		= \sum_{r=1}^n b\kpow_r \. Q_{ri}\. Q_{rj}\,.
	\end{equation}
	In particular,
	\begin{equation}
		(Q^TB\kpow Q)_{ii} = \sum_{j=1}^n b\kpow_j\. (Q_{ji})^2
	\end{equation}
	for all $i\in\{1,\dotsc,n\}$ and thus
	\begin{align}
		\iprod{A\kpow, Q^T B\kpow Q}=\sum_{i=1}^n A_{ii}\kpow\.(Q^T B\kpow Q)_{ii}
		=\sum_{i=1}^n a_i\kpow\.\sum_{j=1}^n b\kpow_j\. (Q_{ji})^2.
	\end{align}
	Therefore,
	\begin{align}
		\Psi(Q) &= \sum_{k=1}^m \iprod{A\kpow, Q^T B\kpow Q}
		=\sum_{k=1}^m \sum_{i,j=1}^n a_i\kpow\.b\kpow_j\. (Q_{ji})^2
		=\sum_{k=1}^m \sum_{i,j=1}^n a_i\kpow\.b\kpow_j\. P^Q_{ji}\\
		&=\sum_{k=1}^m \sum_{j=1}^n (P^Q\.a\kpow)_j\.b\kpow_j
		=\sum_{k=1}^m \iprod{P^Q\.a\kpow,b\kpow}\,,\notag
	\end{align}
	where $P^Q\in\Rnn$ is the doubly stochastic matrix with $P^Q_{ji}=(Q_{ji})^2$, cf.\ Lemma \ref{lemma:componentwiseSquaredOrthogonalMatrixIsDoublyStochastic}. Thus
	\begin{align}
		\max_{Q\in\On}\Psi(Q)&= \sum_{k=1}^m \iprod{P^Q\.a\kpow,b\kpow}
		\leq \max_{P\in B_n} \sum_{k=1}^m \iprod{P\.a\kpow,b\kpow}\,,\label{eq:BirkhoffEstimate}
	\end{align}
	where $B_n\subset\Rnn$ is the set of all doubly-stochastic matrices.
	The mapping
	\begin{equation}
		L\col\Rnn\to\R\,,\qquad L(P)=\sum_{k=1}^m \iprod{P\.a\kpow,b\kpow}
	\end{equation}
	is linear and thus convex. Therefore, it attains its maximum on the convex set $B_n$ at (at least) one of the vertices of $B_n$, which are exactly the permutation matrices, cf.\ Theorem \ref{theorem:birkhoff-von-neumann}. According to Lemma \ref{lemma:permutationMatrixAsSignedSquare} each permutation matrix $\Phat$ can be written as $\Phat_{ji}=(\Qhat_{ji})^2$ with a signed
	permutation matrix $\Qhat\in\SOn$, so the maximum $\displaystyle\max_{Q\in\On}\Psi(Q)$ is attained at a signed permutation matrix $\Qhat\in\SOn$ and due to Lemma \ref{lemma:permutationMatrixPreservesDiagonalForm}, $\Qhat^TD\.\Qhat$ is diagonal.
	\medskip
	
	ii) Under the additional assumption that $a\kpow$ and $b\kpow$ are ordered with identical equality blocks, it follows from Lemma \ref{lemma:hardy} that
	\begin{equation}
		\iprod{P\.a\kpow,b\kpow}\leq \iprod{a\kpow,b\kpow}
	\end{equation}
	for all $k\in\{1,\dotsc,m\}$, and that equality holds if and only if $P\.a\kpow=a\kpow$. In particular, $P=\id$ maximizes $L$, and for any $P\in\Rnn$ with $P\.a\kpow\neq a\kpow$ for some $k\in\{1,\dotsc,m\}$ (equivalently, $P\.b\kpow\neq b\kpow$ for some $k\in\{1,\dotsc,m\}$, since the equality blocks are identical), it must hold
	\begin{equation}
		L(P)<L(\id)\,.
	\end{equation}
	Now, for each maximizer $\Qtilde\in\On$ of $\Psi$ we consider $P^\Qtilde\in\Rnn$ with $P^\Qtilde_{ji}=(\Qtilde_{ji})^2$.
	The objective function $L(P)$ is linear in $P$ and with the Birkhoff decomposition \eqref{eq:BirkhoffNeumannConvexCombination}, we have
	\begin{equation}
		L(P^\Qtilde)=\sum_{\pi}^m c_\pi\.L(P_\pi)\,,\qquad L(P_\pi)\leq\Psi(\Qtilde)=L(P^\Qtilde)\,.\label{eq:convexCombination}
	\end{equation}
	Because $\sum_{\pi=1}^m c_\pi=1$ and $c_\pi>0$, each permutation matrix $P_\pi$ must already attain the upper bound individually. Then
	\begin{equation}
		P^\Qtilde=\sum_{\pi}c_\pi\.P_\pi\,,\qquad\text{with}\qquad P_\pi\.b\kpow=b\kpow
	\end{equation}
	for all $k\in\{1,\dotsc,m\}$ and thus
	\begin{equation}
		(\Qtilde\. B\kpow \Qtilde^T)_{ii} 
		=\sum_{j=1}^n (\Qtilde_{ij})^2\. b\kpow_j
		= (P^{\Qtilde}b\kpow)_i = \sum_\pi c_\pi\.(P_\pi\. b\kpow)_i = \sum_\pi c_\pi\.b\kpow_i = b\kpow_i = B\kpow_{ii}
	\end{equation}
	for all $i\in\{1,\dotsc,n\}$.
	Hence, $\Qtilde\. B\kpow \Qtilde^T$ and $B\kpow$ have identical diagonal entries.
	Due to the $\SO(n)$-invariance of the Frobenius norm, and since $B\kpow$ is diagonal,
	\begin{equation}
		\sum_{i,j=1}^n (\Qtilde\. B\kpow \Qtilde^T)_{ij}^2 = \norm{\Qtilde\. B\kpow \Qtilde^T}^2 = \norm{B\kpow}^2 = \sum_{i=1}^n (B\kpow_{ii})^2 = \sum_{i=1}^n (\Qtilde\. B\kpow \Qtilde^T)_{ii}^2\,,
	\end{equation}
	which immediately implies $\Qtilde\. B\kpow \Qtilde^T$ is also diagonal and thus
	\begin{equation}
		\Qtilde\. B\kpow \Qtilde^T = B\kpow
		\qquad\iff\qquad
		B\kpow=\Qtilde^T B\kpow \Qtilde 
	\end{equation}
	for all $k\in\{1,\dotsc,m\}$.
\end{proof}

%
%
\subsection{Generalization of Friedland's theorem}
Similar to the generalization of Chandler Davis Theorem \ref{theorem:DavisLewis}, Friedland's theorem \ref{theorem:friedlandOrderedConvexity} for ordered eigenvalues can be generalized to tensor-valued functions without a potential as well. 
\begin{theorem}
	Let $\phi\col\Rsort\to\R^n$ such that $\phi_i(x)=\phi_j(x)$ for all $x\in\Rsort$ with $x_i=x_j$. Then the function $\Sigma^\phi \col\Symn\to\Symn$ as in Lemma \ref{lemma:orderedVectorFunctionInducesTensorFunction} is monotone if and only if
	\begin{itemize}
		\item[i)]
			$\phi$ is monotone and
		\item[ii)]
			$\phi(x)\in\Rsort$ for all $x\in\Rsort$.
	\end{itemize}
	Furthermore, $\Sigma^\phi$ is strictly monotone if and only if
	\begin{itemize}
		\item[i*)]
			$\phi$ is strictly monotone and
		\item[ii*)]
			$\phi_i(x)>\phi_j(x)$ for all $x\in\Rsort$ with $x_i>x_j$.
	\end{itemize}
\end{theorem}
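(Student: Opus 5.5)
The plan is to reduce the statement to Theorem~\ref{theorem:mainResult}. By Lemma~\ref{lemma:orderedVectorFunctionInducesTensorFunction} and Lemma~\ref{lemma:existenceOfsymmetricExpansion}, $\Sigma^\phi=\Sigma_f$, where $f\col\R^n\to\R^n$ is the unique symmetric extension of $\phi$, given by $f(\lambda)=\bigl(\phi_{\pi^{-1}(i)}((\lambda_{\pi(j)})_j)\bigr)_i$ with $\pi$ a sorting permutation. Theorem~\ref{theorem:mainResult} says that $\Sigma^\phi$ is (strictly) monotone if and only if $f$ is (strictly) vector-monotone on $\R^n$. It therefore suffices to prove
\[
f \text{ (strictly) monotone on } \R^n \iff \text{i) and ii) (resp.\ i*) and ii*))}.
\]

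For ``$\Rightarrow$'' we argue as follows. Restricting the monotonicity inequality of $f$ to $x,y\in\Rsort$, where $f=\phi$, gives i), and strictness gives i*). Lemma~\ref{lemma:vectorMonotonicityImpliesSameAlgebraicOrder}, applied to $x\in\Rsort$, gives $\phi_1(x)\ge\dots\ge\phi_n(x)$, i.e.\ ii). In the strict case the same lemma gives $(x_i-x_j)(\phi_i(x)-\phi_j(x))>0$ whenever $x_i>x_j$, which is ii*).

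For ``$\Leftarrow$'' take $x,y\in\R^n$ and write $x=P_1\sort x$, $y=P_2\sort y$ with permutation matrices $P_1,P_2$. By symmetry $f(x)=P_1\phi(\sort x)$ and $f(y)=P_2\phi(\sort y)$. Expanding the inner product gives
\[
\iprod{f(x)-f(y),x-y}=\iprod{\phi(\sort x),\sort x}+\iprod{\phi(\sort y),\sort y}-\iprod{P_1\phi(\sort x),P_2\sort y}-\iprod{P_2\phi(\sort y),P_1\sort x}.
\]
By ii), the vectors $\phi(\sort x),\sort y$ are both decreasingly ordered, and so are $\phi(\sort y),\sort x$. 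Hardy's Lemma~\ref{lemma:hardy} then bounds each cross term by its unpermuted version, $\iprod{\phi(\sort x),\sort y}$ and $\iprod{\phi(\sort y),\sort x}$. This yields
\[
\iprod{f(x)-f(y),x-y}\ \ge\ \iprod{\phi(\sort x)-\phi(\sort y),\sort x-\sort y}\ \ge\ 0,
\]
where the last inequality is i).

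The main obstacle is the strict case. Suppose $x\neq y$. If $\sort x\neq\sort y$, then i*) already makes the last inequality strict. If $\sort x=\sort y=:z$ while $x\ne y$, both sides of the i*) comparison vanish, so strictness must come from the Hardy step. Here I would use ii*): it says $\phi(z)$ and $z$ are ordered with identical equality blocks, strictly decreasing across blocks. The equality statement of Lemma~\ref{lemma:hardy} then says that $\iprod{P_1\phi(z),P_2z}=\iprod{\phi(z),z}$ holds only if $P_2^TP_1$ preserves the blocks of $z$. That would force $P_1z=P_2z$, i.e.\ $x=y$, a contradiction. Hence at least one Hardy inequality is strict (in fact both are, by the same argument), and strict vector-monotonicity of $f$ follows. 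Strict matrix-monotonicity of $\Sigma^\phi$ then follows from Theorem~\ref{theorem:mainResult}.
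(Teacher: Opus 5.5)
Your proof is correct and follows essentially the same route as the paper. You reduce to Theorem~\ref{theorem:mainResult} via the symmetric extension $f$, get i), ii) (resp.\ i*), ii*)) from diagonal matrices and the coordinate-swap argument, and for the converse bound both cross terms by Hardy's rearrangement inequality before applying i). Your handling of the case $\sort x=\sort y$ through the block-preserving equality characterization in Lemma~\ref{lemma:hardy}, applied to the permutation $P_2^TP_1$, is a sharper rendering of the paper's ``ordered compatibly'' argument.
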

\begin{proof}
	If $\Sigma^\phi $ is monotone, then for all $x,y\in\Rsort$
	\begin{align}
		\iprod{\phi(x)-\phi(y),x-y} &= \iprod{\diag(\phi(x))- \diag(\phi(y)),\diag(x)-\diag(y)}\\
		&= \iprod{\Sigma^\phi (\diag(x))-\Sigma^\phi (\diag(y)),\diag(x)-\diag(y)} \geq 0\notag
	\end{align}
	and strict inequality holds if $\Sigma^\phi $ is strictly monotone. 
	For the proof of ii), we assume there exists a $x\in\Rsort$ with $\phi(x)\not\in\Rsort$. Thus there exist $i\neq j$ such that $x_i\geq x_j$ but $\phi_i(x)<\phi_j(x)$. We choose $y\in\R^n$ as
		\begin{align}
			y_k=x_k\qquad\forall\;k\neq i,j\,,\qquad y_i=x_j\quad\text{and}\quad y_j=x_i\,.
		\end{align}
	Lemma \ref{lemma:existenceOfsymmetricExpansion} states the existence of a symmetric extension $f_\phi\col\R^n\to\R^n$ of $\phi$ and we compute
	\begin{align}
		&\phantom{.}\hspace{-1cm}\iprod{\Sigma^\phi (\diag(x))-\Sigma^\phi (\diag(y)),\diag(x)-\diag(y)} \notag\\
		&=
		\left(\Sigma^\phi (\diag(x))_{ii}-\Sigma^\phi (\diag(y))_{ii}\right)(x_i-y_i)+\left(\Sigma^\phi (\diag(x))_{jj}-\Sigma^\phi (\diag(y))_{jj}\right)(x_j-y_j)\notag\\
		&=(x_i-x_j)\left(\Sigma^\phi (\diag(x))_{ii}+\Sigma_{f_\phi}(\diag(y))_{jj}-\Sigma^\phi (\diag(x))_{jj}-\Sigma_{f_\phi}(\diag(y))_{ii}\right)\notag\\
		&=(x_i-x_j)\left(\phi_i(x)+{f_\phi}_j(y)-\phi_j(x)-{f_\phi}_i(y)\right)\notag\\
		&=2\.(x_i-x_j)\left(\phi_i(x)-\phi_j(x)\right)<0\,.\label{eq:strictMonotonicityPairing}
	\end{align}
	This contradicts the monotonicity of $\Sigma^\phi $ and thus ii) must hold. In the case of strict monotonicity to prove ii*), suppose that $x_i>x_j$ but $\phi_i(x)=\phi_j(x)$. Swapping the two coordinates gives $y\neq x$ but zero in the pairing \eqref{eq:strictMonotonicityPairing} contradicting strict monotonicity of $\Sigma^\phi$. Thus ii*) must hold.

	\medskip
	Now let i), ii) hold. Then there exists a unique function $f\col\R^n\to\R^n$ with $f(\sort{x})=\phi(\sort{x})$ for all $\sort{x}\in\Rsort$, and $\Sigma^\phi =\Sigma_f$, cf.\ Lemma \ref{lemma:orderedVectorFunctionInducesTensorFunction}. For $x\in\R^n$, let $\sort{x}\in\Rsort$ denote the vector with the components of $x$ sorted in descending order. Then ii) implies $f(\sort{x}) = \sort{f(x)}$ for all $x\in\R^n$ and thus
	\begin{align}
		\iprod{f(x)-f(y), x-y} &= \iprod{f(x),x} + \iprod{f(y),y} - \iprod{f(x),y} - \iprod{f(y),x}\nnl
		&= \iprod{\sort{f(x)}, \sort{x}} + \iprod{\sort{f(y)}, \sort{y}} - \iprod{f(x),y} - \iprod{f(y),x}\nnl
		&\geq \iprod{\sort{f(x)}, \sort{x}} + \iprod{\sort{f(y)}, \sort{y}} - \iprod{\sort{f(x)}, \sort{y}} - \iprod{\sort{f(y)}, \sort{x}} \label{eq:orderedVectorsHillInequality}\\
		&= \iprod{f(\sort{x}), \sort{x}} + \iprod{f(\sort{y}), \sort{y}} - \iprod{f(\sort{x}), \sort{y}} - \iprod{f(\sort{y}), \sort{x}}\nnl
		&= \iprod{\phi(\sort{x}), \sort{x}} + \iprod{\phi(\sort{y}), \sort{y}} - \iprod{\phi(\sort{x}), \sort{y}} - \iprod{\phi(\sort{y}), \sort{x}}\nnl
		&= \iprod{\phi(\sort x)-\phi(\sort y), \sort x-\sort y}
		\;\geq\; 0 \label{eq:orderedVectorsOrderedMonotonicity}
	\end{align}
	for all $x,y\in\R^n$, where \eqref{eq:orderedVectorsHillInequality} is due to Lemma \ref{lemma:hardy}.
	Furthermore, if $x\neq y$ and $\phi$ is strictly monotone, then either $\sort{x}\neq\sort{y}$ and hence strict inequality holds in \eqref{eq:orderedVectorsOrderedMonotonicity}; or $\sort{x}=\sort{y}$ which means, that $x$ and $y$ are not sorted in the same algebraic order. 
	We call two vectors $a,b\in\R^n$ \emph{ordered compatibly} if there exists a permutation  $\pi\col\{1,\dotsc,n\}\to\{1,\dotsc,n\}$  such that simultaneously $\sort{a}_i=a_{\pi(i)}$ and $\sort{b}_i=b_{\pi(i)}$ for all $i=1,\dotsc,n$.
	Condition ii*) ensures that $x$ and $f(x)$ as well as $y$ and $f(y)$ are ordered compatibly and with $x\neq y$ at least one pair $\iprod{f(x),y}$ or $\iprod{f(y),x}$ is not ordered compatibly.
	Thus when ordering, step \eqref{eq:orderedVectorsHillInequality} must involve an inversion of two indices with distinct values in the corresponding vectors and it is therefore strict \cite{hardy1952inequalities}.
	
	Thus the function $f$ is (strictly) monotone on $\R^n$ and we can apply Theorem \ref{theorem:mainResult} to find that $\Sigma^\phi =\Sigma_f$ is (strictly) monotone.
\end{proof}
\begin{remark}
	Note that if $\Sigma^\phi $ represents the Cauchy stress tensor $\sigma\in\Sym(3)$, then condition ii*) corresponds to the strong Baker-Ericksen inequalities
	\begin{equation}
		\text{BE$^+$:}\qquad\lambda_i>\lambda_j\qquad\implies\qquad\sigma_i>\sigma_j\qquad\forall\;i,j\in\{1,\dotsc,n\}\label{eq:BE}
	\end{equation}
	with the principal stresses $\sigma_i$ corresponding to the principal stretches $\lambda_i$.
\end{remark}

%
%
%
%
%
\section{Invertibility of isotropic tensor functions}
A similar equivalence criterion holds for the \emph{invertibility} of isotropic tensor functions.
\begin{theorem}\label{theorem:invertibility}
	Let $f\col\R^n\to\R^n$ be symmetric.
	\begin{itemize}
		\item[i)] The function $\Sigma_f\col\Symn\to\Symn$ is injective if and only if $f$ is injective.
		\item[ii)] The function $\Sigma_f\col\Symn\to\Symn$ is surjective if and only if $f$ is surjective.
	\end{itemize}
	In particular, $\Sigma_f$ is invertible if and only if $f$ is invertible.
\end{theorem}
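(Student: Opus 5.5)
The plan is to reduce both statements to diagonal matrices, using the representation $\Sigma_f(Q^T\diag(\lambda)\.Q)=Q^T\diag(f(\lambda))\.Q$ from Lemma \ref{lemma:existenceAndUniquenessOfMatrixFunction} together with the symmetry of $f$. The symmetry gives $f(P\lambda)=Pf(\lambda)$ for every permutation matrix $P$. The "only if" directions are immediate by restricting to diagonal matrices. For i), if $f(x)=f(y)$ then $\Sigma_f(\diag(x))=\diag(f(x))=\diag(f(y))=\Sigma_f(\diag(y))$, so injectivity of $\Sigma_f$ gives $x=y$. For ii), given $a\in\R^n$, surjectivity of $\Sigma_f$ yields $S=Q^T\diag(x)\.Q$ with $\diag(a)=\Sigma_f(S)=Q^T\diag(f(x))\.Q$. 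Hence $a$ and $f(x)$ have the same eigenvalues, so $a=Pf(x)=f(Px)$ for some permutation $P$, and $f$ is surjective.

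The "if" direction of ii) is equally direct. Given $A\in\Symn$, write $A=R^T\diag(a)\.R$ with $R\in\On$. Choose $x$ with $f(x)=a$ and set $S\colonequals R^T\diag(x)\.R$. By the well-definedness of $\Sigma_f$ (Lemma \ref{lemma:existenceAndUniquenessOfMatrixFunction}) we get $\Sigma_f(S)=R^T\diag(f(x))\.R=A$.

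The main obstacle is the "if" direction of i), because there eigenvectors have to be controlled and not just eigenvalues. The first step is a key observation, proved as in Lemma \ref{lemma:vectorMonotonicityImpliesSameAlgebraicOrder}: if $f$ is injective and $\lambda_i\neq\lambda_j$, then $f_i(\lambda)\neq f_j(\lambda)$. Indeed, the transposed vector $y$ satisfies $y\neq\lambda$ and $f(y)$ equals $f(\lambda)$ with entries $i,j$ swapped, so $f_i(\lambda)=f_j(\lambda)$ would give $f(y)=f(\lambda)$. Consequently, whenever $S=R^T\diag(\lambda)\.R$, the eigenspaces of $\Sigma_f(S)$ are exactly the (possibly merged) eigenspaces of $S$: each eigenspace of $\Sigma_f(S)$ is spanned by the columns of $R^T$ with a common value $f_i(\lambda)$, and on it $S$ acts as a scalar.

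Now suppose $\Sigma_f(S)=\Sigma_f(T)\eqqcolon A$. Fix $R$ with $S=R^T\diag(\lambda)\.R$, so that $A=R^T\diag(f(\lambda))\.R$. By the observation, $T$ also acts as a scalar on every eigenspace of $A$. Hence $T$ can be diagonalized in the same basis, $T=R^T\diag(\nu)\.R$, after possibly rotating $R$ within the eigenspaces of $A$, which leaves the representation of $S$ unchanged. Then $A=\Sigma_f(T)=R^T\diag(f(\nu))\.R$, so $f(\nu)=f(\lambda)$ componentwise. Injectivity gives $\nu=\lambda$ and thus $S=T$. Invertibility then follows by combining i) and ii).
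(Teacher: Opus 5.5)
Your proof is correct. The ``only if'' directions and the ``if'' direction of ii) coincide with the paper's, but for the ``if'' direction of i) you take a genuinely different route.

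The paper never looks at eigenspaces. It takes a left inverse $h$ of $f$ on the range of $f$ and checks that this range is a symmetric set and that $h$ is symmetric. It then invokes the representation lemma to obtain the isotropic function $\Sigma_h$, with $\Sigma_h(\Sigma_f(X))=X$ for all $X$.

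You argue directly instead. Injectivity of $f$ forces $f_i(\lambda)\neq f_j(\lambda)$ whenever $\lambda_i\neq\lambda_j$. Hence $\Sigma_f(T)=A$ makes $T$ act as a scalar on every eigenspace of $A$, so the eigenbasis of $A$ coming from $S$ also diagonalizes $T$, and injectivity on eigenvalue vectors finishes the argument.

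Both arguments rest on the same fact. The well-definedness of $\Sigma_h$ silently requires $y_i=y_j\Rightarrow h_i(y)=h_j(y)$, and for $y=f(x)$ this is exactly your separation property.
\begin{itemize}
\item The paper's version hides that fact inside the general representation lemma. In return it identifies the inverse explicitly as the isotropic function $\Sigma_{f^{-1}}$ induced by $f^{-1}$.
\item Your version makes the mechanism visible. It also shows that $\Sigma_f(S)=\Sigma_f(T)$ forces $S$ and $T$ to be coaxial. It is the same separation property the paper later uses in \eqref{eq:semiInvertibility} for Proposition \ref{prop:injectivityBE}.
\end{itemize}

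Two cosmetic points. Once the separation property holds, the eigenspaces of $\Sigma_f(S)$ and $S$ coincide exactly, so ``possibly merged'' is unnecessary. Also, no rotation of $R$ is needed, because the columns of $R^T$ are already eigenvectors of $A$ and therefore of $T$.
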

\begin{proof}
	i)\quad
	If $\Sigma_f$ is injective, then we observe that for all $x,y\in\R^n$ with $f(x)=f(y)$,
	\begin{equation}
		\Sigma_f(\diag(x)) = \diag(f(x)) = \diag(f(y)) = \Sigma_f(\diag(y))\,,
	\end{equation}
	thus the injectivity of $\Sigma_f$ implies $\diag(x)=\diag(y)$ or, equivalently, $x=y$.
	
	Now let $f$ be injective. Then there exists a \emph{left inverse} of $f$, i.e.\ a function $h\col\range(f)\to\R^n$ such that $h(f(x))=x$ for all $x\in\R^n$, where $\range(f) = \{f(x) \setvert x\in\R^n\}$ denotes the range of $f$. We observe that $\range(f)$ is a symmetric set, and since, for all $y=f(x)\in\range(f)$ and all permutations $\pi\col\{1,\dotsc,n\}\to\{1,\dotsc,n\}$,
	\begin{align}
		h(y_{\pi(1)},\dotsc, y_{\pi(n)}) &= h(f_{\pi(1)}(x),\dotsc f_{\pi(n)}(x))\\
		&= h(f(x_{\pi(1)},\dotsc, x_{\pi(n)}))
		= (x_{\pi(1)},\dotsc, x_{\pi(n)})
		= (h_{\pi(1)}(y),\dotsc, h_{\pi(n)}(y))\,,\notag
	\end{align}
	the function $h$ is a symmetric vector function.
		
	Let $\Sigma_h\col \Sym(\range(f))\to\Symn$ denote the isotropic tensor function corresponding to the symmetric vector function $h$. Then for all $X = Q^T\diag(x)\.Q\in\Symn$,
	\begin{equation}
		\Sigma_h(\Sigma_f(X)) = \Sigma_h(Q^T\diag(f(x))\.Q) = Q^T\diag(h(f(x)))\.Q = Q^T\.\diag(x)\.Q = X\,.
	\end{equation}
	Therefore, $\Sigma_h$ is a left inverse of $\Sigma_f$, which implies that $\Sigma_f$ is injective.
	
	\medskip
	 ii)\quad
	If $\Sigma_f$ is surjective, then for any $y\in\R^n$, there exists $S\in\Symn$ such that $\Sigma_f(S) = \diag(y)$. Let $x_1,\dotsc,x_n$ denote the eigenvalues of $S$.
	Then there exists a rearrangement $x_\pi = x_{\pi(1)},\dotsc,x_{\pi(n)}$ of $x$ such that $y=f(x_\pi)$, since $f$ maps the eigenvalues of $S$ to those of $\Sigma_f(S)=\diag(y)$. Therefore, $f$ is surjective as well.
	
	Now let $f$ be surjective. Then for any $S = Q^T\diag(y)\.Q\in\Symn$ with $y\in\R^n$ and $Q\in\On$, there exists $x\in\R^n$ such that $f(x)=y$ and thus
	\begin{equation}
		\Sigma_f(Q^T\diag(x)\.Q) = Q^T\diag(f(x))\.Q = Q^T\diag(y)\.Q = S\,,
	\end{equation}
	hence $\Sigma_f$ is surjective.
\end{proof}
This result fundamentally improves a previous statement in \citet{truesdell1975inequalities} in which the Baker-Ericksen inequalities \eqref{eq:BE} were invoked to first assert the so-called \textbf{semi-invertibility} of $B\to\sigma(B)$, i.e.\ the possibility to write
\begin{align}
	B=\psi_0(I_1(B),I_2(B),I_3(B))\.\id+\psi_1(I_1(B),I_2(B),I_3(B))\.\sigma(B)+\psi_2(I_1(B),I_2(B),I_3(B))\.(\sigma(B))^2\,.
\end{align}
Observe that
\begin{equation}
I_1(B)=\lambda_1^2+\lambda_2^2+\lambda_3^2=\norm{F}^2\,,\quad I_2(B)=\lambda_1^2\.\lambda_2^2+\lambda_1^2\.\lambda_3^2+\lambda_2^2\.\lambda_3^2=\norm{\Cof F}^2\,,\quad I_3(B)=\lambda_1^2\.\lambda_2^2\.\lambda_3^2=(\det F)^2
\end{equation}
and thus that the semi-invertibility can also be written as
\begin{align}
	B=\alpha_0(\lambda_1,\lambda_2,\lambda_3)\.\id+\alpha_1(\lambda_1,\lambda_2,\lambda_3)\.\sigma(B)+\alpha_2(\lambda_1,\lambda_2,\lambda_3)\.(\sigma(B))^2
\end{align}
with symmetric functions $\alpha_0,\alpha_1,\alpha_2\col\R_+^3\to\R\,.$ \citet{truesdell1975inequalities} add \enquote{... proper numbers [$(\lambda_1,\lambda_2,\lambda_3)$] of $[B]$ be (single valued) functions of the proper numbers of $[\sigma]$.}

Theorem \ref{theorem:invertibility} allows to dramatically reduce the effort to check for invertibility, since we only need to check for invertibility of principal Cauchy stresses $\widehat\sigma\col\R_+^3\to\R^3$ versus principal stretches.
Another, for us surprising, connection is given in our last Theorem.
\begin{theorem}
	Assume that the map $V\to\sigma(V)$ is injective and continuously differentiable. Moreover, assume that the tangent stiffness in the natural state $\id$ is positive definite, i.e.
	\begin{equation}
		\iprod{\sym\D\sigma(\id).H,H}>0\qquad\forall\;H\in\Sym(3)\backslash\{0\}.\label{eq:postiveDefiniteDSigma}
	\end{equation}
	Then the strong Baker-Ericksen inequalities \eqref{eq:BE} are satisfied throughout.
\end{theorem}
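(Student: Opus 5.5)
The plan is to reduce the statement to a sign-constancy argument for differences of principal stresses on a connected set of stretches, and to fix the sign near the natural state using \eqref{eq:postiveDefiniteDSigma}. Let $\sigmahat\col\R_+^3\to\R^3$ denote the symmetric vector function of principal Cauchy stresses, so that $\sigma=\Sigma_{\sigmahat}$ on $\Symp(3)$. First I would note that $\sigmahat$ is continuously differentiable: $\sigmahat(\lambda)$ is the vector of diagonal entries of $\sigma(\diag(\lambda))$, and $\lambda\mapsto\diag(\lambda)$ is linear. Second, by Theorem \ref{theorem:invertibility} i), whose proof carries over verbatim to $M=\R_+^3$, injectivity of $V\mapsto\sigma(V)$ gives injectivity of $\sigmahat$ on $\R_+^3$.

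Fix the pair $(i,j)=(1,2)$; the other pairs follow by symmetry of $\sigmahat$. Consider the open convex, hence connected, set
\begin{equation*}
	\Omega\colonequals\{\lambda\in\R_+^3 \setvert \lambda_1>\lambda_2\}
\end{equation*}
and the continuous function $g(\lambda)\colonequals\sigmahat_1(\lambda)-\sigmahat_2(\lambda)$. The key step is to show that $g\neq0$ on $\Omega$. Suppose $g(\lambda)=0$ for some $\lambda\in\Omega$, and let $y=(\lambda_2,\lambda_1,\lambda_3)$. By symmetry of $\sigmahat$, $\sigmahat(y)$ is $\sigmahat(\lambda)$ with its first two entries swapped, and since these entries are equal, $\sigmahat(y)=\sigmahat(\lambda)$. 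But $y\neq\lambda$ because $\lambda_1\neq\lambda_2$, which contradicts injectivity. Hence, by connectedness and the intermediate value theorem, $g$ has constant sign on $\Omega$.

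It remains to identify this sign near $\id$, and I expect this to be the only delicate point. The difficulty is that $g(1,1,1)=0$ lies on the boundary of $\Omega$, so a derivative argument is needed. Take $h=(1,-1,0)$ and $H=\diag(h)\in\Sym(3)\setminus\{0\}$, and set $\lambda(t)=(1+t,1-t,1)$. For small $t>0$ we have $\lambda(t)\in\Omega$. By the chain rule,
\begin{equation*}
	\frac{\mathrm d}{\mathrm dt}\Big|_{t=0} g(\lambda(t))=\iprod{\D\sigmahat(1,1,1)\.h,\,h}=\iprod{\D\sigma(\id).H,\,H}=\iprod{\sym\D\sigma(\id).H,\,H}>0\,,
\end{equation*}
using \eqref{eq:postiveDefiniteDSigma}. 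Here the second equality uses that $\D\sigma(\id).H$ restricted to diagonal directions equals $\diag(\D\sigmahat(1,1,1)\.h)$, and the third uses that $H$ is symmetric. Since $g(\lambda(0))=0$, it follows that $g(\lambda(t))>0$ for small $t>0$, so $g>0$ on all of $\Omega$, i.e.\ $\lambda_1>\lambda_2$ implies $\sigma_1>\sigma_2$.

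Finally, for an arbitrary pair $i\neq j$ with $\lambda_i>\lambda_j$, apply a permutation bringing $i,j$ to positions $1,2$. The symmetry of $\sigmahat$ then yields $\sigma_i>\sigma_j$, which is BE$^+$ \eqref{eq:BE} throughout $\R_+^3$.
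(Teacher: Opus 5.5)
Your proof is correct. The global half is essentially the paper's Proposition \ref{prop:injectivityBE}. Injectivity, via the swap $(\lambda_1,\lambda_2,\lambda_3)\mapsto(\lambda_2,\lambda_1,\lambda_3)$, forbids $\widehat\sigma_1=\widehat\sigma_2$ on the convex set $\{\lambda_1>\lambda_2\}$, so the difference has constant sign there. You phrase this as sign-constancy on a connected set; the paper phrases it as a contradiction along a segment.

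The local half is a genuinely different argument. The paper uses continuity of $\D\sigma$ to get positive definiteness of $\sym\D\sigma$ on a convex neighbourhood of $\id$, hence strict matrix-monotonicity there. It then applies the swap argument of Lemma \ref{lemma:vectorMonotonicityImpliesSameAlgebraicOrder} at a nearby state with three distinct stretches. Its footnote alternatively uses the spectral derivative in an off-diagonal direction at such a state. You instead start at the degenerate point $(1,1,1)$, where $g=\widehat\sigma_1-\widehat\sigma_2$ vanishes by symmetry. You read off the sign of $g$ just inside $\{\lambda_1>\lambda_2\}$ from its one-sided derivative along $H=\diag(1,-1,0)$, which is exactly the quadratic form in \eqref{eq:postiveDefiniteDSigma}.

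Your route has several advantages:
\begin{itemize}
\item It uses differentiability of $\sigma$ only at $\id$, with continuity elsewhere, so the $C^1$ hypothesis can be weakened.
\item It works pair by pair and needs no state with simple eigenvalues.
\item It avoids steps the paper leaves implicit: that positive definiteness of $\sym\D\sigma$ on a convex set yields strict monotonicity, and that the neighbourhood contains a state together with its eigenvalue swap.
\end{itemize}
The paper's route, in exchange, gives the extra conclusion that $\sigma$ is strictly monotone near the natural state. This ties the result to the paper's monotonicity theme and reuses its earlier lemmas.
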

\begin{proof}
	By continuity of the tangent operator $V\mapsto \D\sigma(V)$ and the positive definiteness at $V=\id$, there exists a convex neighbourhood of $\id$ where $\sym \D\sigma(V)$ remains positive definite and thus $\sigma$ is strictly monotone in this neighbourhood.
	Now, choose any deformation state $\Vbar\in\Symp(3)$ with three distinct stretches $\lambda_1>\lambda_2>\lambda_3$ therein. 
	Since $\sigma$ is strictly monotone in this neighbourhood, the ordering argument of Lemma \ref{lemma:vectorMonotonicityImpliesSameAlgebraicOrder} implies that the corresponding stresses are also strictly ordered, $\sigma_1>\sigma_2>\sigma_3$, i.e.\ the strong Baker-Ericksen inequalities hold locally.\footnote{Alternatively, one can use the classical spectral derivative formula \cite{chadwick1971theorem} for the Fr\'echet derivative of \eqref{eq:postiveDefiniteDSigma} in the direction $H=e_i\otimes e_j+e_j\otimes e_i$ (only changing the eigenvectors and not the eigenvalues) to arrive at $(\sigma_i-\sigma_j)(\lambda_i-\lambda_j)>0$.}
	Finally, Proposition \ref{prop:injectivityBE} propagates them globally under continuity and injectivity of $V\mapsto\sigma(V)$.
\end{proof}
\begin{proposition}
\label{prop:injectivityBE}
	If the Cauchy stress response function $V\mapsto\sigma(V)$ is continuous and injective and satisfies the strong Baker-Ericksen inequalities at one $V\in\Symp(n)$ with only simple eigenvalues, then the strong BE-inequalities are satisfied everywhere.
\end{proposition}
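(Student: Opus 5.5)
The plan is a connectedness argument in the eigenvalue representation. Injectivity rules out two principal stresses coinciding while the corresponding stretches differ, and continuity then carries the sign of $\sigma_i-\sigma_j$ from the given point to every admissible point. Throughout, let $\sigmahat\col\R_+^n\to\R^n$ be the symmetric vector function with $\sigma=\Sigma_{\sigmahat}$. It satisfies $\sigmahat(\lambda)=\diag\inv(\sigma(\diag(\lambda)))$, so $\sigmahat$ is continuous because $\sigma$ is. By the symmetry of $\sigmahat$ it suffices to work with ordered stretches $\lambda\in(\R_+^n)_\downarrow$ and to fix one pair of indices $i<j$.

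\textbf{Step 1 (injectivity excludes ties).} I will show that $\lambda_i\neq\lambda_j$ implies $\sigmahat_i(\lambda)\neq\sigmahat_j(\lambda)$. Suppose instead that $\lambda_i\neq\lambda_j$ but $\sigmahat_i(\lambda)=\sigmahat_j(\lambda)$, and put $V=\diag(\lambda)$. Let $Q\in\On$ be the permutation matrix that swaps $e_i$ and $e_j$. Then $Q^TVQ$ is diagonal with the entries $\lambda_i,\lambda_j$ interchanged, so $Q^TVQ\neq V$. By isotropy,
\[
\sigma(Q^TVQ)=Q^T\diag(\sigmahat(\lambda))\,Q=\diag(\sigmahat(\lambda))=\sigma(V),
\]
where the middle equality holds because the two interchanged diagonal entries are equal. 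This contradicts injectivity. This is the same swap used in Lemma \ref{lemma:vectorMonotonicityImpliesSameAlgebraicOrder}, but here it is exploited through injectivity instead of monotonicity.

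\textbf{Step 2 (connectedness).} Consider the set
\[
K_{ij}=\{\lambda\in\R_+^n : \lambda_1\geq\dotsc\geq\lambda_n,\ \lambda_i>\lambda_j\}.
\]
It is an intersection of half-spaces, one of them open, with the open orthant, so it is convex and hence connected. It contains the ordered eigenvalue vector $\lambda^0$ of the given $V$, whose eigenvalues are simple. The function $g_{ij}(\lambda)=\sigmahat_i(\lambda)-\sigmahat_j(\lambda)$ is continuous on $K_{ij}$ and, by Step 1, never vanishes there. By the intermediate value theorem along segments in $K_{ij}$, $g_{ij}$ has constant sign on $K_{ij}$. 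The hypothesis gives $g_{ij}(\lambda^0)>0$, so $g_{ij}>0$ on all of $K_{ij}$.

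\textbf{Step 3 (conclusion).} Doing this for every pair $i<j$ gives $\lambda_i>\lambda_j\Rightarrow\sigma_i>\sigma_j$ for all ordered $\lambda$, including those that have ties in other positions. The symmetry of $\sigmahat$ transfers the statement to arbitrary labellings of the eigenvalues, which yields \eqref{eq:BE} at every $V\in\Symp(n)$.

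The main point needing care is Step 1: the argument must really produce two distinct matrices with the same stress, which is exactly why $\lambda_i\neq\lambda_j$ is required. The other subtlety is choosing a connected domain on which the relevant pair stays separated. Working pairwise with $K_{ij}$, instead of with the set of fully simple spectra, is what lets the argument also cover states with partially repeated stretches.
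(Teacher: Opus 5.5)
Your proof is correct and follows essentially the same route as the paper. Both use the coordinate-swap argument showing that injectivity forbids $\sigma_i(\lambda)=\sigma_j(\lambda)$ when $\lambda_i\neq\lambda_j$, followed by an intermediate-value argument along straight segments in a convex set that preserves $\lambda_i>\lambda_j$. The differences are cosmetic: you argue directly via constant sign on the ordered convex set $K_{ij}$ rather than by contradiction on the half-space $\{x_1>x_2\}$ after relabeling, and you explicitly justify the continuity of $\widehat\sigma$, which the paper leaves implicit.
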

\begin{proof}
	Let
	\begin{align}
		\widehat\sigma\col\Rp^n\to\R^n\,,\qquad \lambda = (\lambda_1,\dotsc,\lambda_n)\mapsto (\sigma_1(\lambda),\dotsc,\sigma_n(\lambda))
	\end{align}
	denote the representation of $\sigma$ in terms of the principal stretches. Then $\widehat\sigma$ is injective as well and thus
	\begin{equation}\label{eq:semiInvertibility}
		\lambda_i \neq \lambda_j \qquad\implies\qquad \sigma_i(\lambda) \neq \sigma_j(\lambda)\,,
	\end{equation}
	since otherwise
	\begin{align}
		\widehat\sigma(\lambda) = (\sigma_1(\lambda),\dotsc,\sigma_n(\lambda)) = (\sigma_1(\lambda),\dotsc,\sigma_n(\lambda))^{ij} = \widehat\sigma(\lambda^{ij})\,,
	\end{align}
	where, for $x\in\Rp^n$, $x^{ij}$ denotes the vector obtained by interchanging the $i$-th and $j$-th component values of $x$ here and henceforth.
	
	Now let $\lambda\in\Rp^n$ be a vector of principal stretches corresponding to some $V\in\Symp(n)$ with only simple eigenvalues such that the strong BE-inequalities are satisfied at $V$. Then
	\begin{align}
		(\lambda_i-\lambda_j)\. (\sigma_i(\lambda)-\sigma_j(\lambda)) > 0\qquad\forall\; i\neq j\,.
	\end{align}
	After possible reordering, we can assume without loss of generality that $\lambda_1>\lambda_2$.
	
	Now assume that $\sigma$ does not satisfy the strong BE-inequalities at some $\widetilde V\in\Symp(n)$ with eigenvalues $\widetilde\lambda_1,\dotsc,\widetilde\lambda_n$. Then for some pair $i\neq j$ using injectivity of $\sigmahat$, it holds
	\begin{align}
		(\widetilde\lambda_i-\widetilde\lambda_j)\. (\sigma_i(\widetilde\lambda)-\sigma_j(\widetilde\lambda)) < 0\,.
	\end{align}
	After possible reordering, we can assume without loss of generality that $i=1$, $j=2$ and that $\widetilde\lambda_1>\widetilde\lambda_2$.
	
	Now, consider the mappings
	\begin{equation}
		\varphi\col\Rp^n\to\R\,, \quad\varphi(x) = \sigma_1(x)-\sigma_2(x)
		\qquad\text{and}\qquad
		\gamma\col[0,1]\to\Rp^n\,,\quad\gamma(t)=t\.\widetilde\lambda+(1-t)\.\lambda\,.
	\end{equation}
	Then
	\begin{align}
		\varphi(\gamma(0)) = \varphi(\lambda) > 0
		\qquad\text{and}\qquad
		\varphi(\gamma(1)) = \varphi(\widetilde\lambda) < 0\,,
	\end{align}
	and since $\varphi\circ\gamma$ is continuous, there must be $\widehat\lambda\in\Rp^n$ such that $\varphi(\widehat\lambda)=0$. Since the set
	\begin{align}
		\{x\in\Rp^n \setvert x_1 > x_2\}
	\end{align}
	is convex, the straight line $\gamma\col[0,1]\to\Rp^n$ connecting $\lambda$ and $\widetilde\lambda$ conserves the strict ordering of the first two components. Thus $\widehat\lambda_1>\widehat\lambda_2$ and therefore
	\begin{align}
		\widehat\lambda_1 \neq \widehat\lambda_2
		\qquad\text{and}\qquad
		\sigma_1(\widehat\lambda) = \sigma_2(\widehat\lambda)\,,
	\end{align}
	in contradiction to \eqref{eq:semiInvertibility}.
\end{proof}
\begin{remark}\label{remark:correctLinearResponse}
	The requirement of satisfying the strong BE-inequalities at a single point with simple eigenvalues is always satisfied if the stress response is Legendre-Hadamard elliptic at $V=\id$ because strict ellipticity at the identity implies that the principal stresses $\sigma$ are ordered compatibly to the principal stretches $\lambda$ in a neighborhood of the reference configuration. This is nothing else but the correct linear elastic response in the natural state, cf.\ Appendix \ref{appendix:linearElasticity}.
	Indeed for linear elasticity $\mu>0$ is already sufficient for (BE$^+$), while ellipticity also requires $2\.\mu+\lambda>0$ and monotonicity of the stress-strain relation is equivalent to $\mu>0$ and $2\.\mu+3\.\lambda>0$, cf.\ Appendix~\ref{appendix:linearElasticity}.
\end{remark}
\begin{remark}
	Condition \eqref{eq:semiInvertibility} is also implied by (and is, in fact, equivalent to) the \emph{semi-invertibility} of the Cauchy stress response \cite{dunn1984elastic, agn_thiel2019empirical}. The requirement of invertibility in Proposition \ref{prop:injectivityBE} can therefore be weakened to semi-invertibility.
\end{remark}
\begin{example}\label{example:counterInvertibility}
	It is interesting (and for us surprising) to note that the invertibility (i.e.\ non-singularity) of $\D f(\lambda)\col\R^n\to\R^n$ does not immediately imply the invertibility of the derivative $\D\Sigma_f\col\Sym(n)\to\Sym(n)$ at $D =\diag(\lambda)$. For example, we consider the symmetric function
	\begin{equation}
		f(\lambda_1,\lambda_2)=\matr{\lambda_1^2+\lambda_2\\\lambda_1+\lambda_2^2},\qquad \D f(\lambda_1,\lambda_2)=\matr{2\.\lambda_1&1\\1&2\.\lambda_2}.
	\end{equation}
	Then for 
	\begin{equation}
		\lambda=\matr{1\\0}\qquad\implies\qquad \D f(1,0)=\matr{2&1\\1&0},\qquad \left[\D f(1,0)\right]\inv=\matr{0&1\\1&-2},
	\end{equation}
	i.e.\ $\D f(1,0)$ is invertible. We show that $\D\Sigma_f(B)$ is not invertible at $B=\diag(1,0)$ because
	\begin{equation}
		\D\Sigma_f\left(\diag(1,0)\right).\matr{0&-1\\-1&0}=\matr{0&0\\0&0}.\label{eq:invertibilityCounterExample}
	\end{equation}

	For $t\in\R$, we consider the rotation matrix
	\begin{equation}
		Q_t=\matr{\cos t&-\sin t\\\sin t&\cos t}\qquad\text{with}\qquad Q_0=\id\,.
	\end{equation}

	With Lemma \ref{lemma:existenceAndUniquenessOfMatrixFunction}, we compute
	\begin{equation}
		\Sigma_f(Q_t^T\.D\.Q_t)=Q_t^T\.\Sigma_f(\diag(1,0))\.Q_t=Q_t^T\.\diag(f(1,0))\.Q_t=Q_t^T\.\id\.Q_t=\id\,.
	\end{equation}
	Thus the expression is constant and it holds
	\begin{align}
		0&=\ddt\left.\Sigma_f(Q_t^T\.D\.Q_t)\right|_{t=0}
		=\D\Sigma_f\left(Q_t^T\.D\.Q_t\bigr|_{t=0}\right).\left(\ddt\left[Q_t^T\.D\.Q_t\right]_{t=0}\right)\\
		&=\D\Sigma_f(\diag(1,0)).\left(\ddt\left[Q_t^T\.\diag(1,0)\.Q_t\right]_{t=0}\right).\notag
	\end{align}
	Last, we show that
	\begin{align}
		\ddt\left[Q_t^T\.\diag(1,0)\.Q_t\right]_{t=0}&=
		\ddt\left[\matr{\cos t&\sin t\\-\sin t&\cos t}\matr{1&0\\0&0}\matr{\cos t&-\sin t\\\sin t&\cos t}\right]_{t=0}\notag\\
		&=\ddt\left[\matr{\cos t&\sin t\\-\sin t&\cos t}\matr{\cos t&-\sin t\\0&0}\right]_{t=0}\notag\\
		&=\ddt\left[\matr{(\cos t)^2&-\sin t\.\cos t\\-\sin t\.\cos t&(\sin t)^2}\right]_{t=0}\notag\\
		&=\left.\matr{-2\.\cos t\.\sin t&-(\cos t)^2+(\sin t)^2\\-(\cos t)^2+(\sin t)^2&2\.\sin t\.\cos t}\right|_{t=0}
		=\matr{0&-1\\-1&0}
	\end{align}
	which concludes \eqref{eq:invertibilityCounterExample}. In addition, we note that this example also counts for the potential case with $\fpot\col\R^2\to\R$ and
	\begin{equation}
		\fpot(\lambda_1,\lambda_2)=\frac13\.\lambda_1^3+\frac13\.\lambda_2^3+\lambda_1\.\lambda_2\qquad\implies\qquad f=\nabla\fpot\,.
	\end{equation}
\end{example}
It should be observed that in general, this loss of invertibility of $\D\Sigma_f$ necessarily emerges in cases where $f(\lambda_1,\lambda_2)=(\sigma_1,\sigma_2)$ with $\lambda_1\neq\lambda_2$ but $\sigma_1=\sigma_2$, which would be excluded by the strong Baker-Ericksen inequalities. Note that in the above example, the eigenvalues of $Q_t^TD\.Q_t$  remain constant for $D=\diag(\lambda_1,\lambda_2)$, while the eigenvector basis is rotated according to $Q_t$. Due to the double eigenvalue $\sigma_1=\sigma_2$ of $\Sigma_f(D)$, rotating the eigenvector basis according to $Q_t^T\Sigma_f(D)\.Q_t=\Sigma_f(Q_t^TD\.Q_t)$ has no effect, thus a \enquote{linearized eigenvector rotation} changes $D$, but not $\Sigma_f(D)$, which leads to the zero directional derivative in \eqref{eq:invertibilityCounterExample}. The derivative $\D f(\lambda_1,\lambda_2)$, on the other hand, linearizes only the change of eigen\textit{values} and might (as is the case here) be invertible. Note, however, that $f\colon\R^2\to\R^2$ itself is not a bijective function in the general setting described above, i.e.\ where $\lambda_1\neq\lambda_2$ but $\sigma_1=\sigma_2$, since $f(\lambda_1,\lambda_2)=(\sigma_1,\sigma_2)=(\sigma_2,\sigma_1)=f(\lambda_2,\lambda_1)$; in this case, $f(1,0)=f(0,1)=(1,1)$.

%
%
%
%
%
\section{Conclusion}
An overview of the most important statements between matrix- and vector-representation is shown in Table \ref{fig:conclusion1} for the potential case as well as in Table \ref{fig:conclusion2} for the non-potential case.

\begin{table}[h!]
		\begin{tabular}{C{0.05\textwidth} L{0.28\textwidth}|C{0.15\textwidth}|L{0.42\textwidth}}
		&matrix-representation &  & vector-representation\\
		\hline
		\rdelim\{{13}{0cm}&$V\to W(V)$ (strictly) convex	& $\begin{matrix}\text{Davis-Ball \cite{davis1957all,ball1976convexity}}\\\iff\end{matrix}$		& $(\lambda_1,			\lambda_2,\lambda_3)\to \fpot(\lambda_1,\lambda_2,\lambda_3)$ (strictly) convex\\
		\cline{2-4}
		\rdelim.{13}{1cm}[\rotatebox{90}{potential case}]&$V\to \D_VW(V)$ \mbox{(strictly) monotone}	& $\begin{matrix}\text{Hill \cite{hill1968constitutive}}\\\iff\end{matrix}$		& $(\lambda_1,\lambda_2,\lambda_3)\to\nabla\fpot(\lambda_1,\lambda_2,\lambda_3)\in\R^3$ \mbox{(strictly) monotone}\\
		\cline{2-4}
		&$W\col\Sym(3)\cong\R^6\to\R$ differentiable	& $\begin{matrix}\text{Lewis \cite{lewis1996derivatives}}\\\iff\end{matrix}$		& $\fpot\col\R^3\to\R$\newline differentiable\\
		\cline{2-4}
		&$\D^2_VW\in\Symp(6)$ everywhere & $\begin{matrix}\text{Hill \cite{hill1968constitutive}}\\\iff\end{matrix}$		& $\D\.\grad\fpot\in\Symp(3)$ everywhere\\
		\cline{2-4}
		&$\D W\col\Sym(3)\cong\R^6\to\R^6$ invertible	& $\begin{matrix}\text{Theorem \ref{theorem:invertibility}}\\\iff\end{matrix}$		& $\nabla \fpot\col\R^3\to\R^3$ invertible\\
		\cline{2-4}
		&$\det_{\R^{6\times 6}}\D^2W\neq 0$	& $\begin{matrix}\text{Example \ref{example:counterInvertibility}}\\\centernot\iff\end{matrix}$		& $\det_{\R^{3\times 3}}\.\D\.\nabla \fpot\neq 0$
	\end{tabular}
	\caption{Comparison between matrix- and vector-representation of various conditions discussed in this paper for the potential case with $W\col\Sym(3)\cong\R^6\to\R$ and $\fpot\col\R^3\to\R$ and $W(V)=\fpot(\lambda_1,\lambda_2,\lambda_3)$.}\label{fig:conclusion1}
\end{table}
\begin{table}[h!]
	\begin{tabular}{C{0.05\textwidth} L{0.28\textwidth}|C{0.15\textwidth}|L{0.42\textwidth}}
		&matrix-representation &  & vector-representation\\
		\hline
		\rdelim\{{10}{0cm}&$V\to\Sigma_{f}(V)$ \mbox{(strictly) monotone}	& $\begin{matrix}\text{Theorem \ref{theorem:mainResult}}\\\iff\\\text{and Hill \cite{hill1970constitutive}}\end{matrix}$		& $(\lambda_1,\lambda_2,\lambda_3)\to f(\lambda_1,\lambda_2,\lambda_3)\in\R^3$ \newline (strictly) monotone\\
		\cline{2-4}
		\rdelim.{10}{1cm}[\rotatebox{90}{non-potential case}]&$\Sigma_{f}\in C^k(\Sym(3))$	& $\begin{matrix}\text{\v{S}ilhav{\`y} \cite{silhavy2013mechanics}}\\\iff\end{matrix}$	& $f\in C^k(\R^3)$\\ 
		\cline{2-4}
		&$\Sigma_{f}\col\Sym(3)\cong\R^6\to\R^6$ invertible	& $\begin{matrix}\text{Theorem \ref{theorem:invertibility}}\\\iff\end{matrix}$		& $f\col\R^3\to\R^3$ invertible\\
		\cline{2-4}
		&$\sym \D\Sigma_{f}\in\Symp(6)$	everywhere & $\begin{matrix}\text{future work}\\\iff\end{matrix}$		& $\sym \D f\in\Symp(3)$ everywhere\\
		\cline{2-4}
		&$\det_{\R^{6\times 6}}\D\Sigma_{f}\neq 0$	& $\begin{matrix}\text{Example \ref{example:counterInvertibility}}\\\centernot\iff\end{matrix}$	& $\det_{\R^{3\times 3}}\D f\neq 0$
	\end{tabular}
	\caption{Comparison between matrix- and vector-representation of various conditions discussed in this paper for the non-potential case with $\Sigma_f\col\Sym(3)\cong\R^6\to\Sym(3)\cong\R^6$ and $f\col\R^3\to\R^3$ with $	\Sigma(Q^T\diag(\lambda_1,\dotsc,\lambda_n)\.Q) = Q^T\diag(f(\lambda_1,\dotsc,\lambda_n))\.Q$.}\label{fig:conclusion2}
\end{table}

%
%
%
%
\section*{Acknowledgement}
This paper has been on our desk for a long time.
We thank Chandrashekhar S.\ Jog (Indian Institute of Science) for continued discussions on the meaning and interpretation of the TSTS-M$^+$ inequality which stimulated part of this research, Ray Ogden for help in clarifying Rodney Hill's development and all our friends and colleagues whom we contacted over the years as to whether our main theorem is already known.

\footnotesize
\section{References}
\printbibliography[heading=none]

@article{agn_neff2015exponentiatedI,
	title={The exponentiated {H}encky-logarithmic strain energy. {P}art {I}: {C}onstitutive issues and rank-one convexity},
	author={Neff, Patrizio and Ghiba, Ionel-Dumitrel and Lankeit, Johannes},
	journal={Journal of Elasticity},
	volume={121},
	number={2},
	pages={143--234},
	year={2015},
	publisher={Springer},
	doi={10.1007/s10659-015-9524-7},
	class =	{mathematics}
}

@article{agn_thiel2019empirical,
  title =	{Do we need {T}ruesdell's empirical inequalities? {O}n the coaxiality of stress and stretch},
  author =	{Thiel, Christian and Voss, Jendrik and Martin, Robert J. and Neff, Patrizio},
  journal =	{International Journal of Non-Linear Mechanics},
  year =	{2019},
  publisher={Elsevier},
  note =	{\availableatarxiv{1812.03053}},
  doi=		{10.1016/j.ijnonlinmec.2019.02.004},
  class =	{mathematics}
}

@article{ball1976convexity,
  title =		{Convexity conditions and existence theorems in nonlinear elasticity},
  author =		{Ball, J. M.},
  journal =		{Archive for Rational Mechanics and Analysis},
  volume =		{63},
  number =		{4},
  pages =		{337--403},
  year =		{1976},
  publisher =	{Springer}
}

@article{truesdell1975inequalities,
  title =		{Inequalities sufficient to ensure semi-invertibility of isotropic functions},
  author =		{Truesdell, C. and Moon, H.},
  journal =		{Journal of Elasticity},
  volume =		{5},
  number =		{34},
  year =		{1975},
  publisher =	{Springer}
}

@article{dunn1984elastic,
  title =		{Elastic materials of coaxial type and inequalities sufficient to ensure strong ellipticity},
  author =		{Dunn, J. E.},
  journal =		{International Journal of Solids and Structures},
  volume =		{20},
  number =		{5},
  pages =		{417--427},
  year =		{1984},
  publisher =	{Elsevier}
}

@article{friedland1981convex,
  title =		{Convex spectral functions},
  author =		{Friedland, Shmuel},
  journal =		{Linear and Multilinear Algebra},
  volume =		{9},
  number =		{4},
  pages =		{299--316},
  year =		{1981},
  publisher =	{Taylor \& Francis}
}

@article{ogden1972large,
  title =		{Large deformation isotropic elasticity-on the correlation of theory and experiment for incompressible rubberlike solids},
  author =		{Ogden, R. W.},
  journal =		{Proceedings of the Royal Society of London. A. Mathematical and Physical Science},
  volume =		{326},
  number =		{1567},
  pages =		{565--584},
  year =		{1972}
}

@article{coleman1959thermostatics,
  title={On the thermostatics of continuous media},
  author={Coleman, Bernard D and Noll, Walter},
  journal={Archive for Rational Mechanics and Analysis},
  volume={4},
  number={1},
  pages={97--128},
  year={1959},
  publisher={Springer}
}

@article{davis1957all,%Original von Davis Chandler
  title={All convex invariant functions of hermitian matrices},
  author={Davis, Chandler},
  journal={Archiv der Mathematik},
  volume={8},
  number={4},
  pages={276--278},
  year={1957},
  publisher={Springer}
}

@article{rivin2002another,%Alternativer Beweis
  title={Another simple proof of a theorem of Chandler Davis},
  author={Rivin, Igor},
  journal={\arxivjournal{math/0208223}},
  year={2002}
}

@article{grabovsky2005generalization,%Verallgemeinerung auf Gruppen
  title={A generalization of the Chandler Davis convexity theorem},
  author={Grabovsky, Yury and Hijab, Omar},
  journal={Advances in Applied Mathematics},
  volume={34},
  number={1},
  pages={192--212},
  year={2005},
  publisher={Elsevier}
}

@article{lewis1996group,%Verweis auf Davis und Ball bei Theorem 8.1
  title={Group invariance and convex matrix analysis},
  author={Lewis, Adrian Stephen},
  journal={SIAM Journal on Matrix Analysis and Applications},
  volume={17},
  number={4},
  pages={927--949},
  year={1996},
  publisher={SIAM}
}

@article{hill1968constitutive,
	title={On constitutive inequalities for simple materials—I},
	author={Hill, Rodney},
	journal={Journal of the Mechanics and Physics of Solids},
	volume={16},
	number={4},
	pages={229--242},
	year={1968},
	publisher={Elsevier}
}

@inproceedings{hill1970constitutive,%Hill nicht Potentzial ab S.466
  title={Constitutive inequalities for isotropic elastic solids under finite strain},
  author={Hill, R},
  booktitle={Proceedings of the Royal Society of London A: Mathematical, Physical and Engineering Sciences},
  volume={314},
  number={1519},
  pages={457--472},
  year={1970},
  organization={The Royal Society}
}

@inproceedings{theobald1975inequality,%Abschätzung durch geordnete Eigenwerte
  title={An inequality for the trace of the product of two symmetric matrices},
  author={Theobald, C Mo},
  booktitle={Mathematical Proceedings of the Cambridge Philosophical Society},
  volume={77},
  number={02},
  pages={265--267},
  year={1975},
}

@BOOK{antman,%Nachschlagewerk p.407,470
  AUTHOR       = {Antman, Stuart S.},
  TITLE        = {Nonlinear Problems of Elasticity},
  PUBLISHER    = {Springer},
  YEAR         = {2005},
  ADDRESS      = {New York},
  Edition	=	{2}
}

@book{ogden1997non,
  title={Non-Linear Elastic Deformations},
  author={Ogden, Raymond W},
  year={1997},
  publisher={Courier Corporation}
}

@article{leblond1992constitutive,
  title={A constitutive inequality for hyperelastic materials in finite strain},
  author={Leblond, JB},
  journal={European Journal of Mechanics. A. Solids},
  volume={11},
  number={4},
  pages={447--466},
  year={1992},
  publisher={Elsevier}
}

@article{birkhoff1946three,%Birkhoff-von-Neumann theorem
  title={Three observations on linear algebra},
  author={Birkhoff, Garrett},
  journal={Universidad Nacional de Tucum{\'a}n Revista Serie A },
  volume={5},
  pages={147--151},
  year={1946}
}

@article{von1953certain,%Birkhoff-von-Neumann theorem
  title={A certain zero-sum two-person game equivalent to the optimal assignment problem},
  author={von Neumann, John},
  journal={Contributions to the Theory of Games},
  volume={2},
  pages={5--12},
  year={1953}
}

@article{lewis1996derivatives,%W und Phi differentable
  title={Derivatives of spectral functions},
  author={Lewis, Adrian Stephen},
  journal={Mathematics of Operations Research},
  volume={21},
  number={3},
  pages={576--588},
  year={1996},
  publisher={INFORMS}
}

@article{weber2010davis,
  title={Davis’ convexity theorem and extremal ellipsoids},
  author={Weber, Matthias J and Schr{\"o}cker, Hans-Peter},
  journal={Contributions to Algebra and Geometry},
  volume={51},
  number={1},
  pages={263--274},
  year={2010}
}

@article{lewisz1998hyperbolic,
	title={Hyperbolic polynomials and convex analysis},
	author={Bauschke, Heinz H and G{\"u}ler, Osman and Lewis, Adrian S and Sendov, Hristo S},
	journal={Canadian Journal of Mathematics},
	volume={53},
	number={3},
	pages={470--488},
	year={2001},
	publisher={Cambridge University Press}
}

@article{silhavy2015convexity,
  title={The convexity of $C$ to $h(\det C)$},
  author={Silhavy, M},
  journal={Technische Mechanik},
  volume={35},
  number={1},
  pages={60--61},
  year={2015}
}

@article{richter1958abschatzung,
  title={Zur Absch{\"a}tzung von Matrizennormen},
  author={Richter, Hans},
  journal={Mathematische Nachrichten},
  volume={18},
  number={1-6},
  pages={178--187},
  year={1958},
  publisher={Wiley Online Library}
}

@inproceedings{ogden1977inequalities,
  title={Inequalities associated with the inversion of elastic stress-deformation relations and their implications},
  author={Ogden, Raymond W},
  booktitle={Mathematical Proceedings of the Cambridge Philosophical Society},
  volume={81},
  number={02},
  pages={313--324},
  year={1977},
  organization={Cambridge University Press}
}

@article{mirsky1959trace,
  title={On the trace of matrix products},
  author={Mirsky, Leon},
  journal={Mathematische Nachrichten},
  volume={20},
  number={3-6},
  pages={171--174},
  year={1959},
  publisher={Wiley Online Library}
}

@article{neumann1937some,
  title={Some matrix inequalities and metrization of matrix space},
  author={von Neumann, J},
  journal={Tomsk University Review},
  volume={1},
  number={11},
  pages={286--300},
  year={1937}
}

@article{rivin2010golden,%Short proof
  title={Golden--Thompson from Davis},
  author={Rivin, Igor},
  journal={\arxivjournal{1010.2193}},
  year={2010},
}

@article{marques1982isotropie,
  title={Isotropie et convexit{\'e} dans l'espace des tenseurs sym{\^e}triques},
  author={Marques, MDPM and Moreau, JJ},
  journal={Travaux du Seminaire d'Analyse Convexe, Montpellier},
  volume={12},
  number={6},
  year={1982}
}

@book{silhavy2013mechanics,
  title={The Mechanics and Thermodynamics of Continuous Media},
  author={Silhavy, Miroslav},
  year={2013},
  publisher={Springer Science \& Business Media}
}

@article{neff2016geometry,
  title={Geometry of logarithmic strain measures in solid mechanics},
  author={Neff, Patrizio and Eidel, Bernhard and Martin, Robert J},
  journal={Archive for Rational Mechanics and Analysis},
  volume={222},
  number={2},
  pages={507--572},
  year={2016},
  publisher={Springer}
}

@article{hencky1929superpositionsgesetz,
  title={Das Superpositionsgesetz eines endlich deformierten relaxationsf{\"a}higen elastischen Kontinuums und seine Bedeutung f{\"u}r eine exakte Ableitung der Gleichungen f{\"u}r die z{\"a}he Fl{\"u}ssigkeit in der Eulerschen Form},
  author={Hencky, Heinrich},
  journal={Annalen der Physik},
  volume={394},
  number={6},
  pages={617--630},
  year={1929},
  publisher={Wiley Online Library}
}

@article{neff2014grioli,
  title={On Grioli’s minimum property and its relation to Cauchy’s polar decomposition},
  author={Neff, Patrizio and Lankeit, Johannes and Madeo, Angela},
  journal={International Journal of Engineering Science},
  volume={80},
  pages={209--217},
  year={2014},
  publisher={Elsevier}
}

@article{golden1965lower,
  title={Lower bounds for the Helmholtz function},
  author={Golden, Sidney},
  journal={Physical Review},
  volume={137},
  number={4B},
  pages={B1127},
  year={1965},
  publisher={APS}
}

@article{thompson1965inequality,
  title={Inequality with applications in statistical mechanics},
  author={Thompson, Colin J},
  journal={Journal of Mathematical Physics},
  volume={6},
  number={11},
  pages={1812--1813},
  year={1965},
  publisher={AIP}
}

@article{baker1954inequalities,
  title={Inequalities restricting the form of the stress-deformation relations for isotropic elastic solids and Reiner-Rivlin fluids},
  author={Baker, M and Ericksen, JL},
  journal={Journal of the Washington Academy of Sciences},
  volume={44},
  number={2},
  pages={33--35},
  year={1954},
  publisher={JSTOR}
}

@article{neff2014logarithmic,
  title={A logarithmic minimization property of the unitary polar factor in the spectral and Frobenius norms},
  author={Neff, Patrizio and Nakatsukasa, Yuji and Fischle, Andreas},
  journal={SIAM Journal on Matrix Analysis and Applications},
  volume={35},
  number={3},
  pages={1132--1154},
  year={2014},
  publisher={SIAM}
}

@article{martin2015some,
  title={Some remarks on the monotonicity of primary matrix functions on the set of symmetric matrices},
  author={Martin, Robert J and Neff, Patrizio},
  journal={Archive of Applied Mechanics},
  volume={85},
  number={12},
  pages={1761--1778},
  year={2015},
  publisher={Springer}
}

@article{spector2015note,
  title={A note on the convexity of C $\mapsto$ h(detC)},
  author={Spector, Scott J},
  journal={Journal of Elasticity},
  volume={118},
  number={2},
  pages={251--256},
  year={2015},
  publisher={Springer}
}

@article{johnson1994large,
  title={Large strain viscoelastic constitutive models for rubber, part I: Formulations},
  author={Johnson, AR and Quigley, CJ and Mead, JL},
  journal={Rubber Chemistry and Technology},
  volume={67},
  number={5},
  pages={904--917},
  year={1994}
}

@article{le1990fonctions,
  title={Sur les fonctions de matrices convexes et isotropes},
  author={Le Dret, H},
  journal={Comptes Rendus de l'Acad{\^e}mie des Sciences},
  volume={310},
  pages={617--620},
  year={1990}
}

@book{bhatia2013matrix,
  title={Matrix Analysis},
  author={Bhatia, Rajendra},
  volume={169},
  year={2013},
  publisher={Springer Science \& Business Media}
}

@incollection{rivlin1997stress,
  title={Stress-deformation relations for isotropic materials},
  author={Rivlin, Ronald Samuel and Ericksen, JL},
  booktitle={Collected Papers of RS Rivlin},
  pages={911--1013},
  year={1997},
  publisher={Springer}
}

@article{boyce2000constitutive,
  title={Constitutive models of rubber elasticity: a review},
  author={Boyce, Mary C and Arruda, Ellen M},
  journal={Rubber Chemistry and Technology},
  volume={73},
  number={3},
  pages={504--523},
  year={2000}
}

@article{richter1948isotrope,
  title={Das isotrope Elastizit{\"a}tsgesetz},
  author={Richter, H},
  journal={ZAMM-Journal of Applied Mathematics and Mechanics/Zeitschrift f{\"u}r Angewandte Mathematik und Mechanik},
  volume={28},
  number={7-8},
  pages={205--209},
  year={1948},
  publisher={Wiley Online Library}
}

@article{gerschgorin1931sa,
  title={Über die Abgrenzung der Eigenwerte einer Matrix},
  author={Gershgorin, Semyon Aranovich},
  journal={Bulletin de l'Acad\'emie des Sciences de l'URSS. Classe des Sciences Math\'ematiques },
  number={6},
  pages={749--754},
  year={1931},
}

@Article{Lewis03,
    Author = {Lewis, Adrian S},
    Title = {The mathematics of eigenvalue optimization},
    Journal = {Mathematical Programming},
    Volume = {97},
    Number = {1-2 (B)},
    Pages = {155-176},
    Year = {2003}
}

@InCollection{Lewis96,
    Author = {Lewis, Adrian S and Overton, Michael L},
    Title = {Eigenvalue optimization},
    BookTitle = {Acta Numerica Vol. 5},
    Pages = {149-190},
    Year = {1996},
    Publisher = {Cambridge University Press},
}

@Article{Lewis96b,
    Author = {Lewis, Adrian S},
    Title = {Convex analysis on the Hermitian matrices},
    Journal = {SIAM Journal on Optimization},
    Volume = {6},
    Number = {1},
    Pages = {164-177},
    Year = {1996}
}

@book{wang1973introduction,
	title={Introduction to Rational Elasticity},
	author={Wang, Chao-Cheng and Truesdell, Clifford},
	volume={1},
	year={1973},
	publisher={Springer Science \& Business Media}
}

@article{jog2013conditions,
   author={Jog, Chandrashekhar S. and Patil, Kunal},
  title={Conditions for the onset of elastic and material instabilities in hyperelastic materials},
  journal={Archive of Applied Mechanics},
    volume={83},
  pages={1-24},
  year={2013}
}

@book{jog2015continuum,
	title={Continuum Mechanics},
	author={Jog, Chandrashekhar S},
	volume={1},
	year={2015},
	publisher={Cambridge University Press}
}

@article{guo2006application,
	title={Application of a new constitutive model for the description of rubber-like materials under monotonic loading},
	author={Z. Guo and L.J. Sluys},
	journal={International Journal of Solids and Structures},
	volume={43},
	number={9},
	pages={2799--2819},
	year={2006}
}

@article{clayton2014analysis,
	title={Analysis of intrinsic stability criteria for isotropic third-order {G}reen elastic and compressible {N}eo-{H}ookean solids},
	author={J.D. Clayton and K.M. Bliss},
	journal={Mechanics of Materials},
	volume={68},
	pages={104--119},
	year={2014}
}

@article{lehmich2012convexity,
  title={On the convexity of the function {$C\rightarrow f (\text{det} C)$} on positive definite matrices},
  author={S. Lehmich and P. Neff and J. Lankeit},
  journal={ Mathematics and Mechanics of Solids},
  volume={19},
  pages={369-375},
  year={2014}
}

@book{hardy1952inequalities,
	title={Inequalities},
	author={Hardy, Godfrey Harold and Littlewood, John Edensor and P{\'o}lya, George},
	year={1952},
	publisher={Cambridge University Press}
}

@article{baaser2026hyperelastic,
	title={Hyperelastic stability landscape: A check for Hill stability of isotropic, incompressible hyperelasticity depending on material parameters},
	author={Baaser, Herbert},
	journal={Journal of Elasticity},
	volume={158},
	number={1},
	pages={8},
	year={2026},
	publisher={Springer}
}

@article{wollner2025search,
	title={In search of constitutive conditions in isotropic hyperelasticity: polyconvexity versus true-stress-true-strain monotonicity},
	author={Wollner, Maximilian P and Holzapfel, Gerhard A and Neff, Patrizio},
	journal={Journal of the Mechanics and Physics of Solids},
	pages={106465},
	year={2025},
	publisher={Elsevier}
}

@article{neff2025hypo,
	title={Hypo-elasticity, Cauchy-elasticity, corotational stability and monotonicity in the logarithmic strain},
	author={Neff, Patrizio and Holthausen, Sebastian and d’Agostino, Marco Valerio and Bernardini, Davide and Sky, Adam and Ghiba, Ionel-Dumitrel and Martin, Robert J},
	journal={Journal of the Mechanics and Physics of Solids},
	volume={202},
	pages={106074},
	year={2025},
	publisher={Elsevier}
}

@article{d2025constitutive,
	title={A constitutive condition for idealized isotropic Cauchy elasticity involving the logarithmic strain},
	author={d’Agostino, Marco Valerio and Holthausen, Sebastian and Bernardini, Davide and Sky, Adam and Ghiba, Ionel-Dumitrel and Martin, Robert J and Neff, Patrizio},
	journal={Journal of Elasticity},
	volume={157},
	number={1},
	pages={23},
	year={2025},
	publisher={Springer}
}

@article{ghiba2026polyconvexity,
	title={Polyconvexity implies Hill’s inequality in SL (2)},
	author={Ghiba, Ionel-Dumitrel and Wollner, Maximilian P and Neff, Patrizio},
	journal={European Journal of Mechanics-A/Solids},
	pages={106296},
	year={2026},
	publisher={Elsevier}
}

@article{klein2026polyconvexity,
	title={Polyconvexity does not imply true-stress-true-strain monotonicity in the incompressible three-dimensional case},
	author={Klein, Dominik K and Wollner, Maximilian P and Neff, Patrizio},
	journal={\arxivjournal{2607.06568}},
	year={2026}
}

@article{wollner2026concurrent,
	title={Concurrent enforcement of polyconvexity and true-stress-true-strain monotonicity in incompressible isotropic hyperelasticity: application to neural network constitutive models},
	author={Wollner, Maximilian P and Klein, Dominik K and Baaser, Herbert and Holzapfel, Gerhard A and Neff, Patrizio},
	journal={to appear in Journal of the Mechanics and Physics of Solids},
	note =	{\availableatarxiv{2605.20031}},
	year={2026}
}

@article{chadwick1971theorem,
	title={A theorem of tensor calculus and its application to isotropic elasticity},
	author={Chadwick, P and Ogden, RW334656},
	journal={Archive for Rational Mechanics and Analysis},
	volume={44},
	number={1},
	pages={54--68},
	year={1971},
	publisher={Springer}
}

@article{mihai2017characterize,
	title={How to characterize a nonlinear elastic material? A review on nonlinear constitutive parameters in isotropic finite elasticity},
	author={Mihai, L Angela and Goriely, Alain},
	journal={Proceedings of the Royal Society A: Mathematical, Physical and Engineering Sciences},
	volume={473},
	number={2207},
	pages={20170607},
	year={2017},
	publisher={The Royal Society Publishing}
}

@article{anssari2023large,
	title={Large isotropic elastic deformations: on a comprehensive model to correlate the theory and experiments for incompressible rubber-like materials},
	author={Anssari-Benam, Afshin},
	journal={Journal of Elasticity},
	volume={153},
	number={2},
	pages={219--244},
	year={2023},
	publisher={Springer}
}
\small
\begin{appendix}
\section{Appendix}
%
%
\subsection{Hill's algebraic proof for ordered sets of eigenvalues}\label{appendix:OriginalProofHill}
	At the first reading Hill \cite{hill1970constitutive} seems to prove the simple inequality chain \eqref{eq:ineqChain}
	\begin{align}
		\iprod{\Sigma_f(S)-\Sigma_f(T), S-T}\geq\iprod{f(x)-f(y),x-y}\geq 0\,.\tag{\ref{eq:ineqChain}}
	\end{align}
	 This interpretation is motivated based on Hill's index notation. In the following, we will show that this statement does not hold for the general case, but only for matrices with consistently sorted eigenvalue lists, i.e.\ decreasing order. However, it might be surmised that Hill did not intend to prove \eqref{eq:ineqChain}, but rather the similar proof to the potential case, given in Remark \ref{remark:HillPotential}.
	 
	Let $\sigma,\overline\sigma,\eps,\overline\eps\in\Sym(n)$ be given as arbitrary matrices\footnote{Here, the tensors $\eps$ and $\overline\eps$ are representatives of some nonlinear strain tensors, i.e.\ $\eps=\log V$ or $\eps=V-\id\,.$}. Hill calculates
	\begin{align}
		\iprod{\sigma-\overline\sigma,\eps-\overline\eps}&=\iprod{\sigma,\eps}-\iprod{\sigma,\overline\eps}-\iprod{\overline\sigma,\eps}+\iprod{\overline\sigma,\overline\eps}=\sum_i^n(\overline\sigma_i\.\overline\eps_i+\sigma_i\.\eps_i)-(\iprod{\overline\sigma,\eps}+\iprod{\sigma,\overline\eps})\,,\label{eq:Hill}
	\end{align}
	with $\sigma_i,\eps_i$ the eigenvalues of $\sigma,\eps\,.$ The step 
	\begin{align}
		\iprod{\sigma,\eps}=\sum_i^n\sigma_i\.\eps_i\,,\qquad\iprod{\overline\sigma,\overline\eps}=\sum_i^n\overline\sigma_i\.\overline\eps_i
	\end{align}
	only holds if $\sigma$ and $\eps$ as well as $\overline\sigma$ and $\overline\eps$ are simultaneously diagonalizable.
	However, every isotropic tensor function $\sigma(\eps)$ has this coaxiality property. Next, Hill uses the following statement
	\begin{align}
		\iprod{\overline\sigma,\eps}\leq\sum_i^n\overline\sigma_i\.\eps_i\qquad\text{and}\qquad\iprod{\sigma,\overline\eps}\leq\sum_i^n\sigma_i\.\overline\eps_i\,.\label{eq:HillsIncorrectInequality}
	\end{align}
	This statement only holds if all the eigenvalues $\sigma_i,\overline\sigma_i,\eps_i,\overline\eps_i$ are sorted in the same algebraic order, see Hardy \cite{hardy1952inequalities} as well \citet{theobald1975inequality} or \citet{richter1958abschatzung}\footnote{Hill does not refer to anyone but just indicates that these inequalities can be shown by direct calculations.}, but does not hold for the general case. We can compute a simple two-dimensional counterexample with unordered eigenvalues to disprove this statement
	\begin{align}
		&\overline\sigma=\matr{2&1\\1&2}=Q^T\diag(3,1)\.Q\,,\qquad\eps=\matr{1&0\\0&4}\label{eq:HillCounterExample2D}\\
		\implies\qquad&\iprod{\overline\sigma,\eps}=2\cdot 1+1\cdot 0+1\cdot 0+2\cdot 4=10\,,\qquad\sum_i^2\overline\sigma_i\.\eps_i=3\cdot 1+1\cdot 4=7\,.\notag
	\end{align}
	Obviously \eqref{eq:HillsIncorrectInequality} does not hold. If we restrict  $\sigma,\overline\sigma,\eps,\overline\eps\in\Sym(n)$ as matrices with eigenvalues sorted as
	\begin{align}
	\begin{matrix}
	\sigma_1&\geq&\sigma_2&\geq\dots\geq&\sigma_n\,,\\
	\eps_1&\geq&\eps_2&\geq\dots\geq&\eps_n\,,
	\end{matrix}\qquad\qquad\begin{matrix}
	\overline\sigma_1&\geq&\overline\sigma_2&\geq\dots\geq&\overline\sigma_n\,,\\
	\overline\eps_1&\geq&\overline\eps_2&\geq\dots\geq&\overline\eps_n\,,
	\end{matrix}\label{eq:OrderedHill}
	\end{align}
	Hill's statement and also his remaining proof remains correct. He calculates
	\begin{align}
		\iprod{\sigma-\overline\sigma,\eps-\overline\eps}&=\sum_i^n(\overline\sigma_i\.\overline\eps_i+\sigma_i\.\eps_i)-(\iprod{\overline\sigma,\eps}+\iprod{\sigma,\overline\eps})\notag\\
		&\geq\sum_i^n(\overline\sigma_i\.\overline\eps_i+\sigma_i\.\eps_i-\overline\sigma_i\.\eps_i-\sigma_i\.\overline\eps_i)=\iprod{\left(\begin{matrix}\sigma_1\\\vdots\\\sigma_n\end{matrix}\right)
		- \left(\begin{matrix}\overline\sigma_1\\\vdots\\\overline\sigma_n\end{matrix}\right),\,
\left(\begin{matrix}\eps_1\\\vdots\\\eps_n\end{matrix}\right)
		- \left(\begin{matrix}\overline\eps_1\\\vdots\\\overline\eps_n\end{matrix}\right)}\,.\label{eq:Hill2}
	\end{align}
	He argues that the right-hand side is non-negative due to vector-monotonicity, so the left-hand side must be non-negative, too. For  $\sigma,\overline\sigma,\eps,\overline\eps\in\Sym(n)$ as matrices with eigenvalues sorted as in \eqref{eq:OrderedHill} his proof is correct.
	
	In order to see that \eqref{eq:HillsIncorrectInequality} does not hold in the general case with unsorted eigenvalues, we can generate a simple two-dimensional counterexample to the literal interpretation. Consider
	\begin{align}
		\sigma(\eps)=\eps,\quad\matr{\eps_1\\\eps_2}=\matr{1\\0},\quad\matr{\overline\eps_1\\\overline\eps_2}=\matr{0\\1}\label{eq:Counterexample}
	\end{align}
	\[
		\text{and}\quad\eps=Q^T\diag(\eps_1,\eps_2)\.Q,\quad\overline\eps=\diag(\overline\eps_1,\overline\eps_2),\quad\sigma=Q^T\diag(\sigma_1,\sigma_2)\.Q,\quad\overline\sigma=\diag(\overline\sigma_1,\overline\sigma_2)\,.
	\]
	Take now $Q=\matr{\cos(\alpha)&-\sin(\alpha)\\\sin(\alpha)&\cos(\alpha)}=\matr{\frac{1}{\sqrt{2}} & -\frac{1}{\sqrt{2}}\\ \frac{1}{\sqrt{2}}& \frac{1}{\sqrt{2}}}$ as a rotation with $\alpha=\frac{\pi}{4}$, so that
	\begin{align}
		\eps=Q^T\diag(\eps_1,\eps_2)\.Q=\matr{\frac{\eps_1+\eps_2}{2}&\frac{-\eps_1+\eps_2}{2}\\\frac{-\eps_1+\eps_2}{2}&\frac{\eps_1+\eps_2}{2}}=\matr{\frac{1}{2}&-\frac{1}{2}\\-\frac{1}{2}&\frac{1}{2}}\,.
	\end{align}
	Then we compute
	\begin{align*}
		&\iprod{\sigma-\overline\sigma,\eps-\overline\eps}=\iprod{\matr{\frac{1}{2}&-\frac{1}{2}\\-\frac{1}{2}&\frac{1}{2}}-\matr{0&0\\0&1},\,\matr{\frac{1}{2}&-\frac{1}{2}\\-\frac{1}{2}&\frac{1}{2}}-\matr{0&0\\0&1}}=\left\|\matr{\frac{1}{2}&-\frac{1}{2}\\-\frac{1}{2}&-\frac{1}{2}}\right\|^2=1\,,\\
		&\text{while}\qquad\sum_{i=1}^2(\sigma_i-\overline\sigma_i)(\eps_i-\overline\eps_i)=(1-0)(1-0)+(0-1)(0-1)=2\,.
	\end{align*}
	Obviously, Hill's suggested inequality \eqref{eq:Hill2} is violated and
	\begin{align}
		\iprod{\Sigma_f(S)-\Sigma_f(T),\, S-T}\geq\iprod{f(x)-f(y),\,x-y}
	\end{align}
	cannot hold in general. Since our counterexample is potential, this shows that the estimate \eqref{eq:ineqChain} is already violated in the potential case.
	
	We have shown before that this simple inequality chain \eqref{eq:ineqChain} or  \eqref{eq:Hill2} does not hold in general, but it does so for the subset of matrices $\sigma,\overline\sigma,\eps,\overline\eps\in\Sym(n)$ with consistently sorted eigenvalue lists as in \eqref{eq:OrderedHill}. The literal unsorted reading of his proof fails
	because \eqref{eq:HillsIncorrectInequality} does not hold. That leads to the conclusion that Hill intends to consider only matrices with consistently sorted eigenvalue lists.
	In \cite[Appendix 1 Convex Functions]{ogden1997non} Ogden interprets Hill's proof for vector-monotonicity implies matrix-monotonicity and concludes that it holds for the subset of ordered eigenvalues, cf.\ Appendix \ref{appendix:HillOgdenProof}. Another possibility is, that Hill silently renames his indices, so that his eigenvalues are sorted in the right algebraic order, see the following Appendix \ref{appendix:ImprovedInterpretationHill}.
	
%
%
\subsection{Our interpretation for non-ordered eigenvalues}\label{appendix:ImprovedInterpretationHill}
	It might be surmised that Hill intends to sort his eigenvalues in the right algebraic order. If so, the index in \eqref{eq:HillsIncorrectInequality} should be renamed properly, so that we can use the correct nontrivial estimate \cite{hardy1952inequalities,richter1958abschatzung,theobald1975inequality}, see also \citet[p.171-174]{mirsky1959trace} and von \citet{neumann1937some}
	\begin{align}
		\iprod{\sigma,\overline\eps}\leq{\sum_{i=1}^n}{'}\sigma_i\.\overline\eps_i\,,\qquad\qquad\iprod{\overline\sigma,\eps}\leq{\sum_{i=1}^n}{'}\overline\sigma_i\.\eps_i\,,\label{eq:Theobald}
	\end{align}
	where in $\Sigma'$ it is understood that the eigenvalues on the right-hand side $\sigma_i,\overline\sigma_i,\eps_i,\overline\eps_i$ are sorted as in \eqref{eq:OrderedHill}.
	
	We name the repositioned sorted eigenvalues $\sort\sigma_i,\sort{\overline\sigma}_i,\sort\eps_i,{\widetilde\eps}_i,\;i\in\{1,\dotsc,n\}$ and rewrite \eqref{eq:Theobald} as
	\begin{align}
		\iprod{\overline\sigma,\eps}\leq\sum_{i=1}^n\sort{\overline\sigma}_i\.\sort\eps_i\qquad\text{and}\qquad\iprod{\sigma,\overline\eps}\leq\sum_{i=1}^n\sort\sigma_i\.{\widetilde\eps}_i\,.\label{eq:Theobald2}
	\end{align}
	As the next step we use that $\sigma=\sigma(\eps)$ with $\eps$ and also $\overline\sigma=\overline\sigma(\eps)$ with $\overline\eps$ are sorted in the same algebraic order as shown in Lemma \ref{lemma:vectorMonotonicityImpliesSameAlgebraicOrder}. Therefore, with the associative law we get
	\begin{align}
		\sum_{i=1}^n\sigma_i\.\eps_i=\sum_{i=1}^n\sort\sigma_i\.\sort\eps_i\qquad\text{and}\qquad\sum_{i=1}^n\overline\sigma_i\.\overline\eps_i=\sum_{i=1}^n\sort{\overline\sigma}_i\.{\widetilde\eps}_i\,.
	\end{align}
	We can now modify Hill's calculation in Appendix \ref{appendix:OriginalProofHill} to yield
	\begin{align}
		\iprod{\sigma-\overline\sigma,\eps-\overline\eps}&=\sum_{i=1}^n(\overline\sigma_i\.\overline\eps_i+\sigma_i\.\eps_i)-(\iprod{\overline\sigma,\eps}+\iprod{\sigma,\overline\eps})\notag\\
		&\geq\sum_{i=1}^n(\sort{\overline\sigma}_i\.{\widetilde\eps}_i+\sort\sigma_i\.\sort\eps_i-\sort{\overline\sigma}_i\.\sort\eps_i-\sort\sigma_i\.{\widetilde\eps}_i)=\iprod{\left(\begin{matrix}\sort\sigma_1\\\vdots\\\sort\sigma_n\end{matrix}\right)
		- \left(\begin{matrix}\sort{\overline\sigma}_1\\\vdots\\\sort{\overline\sigma}_n\end{matrix}\right),\,
\left(\begin{matrix}\sort\eps_1\\\vdots\\\sort\eps_n\end{matrix}\right)
		- \left(\begin{matrix}{\widetilde\eps}_1\\\vdots\\{\widetilde\eps}_n\end{matrix}\right)}\,.\label{eq:HillsCalculationNewArranged}
	\end{align}
	The right-hand side is non-negative due to vector-monotonicity, so the left-hand side must be non-negative, too.  
	Note again that the right-hand side of \eqref{eq:HillsCalculationNewArranged} is not \mbox{$\sum(\sigma_i-\overline\sigma_i)(\eps_i-\overline\eps_i)$}, i.e.\ with unordered vectors. It might be reasonably assumed that Hill intends the second interpretation of his proof, but forgot to rename his indices accurately. If not, Hill's proof is correct for $\sigma,\overline\sigma,\eps,\overline\eps\in\Sym(n)$ with eigenvalues $\sigma_i,\overline\sigma_i,\eps_i,\overline\eps_i$ sorted as in \eqref{eq:OrderedHill}.
	Thus, our improved interpretation of Hill's proof can be understood as an extension to arbitrary tensors without algebraic sorted eigenvalues.
	
	For our prior counterexample \eqref{eq:Counterexample} to the literal interpretation we have to reposition the eigenvalues
	\begin{align}
		\sort\sigma=\sort\eps=\eps=\matr{1\\0},\qquad\sort{\overline\sigma}={\widetilde\eps}=\matr{1\\0}.
	\end{align}
	Then we compute
	\begin{align}
		&\iprod{\sigma-\overline\sigma,\eps-\overline\eps}=\iprod{\matr{\frac{1}{2}&-\frac{1}{2}\\-\frac{1}{2}&\frac{1}{2}}-\matr{0&0\\0&1},\,\matr{\frac{1}{2}&-\frac{1}{2}\\-\frac{1}{2}&\frac{1}{2}}-\matr{0&0\\0&1}}=\left\|\matr{\frac{1}{2}&-\frac{1}{2}\\-\frac{1}{2}&-\frac{1}{2}}\right\|^2=1\\
		&\text{while}\qquad\sum_{i=1}^2(\sort\sigma_i-\sort{\overline\sigma}_i)(\sort\eps_i-{\widetilde\eps}_i)=(1-1)(1-1)+(0-0)(0-0)=0\,.\notag
	\end{align}
	Obviously, it holds the inequality
	\begin{align}
		\iprod{\sigma-\overline\sigma,\eps-\overline\eps}\geq\sum_{i=1}^n(\sort\sigma_i-\sort{\overline\sigma}_i)(\sort\eps_i-{\widetilde\eps}_i)\,.
	\end{align}

%
%
\subsection{Hill-Ogden proof}\label{appendix:HillOgdenProof}
	In \cite[Appendix 1 Convex Functions]{ogden1997non} Ogden interprets Hill's proof for vector-monotonicity implies matrix-monotonicity for $n=3$ and concludes that it holds for the subset of ordered eigenvalues similar to our first reading, cf.\ Appendix \ref{appendix:OriginalProofHill}.
	
	We consider the isotropic constitutive law
	\begin{align}
		\sigma=\sigma(S)\,,\qquad\sigma\col\Sym(3)\to\Sym(3)
	\end{align}
	in tensor form and the same law in principal stress-stretch form
	\begin{align}
		\widehat\sigma_i=\widehat\sigma_i(\lambda_1,\lambda_2,\lambda_3)\,,\qquad\widehat\sigma\col\R^3\to\R^3\,,\qquad\text{where}\quad\lambda_i=\lambda_i(S)\,.\label{appendix:ogdenbook2}
	\end{align}
	We assume that $\widehat\sigma$ is strictly vector-monotone
	\begin{align}
		\iprod{\left(\widehat\sigma(\lambda_1,\lambda_2,\lambda_3)-\widehat\sigma(\overline\lambda_1,\overline\lambda_2,\overline\lambda_3)\right),\;\matr{\lambda_1\\\lambda_2\\\lambda_3}-\matr{\overline\lambda_1\\\overline\lambda_2\\\overline\lambda_3}}&>0\notag\\
		\iff\qquad\sum_{i=1}^3\left(\widehat\sigma_i(\lambda_1,\lambda_2,\lambda_3)-\widehat\sigma_i(\overline\lambda_1,\overline\lambda_2,\overline\lambda_3)\right)(\lambda_i-\overline\lambda_i)&>0\label{appendix:ogdenbook3}
	\end{align}
	for arbitrary $\lambda\neq\overline\lambda\in\R^3$. We want to show that then
	\[
		\iprod{\sigma(S)-\sigma(T),\, S-T}>0\qquad\forall\;S,T\in\Sym(3)\,.
	\]
	\begin{proof}
		Let $S\neq T\in\Sym(3)$ be arbitrary given. We denote the in general unsorted real eigenvalues of $S$ and $T$ by
		\begin{equation}
			\lambdamax,\lambdamed,\lambdamin,\qquad\text{respectively}\qquad\olambdamax,\olambdamed,\olambdamin\,.
		\end{equation}
		Now we calculate
		\begin{align}
			\iprod{\sigma(S)-\sigma(T),\, S-T}=\iprod{\sigma(S),S}+\iprod{\sigma(T),T}-\iprod{\sigma(S),T}-\iprod{\sigma(T),S}\,.\label{appendix:ogdenbook4}
		\end{align}
		The first two terms can be immediately written as 
		\begin{equation}
			\iprod{\sigma(S),S}+\iprod{\sigma(T),T}=\iprod{\widehat\sigma(\lambdamax,\lambdamed,\lambdamin),\;\matr{\lambdamax\\\lambdamed\\\lambdamin}}+\iprod{\widehat\sigma(\olambdamax,\olambdamed,\olambdamin),\;\matr{\olambdamax\\\olambdamed\\\olambdamin}}\,,
		\end{equation}
		since $\sigma(S)$ is coaxial with $S$ and $\sigma(T)$ is coaxial with $T$, by isotropy. By strict vector-monotonicity \eqref{appendix:ogdenbook3} $\widehat\sigma$ is ordered as $\lambda=(\lambda_1,\lambda_2,\lambda_3)$ so the latter can be written as
		\begin{align}
			\iprod{\matr{\widehat{\sigma}_{\rm max}(\lambdamax,\lambdamed,\lambdamin)\\
				\widehat{\sigma}_{\rm med}(\lambdamax,\lambdamed,\lambdamin)\\
				\widehat{\sigma}_{\rm min}(\lambdamax,\lambdamed,\lambdamin)},\;\matr{\lambdamax\\\lambdamed\\\lambdamin}}+\iprod{\matr{\widehat{\sigma}_{\rm max}(\olambdamax,\olambdamed,\olambdamin)\\
				\widehat{\sigma}_{\rm med}(\olambdamax,\olambdamed,\olambdamin)\\
				\widehat{\sigma}_{\rm min}(\olambdamax,\olambdamed,\olambdamin)},\;\matr{\olambdamax\\\olambdamed\\\olambdamin}}\,.\label{appendix:ogdenbook5}
		\end{align}
		The two mixed terms in \eqref{appendix:ogdenbook4} can be estimated by Hardy \cite{hardy1952inequalities} as well as \citet{theobald1975inequality} or \citet{richter1958abschatzung} as 
		\begin{align}
		\begin{matrix}
			\iprod{\sigma(S), T}&\leq&\widehat{\sigma}_{\rm max}(\lambdamax,\lambdamed,\lambdamin)\olambdamax+\widehat{\sigma}_{\rm med}(\lambdamax,\lambdamed,\lambdamin)\olambdamed+\widehat{\sigma}_{\rm min}(\lambdamax,\lambdamed,\lambdamin)\olambdamin\\
			\iprod{\sigma(T), S}&\leq&\widehat{\sigma}_{\rm max}(\olambdamax,\olambdamed,\olambdamin)\lambdamax+\widehat{\sigma}_{\rm med}(\olambdamax,\olambdamed,\olambdamin)\lambdamed+\widehat{\sigma}_{\rm min}(\olambdamax,\olambdamed,\olambdamin)\lambdamin
		\end{matrix}\,.\label{appendix:ogdenbook6}
		\end{align}
		Putting \eqref{appendix:ogdenbook5} and \eqref{appendix:ogdenbook6} in \eqref{appendix:ogdenbook4} and using inequality \eqref{appendix:ogdenbook3} we obtain
		\begin{align}
			&\phantom.\hspace{-1cm}\iprod{\sigma(S)-\sigma(T),\, S-T}\notag\\
			\geq&\ \iprod{\matr{\widehat{\sigma}_{\rm max}(\lambdamax,\lambdamed,\lambdamin)\\
				\widehat{\sigma}_{\rm med}(\lambdamax,\lambdamed,\lambdamin)\\
				\widehat{\sigma}_{\rm min}(\lambdamax,\lambdamed,\lambdamin)},\;\matr{\lambdamax\\\lambdamed\\\lambdamin}}+\iprod{\matr{\widehat{\sigma}_{\rm max}(\olambdamax,\olambdamed,\olambdamin)\\
				\widehat{\sigma}_{\rm med}(\olambdamax,\olambdamed,\olambdamin)\\
				\widehat{\sigma}_{\rm min}(\olambdamax,\olambdamed,\olambdamin)},\;\matr{\olambdamax\\\olambdamed\\\olambdamin}}\notag\\
				&-\widehat{\sigma}_{\rm max}(\lambdamax,\lambdamed,\lambdamin)\olambdamax+\widehat{\sigma}_{\rm med}(\lambdamax,\lambdamed,\lambdamin)\olambdamed+\widehat{\sigma}_{\rm min}(\lambdamax,\lambdamed,\lambdamin)\olambdamin\notag\\
				&-\widehat{\sigma}_{\rm max}(\olambdamax,\olambdamed,\olambdamin)\lambdamax+\widehat{\sigma}_{\rm med}(\olambdamax,\olambdamed,\olambdamin)\lambdamed+\widehat{\sigma}_{\rm min}(\olambdamax,\olambdamed,\olambdamin)\lambdamin\notag\\
				=&\ \iprod{\matr{\widehat{\sigma}_{\rm max}(\lambdamax,\lambdamed,\lambdamin)\\
				\widehat{\sigma}_{\rm med}(\lambdamax,\lambdamed,\lambdamin)\\
				\widehat{\sigma}_{\rm min}(\lambdamax,\lambdamed,\lambdamin)}
				-\matr{\widehat{\sigma}_{\rm max}(\olambdamax,\olambdamed,\olambdamin)\\
				\widehat{\sigma}_{\rm med}(\olambdamax,\olambdamed,\olambdamin)\\
				\widehat{\sigma}_{\rm min}(\olambdamax,\olambdamed,\olambdamin)},\;\matr{\lambdamax\\\lambdamed\\\lambdamin}-\matr{\olambdamax\\\olambdamed\\\olambdamin}}\notag\\
				=&\ \iprod{\left(\widehat\sigma(\lambdamax,\lambdamed,\lambdamin)-\widehat\sigma(\olambdamax,\olambdamed,\olambdamin)\right),\;\matr{\lambdamax\\\lambdamed\\\lambdamin}-\matr{\olambdamax\\\olambdamed\\\olambdamin}}\geq0\,.\label{eq:HillOgdenProofStrict}
		\end{align}
		Note that $S\neq T$ does not automatically imply that the ordered eigenvalue vectors are distinct, i.e.\ the proof presented here only gives non-strict matrix-monotonicity in the last step \eqref{eq:HillOgdenProofStrict} and a separate equality-case argument is required, similar to the proof of Theorem \ref{theorem:mainResult}.
	\end{proof}
	
%
%
\subsection{A direct proof of our main lemma for the two-dimensional case}\label{appendix:mainLemma2D}
	For $n=k=2$, Lemma \ref{lemma:mainLemma}  i) can be shown by direct computation. 
	Consider the mapping
	\begin{align}
		\Psi\col\R\to\R\,,\quad \alpha\mapsto \Psi(Q(\alpha)) = \iprod{A,\, Q(\alpha)^T B\. Q(\alpha)} + \iprod{C,\, Q(\alpha)^T D\. Q(\alpha)}
	\end{align}
	with diagonal matrices $A,B,C,D\in\R^{2\times 2}$.
	The set $\SO(2)$ can be parameterized by
	\begin{align}
		Q\col \R\to\SO(2)\,,\qquad Q(\alpha) = \matr{\cos(\alpha)&-\sin(\alpha)\\ \sin(\alpha)&\cos(\alpha)}\,.
	\end{align}
	It therefore suffices to show that every maximizing critical point $\alpha_0\in\R$ satisfies $Q(\alpha_0)^T D\. Q(\alpha_0)$ is diagonal.
	Now, for any diagonal matrix $\Dhat = \diag(\dhat_1,\dhat_2)$, we find
	\begin{align}
		Q(\alpha)^T \Dhat\. Q(\alpha) &= \matr{\cos(\alpha)&\sin(\alpha)\\ -\sin(\alpha)&\cos(\alpha)} \matr{\dhat_1&0\\0&\dhat_2} \matr{\cos(\alpha)&-\sin(\alpha)\\ \sin(\alpha)&\cos(\alpha)}\notag\\
		&= \matr{\cos(\alpha)&-\sin(\alpha)\\ \sin(\alpha)&\cos(\alpha)} \matr{\dhat_1\.\cos(\alpha)&-\dhat_1\.\sin(\alpha)\\ \dhat_2\.\sin(\alpha)&\dhat_2\.\cos(\alpha)}\\
		&=
		\matr{
			\dhat_1\.\cos^2(\alpha) + \dhat_2\.\sin^2(\alpha) & (\dhat_2-\dhat_1)\.\sin(\alpha)\.\cos(\alpha)\\
			(\dhat_2-\dhat_1)\.\sin(\alpha)\.\cos(\alpha) & \dhat_1\.\sin^2(\alpha) + \dhat_2\.\cos^2(\alpha)\notag
		}\,.
	\end{align}
	Thus for $\Dtilde=\diag(\dtilde_1,\dtilde_2)$,
	\begin{align}
		\iprod{\Dtilde, Q(\alpha)^T \Dhat\. Q(\alpha)}
		&=
		\bigg\langle \matr{\dtilde_1&0\\0&\dtilde_2},\;\; 
		\matr{
			\dhat_1\.\cos^2(\alpha) + \dhat_2\.\sin^2(\alpha) & (\dhat_2-\dhat_1)\.\sin(\alpha)\.\cos(\alpha)\\
			(\dhat_2-\dhat_1)\.\sin(\alpha)\.\cos(\alpha) & \dhat_1\.\sin^2(\alpha) + \dhat_2\.\cos^2(\alpha)
		} \bigg\rangle\notag\\[.49em]
		&= \dtilde_1\left(\dhat_1\.\cos^2(\alpha) + \dhat_2\.\sin^2(\alpha)) + \dtilde_2\cdot (\dhat_1\.\sin^2(\alpha) + \dhat_2\.\cos^2(\alpha)\right)\\
		&= (\dtilde_1\.\dhat_1+\dtilde_2\.\dhat_2)\.\cos^2(\alpha) + (\dtilde_1\.\dhat_2+\dtilde_2\.\dhat_1)\.\sin^2(\alpha)\notag
	\end{align}
	and
	\begin{align}
		\iprod{A, Q(\alpha)^T B\. Q(\alpha)} + \iprod{C, Q(\alpha)^T D\. Q(\alpha)}
		&= (A_{11}\.B_{11}+A_{22}\.B_{22})\.\cos^2(\alpha) + (A_{11}\.B_{22}+A_{22}\.B_{11})\.\sin^2(\alpha)\\
		&\quad + (C_{11}\.D_{11}+C_{22}\.D_{22})\.\cos^2(\alpha) + (C_{11}\.D_{22}+C_{22}\.D_{11})\.\sin^2(\alpha)\,.\notag
	\end{align}
	Since, therefore,
	\begin{align}
		&\dd{\alpha}\; \iprod{A, Q(\alpha)^T B\. Q(\alpha)} + \iprod{C, Q(\alpha)^T D\. Q(\alpha)}\\
		&= \underbrace{2\.\sin(\alpha)\.\cos(\alpha)}_{=\sin(2\.\alpha)} \cdot \big(
		-A_{11}\.B_{11}-A_{22}\.B_{22} + A_{11}\.B_{22}+A_{22}\.B_{11} - C_{11}\.D_{11}-C_{22}\.D_{22} + C_{11}\.D_{22}+C_{22}\.D_{11}
		\big)\,,\notag
	\end{align}
	the critical points of the mapping $\alpha\mapsto \iprod{A, Q(\alpha)^T B\. Q(\alpha)} + \iprod{C, Q(\alpha)^T D\. Q(\alpha)}$ only occur at multiples of $\frac\pi2$ (unless the mapping is constant, in which case we can simply choose $\Qhat=\id$ as a maximizer). It is easy to see that for $\alpha_0=\frac{k\.\pi}{2}$ with an integer $k$, the matrix $Q(\alpha)$ is a signed permutation matrix, and thus $Q(\alpha_0)^T D\. Q(\alpha_0)$ is diagonal according to Lemma \ref{lemma:permutationMatrixPreservesDiagonalForm}.
	
%
%
\subsection{Linear elasticity as a simple example for the connection between $\sigma$ and $\sigmahat$}\label{appendix:linearElasticity}
	We want to point out the correlation between vector and matrix functions
	\begin{align}
		\sigma\col\Sym(3)\to\Sym(3)\,,\qquad\qquad\sigmahat\col\R^3\to\R^3
	\end{align}
	for linear elasticity
	\begin{align}
		W_{\rm lin}(\eps)=\mu\.\norm{\eps}^2+\frac{\lambda}{2}\tr(\eps)^2\qquad\text{with}\quad\eps=\sym(F-\id)=\sym\D u\,.
	\end{align}
	In linear elasticity, the Cauchy stress tensor $\sigma$ is defined as
	\begin{align}
		\sigma(\eps)=\D_\eps W_{\rm lin}(\eps)=2\mu\.\eps+\lambda\.\tr(\eps)\.\id\,.
	\end{align}
	The symmetric matrix $\eps\in\Sym(3)$ is diagonalizable as $\eps=Q^T\diag(\eps_1,\eps_2,\eps_3)\.Q$ for $Q\in \SO(3)$ and its eigenvalues are the principal strains $\eps_1,\eps_2,\eps_3\in\R$.
	Since by isotropy
	\begin{align}
		\sigma(Q^T\diag(\eps_1,\eps_2,\eps_3)\.Q)=Q^T\sigma(\diag(\eps_1,\eps_2,\eps_3))\.Q\,,
	\end{align}
	we can define the vector function
	\begin{align}
		\sigmahat(\eps_1,\eps_2,\eps_3)=\diag^{-1}(\sigma(\diag(\eps_1,\eps_2,\eps_3)))
		\qquad\text{with}\qquad
		\diag^{-1}\matr{\eps_1&0&0\\0&\eps_2&0\\0&0&\eps_3}=\matr{\eps_1\\\eps_2\\\eps_3}.
	\end{align}
	Thus we get
	\begin{align}
		\sigma(\diag(\eps_1,\eps_2,\eps_3))&=2\mu\.\matr{\eps_1&0&0\\0&\eps_2&0\\0&0&\eps_3}+\lambda\.(\eps_1+\eps_2+\eps_3)\matr{1&0&0\\0&1&0\\0&0&1},\notag\\
		\sigmahat(\eps_1,\eps_2,\eps_3)&=2\mu\.\matr{\eps_1\\\eps_2\\\eps_3}+\lambda\matr{\eps_1+\eps_2+\eps_3\\\eps_1+\eps_2+\eps_3\\\eps_1+\eps_2+\eps_3}=\matr{2\mu+\lambda&\lambda&\lambda\\\lambda&2\mu+\lambda&\lambda\\\lambda&\lambda&2\mu+\lambda}\matr{\eps_1\\\eps_2\\\eps_3}.
	\end{align}
	The monotonicity conditions for $\sigma$ and $\sigmahat$ are
	\begin{align}
			\iprod{\sigma(\eps)-\sigma(\overline{\eps}),\, \eps-\overline{\eps}}\geq 0
			\qquad\text{and}\qquad
		\iprod{\sigmahat(\eps_1,\eps_2,\eps_3)-\sigmahat(\overline{\eps}_1,\overline{\eps}_2,\overline{\eps}_3),\, \matr{\eps_1\\\eps_2\\\eps_3}-\matr{\overline{\eps}_1\\\overline{\eps}_2\\\overline{\eps}_3}} \geq 0
	\end{align}
	and they are equivalent because of Theorem \ref{theorem:mainResult}. Let us motivate this theorem by computing both monotonicity conditions directly. Since $\sigma$ is linear, vector-monotonicity is equivalent to
	\begin{align}
		\iprod{\sigmahat(\eps_1,\eps_2,\eps_3)-\sigmahat(\overline{\eps}_1,\overline{\eps}_2,\overline{\eps}_3),\, \matr{\eps_1\\\eps_2\\\eps_3}-\matr{\overline{\eps}_1\\\overline{\eps}_2\\\overline{\eps}_3}} &=  \iprod{\D\sigmahat\left[\matr{\eps_1\\\eps_2\\\eps_3}-\matr{\overline{\eps}_1\\\overline{\eps}_2\\\overline{\eps}_3}\right],\, \matr{\eps_1\\\eps_2\\\eps_3}-\matr{\overline{\eps}_1\\\overline{\eps}_2\\\overline{\eps}_3}}\notag\\
		&=\iprod{\sym \D\sigmahat\matr{\eps_1-\overline{\eps}_1\\\eps_2-\overline{\eps}_2\\\eps_3-\overline{\eps}_3},\, \matr{\eps_1-\overline{\eps}_1\\\eps_2-\overline{\eps}_2\\\eps_3-\overline{\eps}_3}}\geq 0\,.
	\end{align}		
	We get
	\begin{align}
		\matr{2\mu+\lambda&\lambda&\lambda\\\lambda&2\mu+\lambda&\lambda\\\lambda&\lambda&2\mu+\lambda}=\D\sigmahat=\sym \D\sigmahat\,,
	\end{align}
	with eigenvalues $2\mu$, $2\mu$ and $2\mu+3\lambda$. We want to ensure (semi-) positive definiteness of $\D\sigmahat$. Therefore, $\sigmahat$ is vector-monotone if and only if $\mu\geq 0$ and $2\mu+3\lambda\geq 0$. On the other hand, matrix-monotonicity is equivalent to
	\begin{align}
		\iprod{\sigma(\eps)-\sigma(\overline{\eps}),\, \eps-\overline{\eps}}
		&=\iprod{2\mu\.\eps+\lambda\tr(\eps)\.\id-(2\mu\.\overline{\eps}+\lambda\tr(\overline{\eps})\.\id),\, \eps-\overline{\eps}}\notag\\
		&=\iprod{2\mu\.(\eps-\overline{\eps})+\lambda\tr(\eps-\overline{\eps})\.\id,\, \eps-\overline{\eps}}\notag\\
		&=2\mu\.\norm{\eps-\overline{\eps}}^2+\lambda\.[\tr(\eps-\overline{\eps})]^2\geq 0\,.
	\end{align}
	We introduce 
	\begin{align}
		\norm{X}^2=\norm{\dev(X)+\frac{1}{3}\tr(X)\id}^2=\norm{\dev(X)}^2+\frac{1}{3}\.[\tr(X)]^2
	\end{align}
	and transform matrix-monotonicity further to
	\begin{align}
		\iprod{\sigma(\eps)-\sigma(\overline{\eps}),\, \eps-\overline{\eps}}
		&=2\mu\left(\norm{\dev(\eps-\overline{\eps})}^2+\frac{1}{3}\.[\tr(\eps-\overline{\eps})]^2\right)+\lambda\.[\tr(\eps-\overline{\eps})]^2\notag\\
		&=2\mu\.\norm{\dev(\eps-\overline{\eps})}^2+\frac{2\mu+3\lambda}{3}\.[\tr(\eps-\overline{\eps})]^2\geq 0\,.
	\end{align}
	Therefore, $\sigma$ is matrix-monotone if and only if $\mu\geq 0$ and $2\mu+3\lambda\geq 0$. We obtain the same condition for vector- and matrix-monotonicity, as expected. Strict vector- and matrix-monotonicity are equivalent to $\mu>0$ and $2\mu+3\lambda>0$.
	
%
%
\subsection{Matrix-monotonicity of $\sigma$ in $\log V$ through $\text{TSTS}^*\text{-M}^+$}\label{appendix:examples}
	In the context of Section \ref{sec:motivation}, we want to check for the important stability inequality for the Cauchy stress tensor $\sigma$ as a function of $\log V$ ($\text{TSTS-M}^+$), i.e.\ strict matrix-monotonicity in the strain tensor $\log V$
	\begin{align}
		\iprod{\sigma(\log(V))-\sigma(\log(\overline V)),\,\log(V)-\log(\overline V)}>0\,,\qquad\forall\;V,\overline V\in\Symp(3),\quad V\neq \overline V\,.
	\end{align}
	With our main Theorem \ref{theorem:mainResult}, we are able to give an alternate proof based on showing strict vector-monotonicity $\text{TSTS}^*\text{-M}^+$ inequality regarding $\sigmahat$
	\begin{align}
		\iprod{\sigmahat(\log\lambda_1,\log\lambda_2,\log\lambda_3)-\sigmahat(\log\overline{\lambda}_1,\log\overline{\lambda}_2,\log\overline{\lambda}_3),\, \left(\begin{array}{c}\log\lambda_1\\\log\lambda_2\\\log\lambda_3\end{array}\right)-\left(\begin{array}{c}\log\overline{\lambda}_1\\\log\overline{\lambda}_2\\\log\overline{\lambda}_3\end{array}\right)} >0\,,\quad\forall\matr{\lambda_1\\\lambda_2\\\lambda_3}\neq\matr{\overline\lambda_1\\\overline\lambda_2\\\overline\lambda_3}.
	\end{align}
	For this, we will first compute the Kirchhoff stress tensor
	\begin{align}
		\D_{\log V}W(\log V)=\tau=(\det V)\.\sigma
		\qquad\iff\qquad
		\sigma(\log V)=\left(\det V\right)^{-1}\D_{\log V}W(\log V)\label{eq:KirchhoffStressTensor}
	\end{align}
	to identify $\sigma(\log V)$.  Due to isotropy\footnote{We apply $\log(Q^T\diag(\lambda_1,\lambda_2,\lambda_3)Q)=Q^T\log(\diag(\lambda_1,\lambda_2,\lambda_3))Q=Q^T\diag(\log\lambda_1,\log\lambda_2,\log\lambda_3)\.Q\,.$} and $V=Q^T\diag(\lambda_1,\lambda_2,\lambda_3)Q$ it holds
	\begin{align}
		\sigma\left(\log(Q^T\diag(\lambda_1,\lambda_2,\lambda_3)\.Q)\right)=\sigma\left(Q^T\diag(\log\lambda_1,\log\lambda_2,\log\lambda_3)\.Q\right)=Q^T\sigma(\diag(\log\lambda_1,&\log\lambda_2,\log\lambda_3))\.Q\,,\label{eq:isotropyLogV}
	\end{align}
	which allows us to identify the corresponding vector function $\sigmahat(\log\lambda_1,\log\lambda_2,\log\lambda_3)$.
	
\subsubsection{An exp-Hencky-type energy}\label{appendix:example1}
	It was shown in \citet[Section 4]{agn_neff2015exponentiatedI}, that the energy function of exp-Hencky-type
	\begin{align}
		W(F)=\frac{1}{2} e^{\norm{\log V}^2}=\frac{1}{2} e^{\frac{1}{4}\norm{\log B}^2}\,,\qquad F=V\. R\,,\quad R\in\SO(3)\,,\quad V\in\Symp(3)\label{eq:ExpHencky1}
	\end{align}
	satisfies the $\text{TSTS-M}^+$ inequality. We use \eqref{eq:KirchhoffStressTensor} to identify
	\begin{align}
		\sigma(\log V)&=\left(e^{\log(\det V)}\right)^{-1}\D_{\log V}\left[\frac{1}{2}e^{\norm{\log V}^2}\right]=e^{-\tr(\log V)}\log V\.e^{\norm{\log V}^2}=\log V\.e^{\norm{\log V}^2-\tr(\log V)}.
	\end{align}
	We have \eqref{eq:isotropyLogV} to identify
	\begin{align}
		\sigmahat(\log\lambda_1,\log\lambda_2,\log\lambda_3)=\matr{\log\lambda_1\\\log\lambda_2\\\log\lambda_3}e^{\log^2(\lambda_1)+\log^2(\lambda_2)+\log^2(\lambda_3)-\log\lambda_1-\log\lambda_2-\log\lambda_3}.
	\end{align}
	The principal Cauchy stresses $\sigmahat$ satisfy $\text{TSTS}^*\text{-M}^+$, if $\sym\D\sigmahat\in\Symp(3)\,.$ The calculated eigenvalues of $\sym \D\sigmahat(x_1,x_2,x_3)$ are complicated terms
	\begin{align}	
		\matr{e^{(x_1-1) x_1+(x_2-1) x_2+(x_3-1) x_3}\\\frac{1}{2} e^{(x_1-1)
   x_1+(x_2-1) x_2+(x_3-1) x_3} \left(x_1 (2 x_1-1)+x_2 (2 x_2-1)+x_3 (2
   x_3-1)+2-\xi(x_1,x_2,x_3)\right)\\\frac{1}{2} e^{(x_1-1) x_1+(x_2-1) x_2+(x_3-1) x_3}
   \left(x_1 (2 x_1-1)+x_2 (2 x_2-1)+x_3 (2
   x_3-1)+2+\xi(x_1,x_2,x_3)\right)}\\
   		\text{with}\qquad\xi(x_1,x_2,x_3)=\sqrt{(4 (x_1-1) x_1+4 (x_2-1) x_2+4
   (x_3-1) x_3+3) \left(x_1^2+x_2^2+x_3^2\right)}\,.\notag
   	\end{align}
   	We introduce $s\colonequals x_1+x_2+x_3$ and $u\colonequals x_1^2+x_2^2+x_3^2$ and  check for the positivity of
   	\begin{equation}
		2\.u-s+2\pm\xi\,,\qquad\xi=\sqrt{(4\.u-4\.s+3)\.u}\,.
   	\end{equation}
   We compute
  	\begin{align}
  		(2\.u-s+2+\xi)(2\.u-s+2-\xi)&=(2\.u-s+2)^2-\xi^2=4\.u^2+s^2-4\.s\.u+8\.u-4\.s+4-(4\.u^2-4\.s\.u+3\.u)\notag\\
  		&=s^2+5\.u-4\.s+4=(s-2)^2+5\.u>0\,.
  	\end{align}
	Therefore, the product of both eigenvalues is always positive and it remains to show that
	\begin{equation}
		2\.u-s+2\geq2\.u-\sqrt{3\.u}+2\geq2\.u-2\.\sqrt{u}+2=u+1+(\sqrt{u}-1)^2>0\,,
	\end{equation}
	which holds because $u\geq0$ and the Cauchy-Schwarz inequality
	\begin{equation}
		s^2\leq\norm{(1,1,1)}^2\norm{(x_1,x_2,x_3)}^2=3\.u\,.
	\end{equation}
   	Therefore, $\sigmahat$ satisfies $\text{TSTS}^*\text{-M}^+$ and due to our main Theorem \ref{theorem:mainResult} the Cauchy stress tensor $\sigma$ satisfies the $\text{TSTS-M}^+$ condition everywhere.
	 
\subsubsection{A compressible Neo-Hooke-type energy}\label{appendix:example2}
	 A compressible Neo-Hooke energy with volumetric-isochoric split is given by
	 \begin{align}
	 	W(I_1,I_2,I_3)=\mu\.(I_1I_3^{-\frac{1}{3}}-3)+\frac{\kappa}{2}e^{\frac14\.[\log(I_3)]^2}=\mu\.\frac{\norm{F}^2}{(\det F)^\frac23}+\frac{\kappa}{2}e^{[\log(\det V)]^2}-3\.\mu
	 \end{align}
	 with $I_1,I_2,I_3$ the principal matrix invariants from $B=V^2$ and positive constants $\mu,\kappa\in\R^+$ (the shear and bulk modulus, respectively). In terms of eigenvalues of $V$ we get
	 \begin{align}
	 	W(\lambda_1,\lambda_2,\lambda_3)&=\mu\left(\frac{\lambda_1^2+\lambda_2^2+\lambda_3^2}{\sqrt[3]{\lambda_1^2\lambda_2^2\lambda_3^2}}-3\right)+\frac{\kappa}{2}e^{\log^2(\lambda_1\lambda_2\lambda_3)}\notag\\
	 	&=\mu\left(\left(e^{2\.\log(\lambda_1)}+e^{2\.\log(\lambda_2)}+e^{2\.\log(\lambda_3)}\right)\left(e^{\log(\lambda_1\lambda_2\lambda_3)}\right)^{-\frac{2}{3}}-3\right)+\frac{\kappa}{2}e^{(\log(\lambda_1)+\log(\lambda_2)+\log(\lambda_3))^2}\notag\\
	 	&=\mu\left(\tr(\exp(2\log V))e^{-\frac{2}{3}\tr(\log V)}-3\right)+\frac{\kappa}{2}e^{[\tr(\log V)]^2}=\widetilde W(\log V)\,.
	 \end{align}	 
	 We utilize \eqref{eq:KirchhoffStressTensor} to identify
	\begin{align}
	\sigma(\log V)=\ &\left(e^{\log(\det V)}\right)^{-1}\D_{\log V}\left[\mu\left(\tr(\exp(2\log V))e^{-\frac{2}{3}\tr(\log V)}-3\right)+\frac{\kappa}{2}e^{\tr(\log V)^2}\right]\notag\\
		=\ &e^{-\tr(\log V)}\left[\mu\left(2\exp(2\log V)e^{-\frac{2}{3}\tr(\log V)}+\tr(\exp(2\log V))\left(-\frac{2}{3}\id\right)e^{-\frac{2}{3}\tr(\log V)}\right)\right.\notag\\&+\left.\kappa\tr(\log V)\.\id\.e^{\tr(\log V)^2}\right]\notag\\
		=\ &e^{-\tr(\log V)}\left[2\mu\left(\exp(2\log V)-\frac{1}{3}\tr(\exp(2\log V))\id\right)e^{-\frac{2}{3}\tr(\log V)}+\kappa\tr(\log V)\.\id\.e^{\tr(\log V)^2}\right]\notag\\
		=\ &2\mu\dev(\exp(2\log V))e^{-\frac{5}{3}\tr(\log V)}+\kappa\tr(\log V)\.\id\.e^{\tr(\log V)^2-\tr(\log V)}.
	\end{align}
	We have \eqref{eq:isotropyLogV} to identify
	\begin{align}
		\sigmahat(\log\lambda_1,\log\lambda_2,\log\lambda_3)=\ &2\mu\matr{e^{2\log\lambda_1}-\frac{1}{3}\left(e^{2\log\lambda_1}+e^{2\log\lambda_2}+e^{2\log\lambda_3}\right)\\e^{2\log\lambda_2}-\frac{1}{3}\left(e^{2\log\lambda_1}+e^{2\log\lambda_2}+e^{2\log\lambda_3}\right)\\e^{2\log\lambda_3}-\frac{1}{3}\left(e^{2\log\lambda_1}+e^{2\log\lambda_2}+e^{2\log\lambda_3}\right)}e^{-\frac{5}{3}(\log\lambda_1+\log\lambda_2+\log\lambda_3)}\notag\\
		&+\matr{\kappa\\\kappa\\\kappa}(\log\lambda_1+\log\lambda_2+\log\lambda_3)\.e^{(\log\lambda_1+\log\lambda_2+\log\lambda_3)^2-\log\lambda_1-\log\lambda_2-\log\lambda_3}.
	\end{align}
	The principal Cauchy stresses $\sigmahat$ satisfy $\text{TSTS}^*\text{-M}^+$, if $\sym\D\sigmahat\in\Symp(3)\,.$ The material parameters $\kappa,\mu\in\R^+$ decide the domain of $\lambda\in\R^3$ where all three eigenvalues of $\sym\D\sigmahat(x_1,x_2,x_3)$ are positive. 
	 
 \subsubsection{A compressible Blatz-Ko energy}\label{appendix:example3} 
	The compressible Blatz-Ko energy is another generalization of the incompressible Neo-Hooke energy without volumetric-isochoric split. It is given by
	 \begin{align}
	 	W(I_1,I_2,I_3)=\frac{\mu}{2}\left(I_1+\frac{2}{\sqrt{I_3}}-5\right).
	 \end{align}
	 with $I_1,I_2,I_3$ the principal matrix invariants from $B=V^2$ and a positive material constant $\mu\in\R^+$. In terms of eigenvalues of $V$ we get
	 \begin{align}
	 	W(\lambda_1,\lambda_2,\lambda_3)&=\frac{\mu}{2}\left(\lambda_1^2+\lambda_2^2+\lambda_3^2+\frac{2}{\lambda_1\lambda_2\lambda_3}-5\right)\notag\\
	 	&=\frac{\mu}{2}\left(\left(e^{2\.\log(\lambda_1)}+e^{2\.\log(\lambda_2)}+e^{2\.\log(\lambda_3)}\right)+2\left(e^{\log(\lambda_1\lambda_2\lambda_3)}\right)^{-1}-5\right)\notag\\
	 	&=\frac{\mu}{2}\left(\tr(\exp(2\log V))+2\.e^{-\tr(\log V)}-5\right)=\widetilde W(\log V)\,.
	 \end{align}	 
	 We utilize \eqref{eq:KirchhoffStressTensor} to identify
	\begin{align}
	\sigma(\log V)&=\left(e^{\log(\det V)}\right)^{-1}\D_{\log V}\left[\frac{\mu}{2}\left(\tr(\exp(2\log V))+2\.e^{-\tr(\log V)}-5\right)\right]\notag\\
		&=e^{-\tr(\log V)}\left[\frac{\mu}{2}\left(2\exp(2\log V)-2\.\id\.e^{-\tr(\log V)}\right)\right]\notag\\
		&=\mu\exp(2\log V)e^{-\tr(\log V)}-\mu\.\id\.e^{-2\tr(\log V)}.
	\end{align}
	We have \eqref{eq:isotropyLogV} to identify
	\begin{align}
		\sigmahat(\log\lambda_1,\log\lambda_2,\log\lambda_3)=\mu\matr{e^{2\log\lambda_1}\\e^{2\log\lambda_2}\\e^{2\log\lambda_3}}e^{-(\log\lambda_1+\log\lambda_2+\log\lambda_3)}-\mu\.\matr{1\\1\\1}e^{-2(\log\lambda_1+\log\lambda_2+\log\lambda_3)}.
	\end{align}
	The principal Cauchy stresses $\sigmahat$ satisfy $\text{TSTS}^*\text{-M}^+$, if $\sym\D\sigmahat\in\Symp(3)\,.$ However with analytic calculations this condition is never satisfied, i.e.\ the second principal minor of $\sym\D\sigmahat$
	\begin{align}
		-\frac{\mu^2}{4}e^{-3(x_1+x_2+x_3)}\left(e^{x_1+x_2+x_3}\left(e^{2x_1}-e^{2x_2}\right)^2+16\left(e^{2x_1}+e^{2x_2}\right)\right)
	\end{align}
	is always smaller than zero for $\mu>0$. Thus $\sigmahat$ does not satisfy $\text{TSTS}^*\text{-M}^+$ and due to our main Theorem \ref{theorem:mainResult} the tensor $\sigma$ does not satisfy the $\text{TSTS-M}^+$ condition.
	 
 \subsubsection{The exp-Hencky energy with volumetric-isochoric split}\label{appendix:example4} 
	We want to test the exp-Hencky energy from \citet[Section 4]{agn_neff2015exponentiatedI}
	\begin{align}
		W(F)=\mu\.e^{\norm{\dev\log V}^2}+\frac{\kappa}{2}\.e^{[\tr\log V]^2}\,,\qquad F=V\. R\,,\quad R\in\SO(3)\,,\quad V\in\Symp(3)\label{eq:ExpHencky}
	\end{align}
	for the $\text{TSTS-M}^+$ inequality for the Cauchy stress tensor $\sigma$ as a function of $\log V\,.$
	We utilize \eqref{eq:KirchhoffStressTensor} to identify\footnote{Because of the chain rule and $\dev X$ being linear, we have $\D\left[\norm{\dev X}^2\right]=2\dev X$.}
	\begin{align}
		\sigma(\log V)&=\left(e^{\log(\det V)}\right)^{-1}\D_{\log V}\left[\mu\.e^{\norm{\dev\log V}^2}+\frac{\kappa}{2}\.e^{[\tr\log V]^2}\right]\notag\\
		&=e^{-\tr\log V}\left(2\mu\dev\log V\.e^{\norm{\dev\log V}^2}+\kappa\tr(\log V)\.\id\.e^{[\tr\log V]^2}\right)\notag\\
		&=2\mu\dev\log V\.e^{\norm{\dev\log V}^2-\tr\log V}+\kappa\tr(\log V)\.\id\.e^{[\tr\log V]^2-\tr\log V}.
	\end{align}
	We have \eqref{eq:isotropyLogV} to identify
	\begin{align}
		\sigmahat(\log\lambda_1,\log\lambda_2,\log\lambda_3)=\matr{\kappa\\\kappa\\\kappa}(\log\lambda_1&+\log\lambda_2+\log\lambda_3)\.e^{(\log\lambda_1+\log\lambda_2+\log\lambda_3)^2-\log\lambda_1-\log\lambda_2-\log\lambda_3}\\
		+2\mu\matr{\log\lambda_1-\frac{1}{3}\left(\log\lambda_1+\log\lambda_2+\log\lambda_3\right)\\\log\lambda_2-\frac{1}{3}\left(\log\lambda_1+\log\lambda_2+\log\lambda_3\right)\\\log\lambda_3-\frac{1}{3}\left(\log\lambda_1+\log\lambda_2+\log\lambda_3\right)}&e^{\frac{2}{3}\left(\log^2(\lambda_1)+\log^2(\lambda_2)+\log^2(\lambda_3)-\log\lambda_1\log\lambda_2-\log\lambda_2\log\lambda_3-\log\lambda_3\log\lambda_1\right)-\log\lambda_1-\log\lambda_2-\log\lambda_3}.\notag
	\end{align} 
	The principal Cauchy stresses $\sigmahat$ satisfy $\text{TSTS}^*\text{-M}^+$, if $\sym\D\sigmahat\in\Symp(3)\,.$
	The material parameters $\kappa,\mu\in\R^+$ decide the domain of $\lambda\in\R^3$ where all three eigenvalues of $\sym\D\sigmahat(x_1,x_2,x_3)$ are positive. This region includes a large neighbourhood of the identity but is not everywhere satisfied.
		
\subsection{Rivin's short proof of a Golden-Thompson-type inequality}\label{appendix:RivinGoldenThompson}
	The convexity of $f\col\Sym(n)\to\R,\quad f(X)=\log\tr(\exp(X))$ implies 
	\begin{align}
		&&f(\lambda\.X+(1-\lambda)\.Y)&\leq\lambda\.f(X)+(1-\lambda)\.f(Y)\notag\\
		&\iff&\qquad \log\tr(\exp(\lambda\.X+(1-\lambda)\.Y))&\leq\lambda\log\tr(\exp X)+(1-\lambda)\log\tr(\exp Y)\notag\\
		&\iff&\qquad \tr(\exp(\lambda\.X+(1-\lambda)\.Y))&\leq[\tr(\exp X)]^\lambda\.[\tr(\exp Y)]^{(1-\lambda)}\overset{\star}{\leq}\tr\left([\exp X]^\lambda\right)\.\tr\left([\exp Y]^{(1-\lambda)}\right)\notag\\
		&\;\,\Longrightarrow&\qquad \tr(\exp(\lambda\.X+(1-\lambda)\.Y))&\leq\tr(\exp(\lambda X))\.\tr(\exp((1-\lambda)Y))\notag\\
		&\iff&\qquad \tr(\exp(A+B))&\leq\tr(\exp(A))\.\tr(\exp(B))\,.
	\end{align}
	To see inequality $\star$ we set
	\begin{align}
		\lambda X=A\,,\quad(1-\lambda)\.Y=B\qquad\iff\qquad X=\frac{1}{\lambda}\.A\,,\quad Y=\frac{1}{1-\lambda}\.B\,.
	\end{align}
	Now we set $\lambda=\frac{1}{2}$ with $X=2\.A$ and $Y=2\.B$, i.e.\ we only consider
	\begin{align}
		[\tr(\exp X)]^\lambda\.[\tr(\exp Y)]^{(1-\lambda)}&=[\tr(\exp \frac{1}{\lambda}A)]^\lambda\.[\tr(\exp \frac{1}{1-\lambda}B)]^{(1-\lambda)}=\sqrt{\tr(\exp 2A)}\.\sqrt{\tr(\exp 2B)}\,,\\
		\tr\left([\exp X]^\lambda\right)\.\tr\left([\exp Y]^{(1-\lambda)}\right)&=\tr\left([\exp \lambda\frac{1}{\lambda}A)]\right)\.\tr\left([\exp (1-\lambda)\frac{1}{1-\lambda}B)]\right)=\tr(\exp A)\.\tr(\exp B)\,.\notag
	\end{align}
	Therefore, we have to prove
	\begin{align}
		\sqrt{\tr(\exp 2A)}\.\sqrt{\tr(\exp 2B)}&\leq\tr(\exp A)\.\tr(\exp B)\notag\\
		\sqrt{\left(e^{a_1}\right)^2+\dotsc+\left(e^{a_n}\right)^2}\,\sqrt{\left(e^{b_1}\right)^2+\dotsc+\left(e^{b_n}\right)^2}&\leq\left(e^{a_1}+\dotsc+e^{a_n}\right)\left(e^{b_1}+\dotsc+e^{b_n}\right)\\
		\left(\left(e^{a_1}\right)^2+\dotsc+\left(e^{a_n}\right)^2\right)\left(\left(e^{b_1}\right)^2+\dotsc+\left(e^{b_n}\right)^2\right)&\leq\left(e^{a_1}+\dotsc+e^{a_n}\right)^2\left(e^{b_1}+\dotsc+e^{b_n}\right)^2.\notag
	\end{align}
	This statement is true because
	\begin{align}
		\sum_{i=1}^nc_i^2\leq\left(\sum_{i=1}^nc_i\right)^2=\sum_{i=1}^nc_i^2+2\.(\underbrace{c_1c_2+c_1c_3+\dotsc+c_{n-1}c_n}_{>0 \text{ because } c_i>0})\,.
	\end{align}
	It only remains to show the convexity of the mapping
	\begin{align}
		f\col X\to\log\tr(\exp X)=\log\sum_{i=1}^ne^{\lambda_i}=\fpot(\lambda_1,\dotsc\lambda_n)\,.
	\end{align}
	In the symmetric eigenvalue representation $\fpot(\lambda_1,\dotsc\lambda_n)$ the Hessian matrix is given by
	\begin{align}
		\D\.\grad\fpot(\lambda_1,\dotsc,\lambda_n)=\frac{1}{\left(\sum_{i=1}^ne^{\lambda_i}\right)^{2}}\matr{\sum_{j\neq 1} e^{\lambda_1+\lambda_j} & -e^{\lambda_1+\lambda_2}& -e^{\lambda_1+\lambda_3}&\dotsc\\
			-e^{\lambda_2+\lambda_1}&\sum_{j\neq 2} e^{\lambda_2+\lambda_j}&-e^{\lambda_2+\lambda_3}&\dotsc\\
			-e^{\lambda_3+\lambda_1}& -e^{\lambda_3+\lambda_2}&\sum_{j\neq 3} e^{\lambda_3+\lambda_j} & \dotsc\\
			\vdots&\vdots&\vdots&\ddots}.
	\end{align}
	It is easy to see \cite{rivin2010golden} that the diagonal entries are always the sum of the absolute values of the off-diagonal entries in that row. Therefore, Gershgorin's circle theorem \cite{gerschgorin1931sa} yields that all eigenvalues of $\D\.\grad\fpot$ are non-negative, which implies positive semi-definiteness of $\D\.\grad\fpot$. This shows convexity of $\fpot$ and the Chandler Davis theorem implies that $X\to\log\tr(\exp X)$ is convex.
\end{appendix}
\end{document}